\newcommand{\ii}{\mathrm i}
\newcommand{\Cone}{C_{\delta_1}}
\newcommand{\CUone}{\breve{C}_{\delta_1}}
\newcommand{\Ctwo}{\widetilde{C}_{\delta_1}}
\newcommand{\cK}{{\mathcal K}}  % calligraphic K
\DeclareMathOperator{\Id}{Id}
\DeclareMathOperator{\sgn}{sgn}
\newcommand{\abs}[1]{|#1|}
\newcommand{\norm}[1]{\| {#1} \|}
\newcommand{\Norm}[1]{\bigg\| {#1} \bigg\|}
\newcommand{\bigPar}[1]{\big(#1\big)}
\newcommand{\BiggPar}[1]{\Bigg(#1\Bigg)}
\newcommand{\operator}[1]{\mathcal{#1}}
\newcommand{\Caputo}[1][\gamma]{\mathcal{D}^{#1}}
\newcommand{\Fourier}{\operator{F}}
\newcommand{\FourierInv}{\Fourier^{-1}}
\newcommand{\rd}{\rho_\delta}
\newcommand{\sd}{s_\delta}
\newcommand{\sdd}{s'_\delta}
\newcommand{\Sd}{S_\delta}
\newcommand{\profile}{\overline u}
\newcommand{\um}{u_-}
\newcommand{\up}{u_+}
\newcommand{\upm}{u_{\pm}}
\newcommand{\wavespeed}{s}
\newcommand{\ShockTriple}{(\um,\up;\wavespeed)}
\newcommand{\Riesz}{D_0^\alpha}  % Riesz operator
\newcommand{\RieszFeller}{D_\theta^\alpha}  % Riesz-Feller operator
\newcommand{\Green}{G_\theta^\alpha}  % Green's function associated to Riesz-Feller operator
\newcommand{\RFsymbol}{\psi_\theta^\alpha}  % symbol of Riesz-Feller operator
\newcommand{\SchwartzTF}{\mathcal{S}} % Schwartz test functions
\newcommand{\sdiff}[2]{\partial_{#2} {#1}} % [s]hort hand notation for partial [diff]erentiation
\newcommand{\sdifff}[3]{\partial^{#3}_{#2} {#1}}
\newcommand{\dd}[1][x]{\operatorname{d}\!#1}
\newcommand{\integral}[3]{\int_{#1} {#2} \dd[#3] }
\newcommand{\integrall}[4]{\int_{#1}^{#2} {#3} \dd[#4] }
\newcommand{\C}{\mathbb{C}}
\newcommand{\N}{\mathbb{N}}
\newcommand{\R}{\mathbb{R}}
\newcommand{\SG}{(S_t)_{t\geq 0}}
\newcommand{\Xx}[1]{\quad\text{ #1 }\,}
\newcommand{\XX}[1]{\quad\text{ #1 }\quad}
\theoremstyle{plain} %default
\newtheorem{theorem}{Theorem}%[section]
\newtheorem{lemma}{Lemma}
\newtheorem{proposition}{Proposition}
\theoremstyle{definition}
\newtheorem{remark}{Remark}
\begin{document}

\title{Asymptotic stability of traveling wave solutions for nonlocal viscous conservation laws with explicit decay rates}
%\titlerunning{Asymptotic stability of traveling wave solutions}        % if too long for running head

\author{Franz Achleitner\thanks{University of Vienna, Faculty of Mathematics, Oskar-Morgenstern-Platz 1, A-1090 Vienna, Austria, {franz.achleitner@univie.ac.at}} and
        Yoshihiro Ueda\thanks{Kobe University, Faculty of Maritime Sciences, 5-1-1 Fukaeminami-machi, Higashinada-ku, Kobe 658-0022, Japan, {ueda@maritime.kobe-u.ac.jp}}}

\maketitle

\begin{abstract}
We consider scalar conservation laws with nonlocal diffusion of Riesz-Feller type such as the fractal Burgers equation.
The existence of traveling wave solutions with monotone decreasing profile has been established recently (in special cases).
We show the local asymptotic stability of these traveling wave solutions in a Sobolev space setting by constructing a Lyapunov functional.
Most importantly, we derive the algebraic-in-time decay of the norm of such perturbations with explicit algebraic-in-time decay rates. 

\smallskip \noindent
\textbf{keywords:} nonlocal evolution equations, Riesz-Feller operator, fractional Laplacian, traveling wave solutions, asymptotic stability, decay rates

\smallskip \noindent
\textbf{MSC:} 47J35, 26A33, 35C07
\end{abstract}

\section{Introduction} \label{sec:intro}

We consider the evolution of a scalar quantity $u: \R\times (0,\infty) \to U\subset\R$, $(x,t) \mapsto u(x,t)$,
 which is governed by the Cauchy problem %equation
 %\begin{equation}\label{eq:FCL} % fractional conservation law
 	%\begin{split}
  	%\sdiff{u}{t} + \sdiff{}{x} f(u) &= \RieszFeller u \Xx{for} (x,t)\in\R\times (0,\infty), \\
   	%u(0,x) &= u_0(x) \Xx{for} x\in\R, 
	%\end{split}
 %\end{equation}
 \begin{align} % fractional conservation law
  	\sdiff{u}{t} + \sdiff{}{x} f(u) &= \RieszFeller u &&\text{for } (x,t)\in\R\times (0,\infty), \label{eq:FCL} \\
   	u(0,x) &= u_0(x) &&\text{for } x\in\R, \nonumber 
 \end{align}
with an initial datum $u_0: \R \to U \subset \R$, a flux function $f: U\subset\R \to \R$
and a Riesz-Feller operator~$\RieszFeller$ for some $1<\alpha \le 2$ and $\abs{\theta}\leq 2-\alpha$.
Equation~\eqref{eq:FCL} models nonlinear transport and nonlocal diffusion of a quantity~$u(x,t)$ in space over time.
%Here, the initial datum $u_0$ is assumed to satisfy % $u_0\in C(R)$ and
%\begin{equation}\label{endpoint}
	%\lim_{x \to \pm \infty} u_0(x) = u_{\pm}
%\end{equation}
%for given endstates $u_+$ and $u_-$ with $u_+ < u_-$.
The flux function~$f$ is assumed to be smooth and convex as well as to satisfy w.l.o.g. $f(0)=0$. 
% \begin{remark} \label{remark:PDO}
% The Fourier transform is defined as 
% \[
%   \Fourier \varphi(k) = \widehat \varphi (k) = \frac{1}{\sqrt{2\pi}} \int e^{-i k x}\varphi(x) }{x} \,.
% \]
% The non-local operator
% \begin{equation}
% \label{Dalpha}
%   (\Dalpha u)(x)= \frac{1}{\Gamma(1-\alpha)}\int_{-\infty}^x 
%   \frac{u'(y)}{(x-y)^{\alpha}}dy \,,
% \end{equation}
% satisfies $\Fourier(\Dalpha u)(k)=(ik)^\alpha \Fourier(u)$, $k\in\R$. Thus $\Dalpha$ is a pseudo-differential operator with symbol~$\Lambda(k)=(ik)^\alpha$, $k\in\R$. Consequently, the pseudo-differential operator $\dDalpha$ can be identified through its symbol with a Riesz-Feller operator which is a special class of L\'{e}vy operators~\cite{Feller:1971, Gorenflo+Mainardi:1998}.  
% \end{remark}
%\begin{remark} \label{remark:PDO}
%{\color{red} Here,} 
The Riesz-Feller operator can be defined as a Fourier multiplier operator,
 see also~\cite{Mainardi+etal:2001}.
 Precisely, the Riesz-Feller operator~$\RieszFeller$ of order~$\alpha$ and skewness~$\theta$ is defined as
 \begin{equation} \label{eq:RF:Fourier}
  \Fourier [\RieszFeller v](k) = \RFsymbol (k) \Fourier [v] (k) \,,\qquad  k \in\R \,,
 \end{equation}
 with symbol
 \begin{equation} \label{eq:RF:symbol}
  \RFsymbol(k) = -|k|^\alpha \exp\left( \ii\, \sgn(k)\, \theta \tfrac{\pi}{2}\right) 
    = -|k|^\alpha \left( \cos( \theta \tfrac{\pi}{2} ) + \ii\, \sgn(k)\, \sin(\theta\tfrac{\pi}{2}) \right)
 \end{equation}
 and parameters $0<\alpha\leq 2$ and $|\theta| \leq \min\{\alpha, 2-\alpha\}$,
  where $\mathcal{F}$ denotes the Fourier transform.

\medskip
\begin{remark}
% The symbol $\psi^\alpha_\theta (\xi)$ is the logarithm of the characteristic function
%  of a L\'{e}vy strictly stable probability density with index of stability~$\alpha$ and asymmetry parameter~$\theta$
%  according to Feller's parameterization \cite{Feller2,GorenfloMainardi};
%  see also Section \ref{ssec:linear}.
%
%Special cases of Riesz-Feller operators are
% \begin{enumerate}
% \item
(i) Riesz-Feller operators $\RieszFeller$ with $\theta = 0$ %in \eqref{eq:RF:Fourier}, 
 are also known as fractional Laplacians $\Riesz=-(-\sdifff{u}{x}{2})^{\alpha/2}$
 with $0< \alpha \leq 2$ and Fourier symbol $-|k|^\alpha$.
In particular, the Laplacian $D_0^2=\sdifff{}{x}{2}$ is a special case with $\alpha=2$ and $\theta=0$.
%\item

(ii) For $0 < \gamma <1$, Riesz-Feller operators $\RieszFeller$  with $\alpha = \gamma$ and $\theta = -\gamma$,
 can be identified with fractional Caputo derivatives of order $0<\gamma<1$: %are described as 
 \begin{equation} \label{Caputo}
  -(\Caputo u)(x)= -\frac{1}{\Gamma(1-\gamma)} \integrall{-\infty}{x}{ \frac{u'(y)}{(x-y)^{\gamma}} }{y} \Xx{for} x\in\R \,,
 \end{equation}
 which have Fourier symbol $-(-\ii k)^\gamma$.
% with a Fourier representation $\Fourier[\Caputo u](\xi)=(i\xi)^\gamma \Fourier u$ for $\xi\in\R$.
 The symbol $(-\ii k )^\gamma$ is multi-valued, however (only) the choice
  $(-\ii k)^\gamma = \left(|k|\exp(-\ii\sgn(k)\, \tfrac\pi2)\right)^\gamma=|k|^\gamma \exp(-\ii\sgn(k)\, \gamma\tfrac\pi2)$
  yields a causal operator. For details, see \cite{Kempfle+etal:2002}.
 Moreover, its derivative $\sdiff{}{x} (\Caputo u)$ is a Riesz-Feller operator with $\alpha = 1+\gamma$ and $\theta = 2 -\alpha$.
%\end{enumerate}
\end{remark}

Taking $\alpha = 2$ and $\theta = 0$ in \eqref{eq:FCL}, we formally obtain a classical viscous conservation law:
\begin{equation} \label{eq:VCL}
  \sdiff{u}{t} + \sdiff{}{x} f(u) = \sdifff{u}{x}{2} \Xx{for} (x,t)\in\R\times (0,\infty).
 \end{equation}
The existence and asymptotic stability of traveling wave solutions of equation~\eqref{eq:VCL} has been studied thoroughly.
A first example of equation~\eqref{eq:FCL} with nonlocal diffusion is    
\begin{equation} \label{eq:FCL:FL} % Nonlinear Conservation Law with Nonlocal Diffusion - Fractional Laplacian 
 \partial_t u + \partial_x f(u) =  \Riesz u \Xx{for} (x,t)\in\R\times (0,\infty)\,,
\end{equation}
with a fractional Laplacian $\Riesz$, $0<\alpha\leq 2$, which has been studied e.g. in~\cite{Biler+etal:1998,Droniou+etal:2003}. 
For $1<\alpha\leq 2$, the Cauchy problem for~\eqref{eq:FCL:FL} with $f\in C^\infty(\R)$ and essentially bounded initial data
 has a global-in-time mild solution which becomes smooth for positive times,
 see~\cite{Droniou+etal:2003} and its extension to~\eqref{eq:FCL} in~\cite{Achleitner+etal:2012}.

Other examples of equation~\eqref{eq:FCL} with nonlocal diffusion appear in viscoelasticity~\cite{Sugimoto+Kakutani:1985}
 and fluid dynamics~\cite{Kluwick+etal:2010}.
In particular,  
 \begin{equation} \label{eq:FCL:onesided}
  \sdiff{u}{t} + \sdiff{}{x} f(u) = \sdiff{}{x} \Caputo u \Xx{for} (x,t)\in\R\times (0,\infty)\,,
 \end{equation}
 with $0<\gamma<1$ is used as a model for the far-field behavior of uni-directional viscoelastic waves~\cite{Sugimoto+Kakutani:1985},
 and derived as a model for the internal structure of hydraulic jumps in near-critical single-layer flows~\cite{Kluwick+etal:2010}. 
Moreover the nonlocal operator $\Caputo[1/3]$ appears in Fowler's equation
\begin{equation} \label{eq:Fowler}
  \partial_t u + \partial_x u^2= \partial_x^2 u-\partial_x \Caputo[1/3] u \,,
\end{equation}
which models the uni-directional evolution of sand dune profiles~\cite{Fowler:2002}.
% Here the fractional derivative appears with the negative sign, but this instability is 
% regularized by the second order derivative. Alibaud et al. showed the well-posedness of 
% \eqref{eq:Fowler} in $L^2$ as well as the violation of the maximum principle, which is intuitive 
% in the context of the application due to underlying erosions~\cite{AAI}.  
% Traveling wave solutions of~\eqref{eq:Fowler} have been analyzed by Alvarez-Samaniego and 
% Azerad in~\cite{AA}.
In the theory of water waves similar models $\partial_t u + \partial_x u^2 =\mathcal{N}[u]$ with different (nonlocal) Fourier multiplier operators $\mathcal{N}$ are studied,
 see the book~\cite{Naumkin+Shishmarev:1994} and references therein.

\medskip
%In this paper, our aim is to derive the asymptotic stability property and 
%the convergence rate toward the corresponding traveling wave solutions.
%by employing the space-time weighted energy method used in \cite{Kawa2} and \cite{Nishi}. 
%
To explain our main results, we introduce traveling wave solutions for equation~\eqref{eq:FCL}.
Traveling wave solutions (TWS) are of the form $u(x,t)=\overline u(\xi)$ for some profile~$\profile$
 with $\xi  = x - \wavespeed t$ and (constant) wave speed~$\wavespeed\in\R$.
We are interested in TWS with profiles~$\profile$ connecting distinct endstates~$\upm$ such that
\begin{equation}\label{endpoint}
	\lim_{x \to \pm \infty} \profile(x) = u_{\pm} \ .
\end{equation}
Using this ansatz in equation \eqref{eq:FCL} and assumption \eqref{endpoint},
 we find that the wave speed $\wavespeed$ has to satisfy the Rankine-Hugoniot condition
 \begin{equation} \label{RH}
  s = \frac{f(\up) - f(\um)}{\up -\um}.
 \end{equation}
Here, an extension of Riesz-Feller operators to non-integrable functions is needed, see Appendix~\ref{app:RF}.
Due to translational invariance of equation~\eqref{eq:FCL},
 traveling wave solutions are only unique up to a shift.

% Existence of traveling wave solutions
For classical viscous conservation laws~\eqref{eq:VCL},
 the profile of a TWS satisfies an ordinary differential equation $\profile' = f(\profile)-s\profile - (f(\um)-s\um)$.
In fact, TWS exist only for parameters $\ShockTriple$ satisfying~\eqref{RH} and $\up<\um$.
In case of equation~\eqref{eq:FCL:onesided},
 the existence and asymptotic stability (without decay rates) of traveling wave solutions
 for parameters $\ShockTriple$ satisfying~\eqref{RH} and $\up<\um$
 has been shown~\cite{Achleitner+Hittmeir+Schmeiser:2011,Cuesta+Achleitner:2017}.
Here, a profile satisfies a fractional differential equation $\Caputo \profile = f(\profile)-s\profile - (f(\um)-s\um)$.
The proof of existence relies on the causality of the Caputo derivative~$\Caputo$,
 i.e. to evaluate $\Caputo\profile$ at $x$ the profile $\profile$ on $(-\infty,x)$ is needed.
In contrast, the profile for a TWS of a nonlocal conservation law~\eqref{eq:FCL:FL} for $1<\alpha<2$ has to satisfy 
 \[ \Riesz \profile (x)
      =\integral{\R}{ \frac{\profile(x+\xi)-\profile(x)-\profile'(x)\,\xi}{\xi^{1+\alpha}} }{\xi}
      = \sdiff{}{x} \big(f(\profile)-s\profile - (f(\um)-s\um) \ . 
 \]
Thus $\Riesz \profile (x)$ depends on the whole profile~$\profile$.
For fractal Burgers equation, i.e. equation~\eqref{eq:FCL:FL} with $1<\alpha<2$ and Burgers flux function $f(u)=u^2$,
 the existence of traveling wave solutions has been proven recently~\cite{Chmaj:2014}.
The idea is to approximate the operators $\Riesz$ by convolution operators $\cK_\epsilon[u] =K_\epsilon\ast u -u$
 for suitable convolution kernels $K_\epsilon\in L^1(\R)$.
The existence of TWS for the approximate equations is known
 and the TWS is established as the limit of this family. 
It is conceivable to use this approach to prove the existence of traveling wave solutions 
 in the general case~\eqref{eq:FCL} for convex flux functions~$f$ with $1<\alpha<2$ and $|\theta|\leq 2-\alpha$.

For fractal Burgers equation~\eqref{eq:FCL:FL} results in the complementary cases 
 $\alpha\in (0,1)$ and/or $\um\leq\up$ are also available:
For example, for $\alpha\in (0,1)$ and~\eqref{endpoint} no traveling wave solutions of~\eqref{eq:FCL:FL}
 with smooth profile exists~\cite{Biler+etal:1998}.
Whereas under the assumption $\um<\up$
 the solution of~\eqref{eq:FCL:FL} converges as $t\to\infty$ to a rarefaction wave of the underlying Burgers equation if $\alpha\in (1,2)$
 and to a self-similar solution if $\alpha=1$; see~\cite{Karch+Miao+Xu:2008} and~\cite{Alibaud+Imbert+Karch:2010}, respectively.

\medskip
The asymptotic stability of traveling wave solutions of classical viscous conservation laws~\eqref{eq:VCL} has been studied thoroughly.
At first, historically, Il'in and Oleinik \cite{IO60} proved the asymptotic stability 
of nonlinear waves for viscous conservation laws \eqref{eq:VCL} by  
making use of the maximum principle for linear parabolic equations.
For Burgers' equation, i.e.~equation~\eqref{eq:VCL} with Burgers' flux function~$f(u) = u^2$, Nishihara \cite{N85} obtained the decay estimates toward 
traveling wave solutions by making use of the explicit solution formula.
And, Kawashima and Matsumura \cite{Kawashima+Matsumura:1985} generalized Nishihara's time decay result to a class of viscous conservation laws.
They considered weighted $L^2$ spaces and used a weighted energy method.
Furthermore, Kawashima, Nishibata and Nishikawa \cite{Kawashima+etal:2004} extended the $L^2$ energy method to general $L^p$ spaces.
Their techniques have been applied to a model system for compressible viscous gas in \cite{MN85} and a hyperbolic system with relaxation in \cite{U09}.

Assuming the existence of a traveling wave solution of~\eqref{eq:FCL} with monotone decreasing profile,
 we show that asymptotic stability of a traveling wave solution in a Sobolev space setting follows from a standard Lyapunov functional argument:
To investigate the stability of the traveling wave solution with profile~$\profile$, 
 we consider initial data $u_0$ such that $u_0 - \profile$ is integrable
 and determine the unique shift $x_0$ which yields 
$
\integrall{-\infty}{\infty}{ (u_0(\xi) - \profile(\xi + x_0)) }{\xi} = 0.
$
Moreover,
 we restrict the domain of initial data $u_0$ further such that 
$
W_0(\xi)=\integrall{-\infty}{\xi}{ (u_0(\eta)-\profile(\eta))}{\eta}
$ 
exists (using a suitable shifted profile~$\profile$) and satisfies $W_0 \in H^2$.
(For details, we refer to \cite{U09}.)
More precisely, we can derive the following theorem.

%%%%%%%%%%%%%%%%%%%%%%%%%%%%%%%%%%%%%%%%%%%%
\begin{theorem}\label{theorem:AS}
Suppose $1<\alpha \le 2$ and $\theta\leq\min\{\alpha,2-\alpha\}$.
Let the flux function $f\in C^2(\R)$ be convex
 and let $u(x,t)=\profile(x- \wavespeed t)$ be a %smooth shifted
 traveling wave solution of~\eqref{eq:FCL}
 with monotone decreasing profile~$\profile$.
Let $u_0$ be an initial datum for~\eqref{eq:FCL}
 such that %(for a shifted traveling wave profile~$\profile$)
 $W_0(\xi)=\integrall{-\infty}{\xi}{ (u_0(\eta)-\profile(\eta)) }{\eta}$
 satisfies $W_0\in H^2(\mathbb{R})$.
Then there exists a positive constant $\delta_0$ such that if $\norm{W_0}_{H^2} \le \delta_0$,
then the Cauchy problem \eqref{eq:FCL}
%for equation~\eqref{eq:FCL} with initial datum~$u_0$ 
has a unique global solution
converging to the traveling wave in the sense that
$$
\| (u - \profile)(t)\|_{L^\infty} \longrightarrow 0  \qquad \text{for} \quad t \to \infty. 
$$
% \[ \lim_{t\rightarrow \infty}  \integrall{t}{\infty}{ \norm{u(\cdot,\tau)-\profile }^2_{H^1} }{\tau}=0 \,. \]
\end{theorem}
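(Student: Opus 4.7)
The plan is to pass to a moving frame, set up an evolution equation for the antiderivative $W$ of the perturbation, and perform a Lyapunov-functional argument in $H^2$ in the spirit of~\cite{Kawashima+Matsumura:1985,U09}, adapted to the nonlocal dissipation $\RieszFeller$. Setting $\xi = x - \wavespeed t$ and $u = \profile(\xi) + \partial_\xi W(\xi,t)$, the traveling-wave identity $\partial_\xi[f(\profile)-\wavespeed\profile] = \RieszFeller\profile$ together with the commutation $\RieszFeller\partial_\xi = \partial_\xi\RieszFeller$ yields, after integrating once in $\xi$,
\begin{equation*}
\partial_t W + \bigl[f(\profile+\partial_\xi W)-f(\profile)-\wavespeed\,\partial_\xi W\bigr] = \RieszFeller W,\qquad W(\xi,0)=W_0(\xi).
\end{equation*}
Local well-posedness in $H^2(\R)$ then follows from a mild-solution fixed-point argument based on the semigroup generated by $\RieszFeller$ (cf.~\cite{Droniou+etal:2003,Achleitner+etal:2012}).

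The core step is the energy identity at the lowest order. Multiplying by $W$, integrating in $\xi$, and expanding the nonlinearity as $(f'(\profile)-\wavespeed)\partial_\xi W + O((\partial_\xi W)^2)$, the key integration by parts
\begin{equation*}
\int_\R W\,(f'(\profile)-\wavespeed)\,\partial_\xi W\,d\xi = -\tfrac{1}{2}\int_\R f''(\profile)\,\profile'(\xi)\,W^2\,d\xi
\end{equation*}
produces the nonnegative weight $-f''(\profile)\profile'\ge 0$ by convexity of $f$ and monotonicity of $\profile$. Plancherel's identity together with the parity of $|\widehat W|^2$ gives $\int_\R W\,\RieszFeller W\,d\xi = -\cos(\theta\pi/2)\,\bigl\|\,|k|^{\alpha/2}\widehat W\bigr\|_{L^2}^2 \le 0$, and $|\theta|\le 2-\alpha<1$ makes $\cos(\theta\pi/2)>0$, supplying a genuine fractional dissipation. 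The cubic remainder is bounded by $C\|W\|_{L^\infty}\|\partial_\xi W\|_{L^2}^2$ and is absorbed under a smallness assumption on $\|W\|_{H^2}$.

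Differentiating the equation once and twice in $\xi$ and running analogous computations on $\partial_\xi W$ and $\partial_\xi^2 W$ gives further energy identities. The weighted terms $\int f''(\profile)\profile'(\partial_\xi^j W)^2$ for $j\ge 1$ now have the wrong sign, but the frequency split
\begin{equation*}
\|\partial_\xi^j W\|_{L^2}^2 \le K^{-\alpha}\bigl\||D|^{\alpha/2}\partial_\xi^j W\bigr\|_{L^2}^2 + K^{2-\alpha}\bigl\||D|^{\alpha/2}\partial_\xi^{j-1} W\bigr\|_{L^2}^2,
\end{equation*}
with $K$ large, distributes them between the fractional dissipation at order $j$ (absorbed) and at order $j-1$ (controlled by a smaller constant). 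Commutators between $\RieszFeller$ and multiplication by the $\xi$-dependent coefficients $f'(\profile)-\wavespeed$ and $f''(\profile)\profile'$ are of strictly lower differential order than the fractional dissipation and are absorbed similarly. A suitable linear combination then yields
\begin{equation*}
\frac{d}{dt}\,\mathcal E(t)+c\,\mathcal D(t)\le 0,\quad \mathcal E(t)\sim\|W(t)\|_{H^2}^2,\quad \mathcal D(t)\gtrsim\sum_{j=0}^{2}\bigl\||D|^{\alpha/2}\partial_\xi^j W\bigr\|_{L^2}^2,
\end{equation*}
valid as long as $\|W(t)\|_{H^2}$ is small. A standard continuation argument thus gives a global solution with $\sup_t\|W\|_{H^2}\lesssim\|W_0\|_{H^2}$ and $\int_0^\infty\mathcal D(t)\,dt<\infty$ whenever $\|W_0\|_{H^2}\le\delta_0$.

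For the claimed $L^\infty$ convergence, note that $u-\profile=\partial_\xi W$ and that $\alpha>1$ gives the Sobolev embedding $H^{\alpha/2}(\R)\hookrightarrow L^\infty(\R)$; interpolating the bounded $H^2$-norm of $W$ with the decaying quantities $\||D|^{\alpha/2}\partial_\xi^j W\|_{L^2}$ shows $\|\partial_\xi W\|_{H^{\alpha/2}}\to 0$ once the latter tend to zero. The integrability of $\mathcal D$ combined with uniform continuity of $\mathcal D(t)$ (which follows from the a priori bound and the equation) forces $\mathcal D(t)\to 0$ by a Barbalat-type argument, yielding $\|(u-\profile)(t)\|_{L^\infty}\to 0$. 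The principal obstacle is the higher-order treatment of $\RieszFeller$: unlike the Laplacian it does not admit clean integration by parts, so the whole scheme rests on Plancherel identities and commutator estimates. This is compounded by the fact that the lowest-order linear coefficient $f'(\profile)-\wavespeed$ has no fixed sign (the Lax condition gives $f'(u_-)>\wavespeed>f'(u_+)$), leaving the positive weight $-f''(\profile)\profile'$ extracted from the basic identity as the only genuine source of low-frequency dissipation, which must then be propagated to higher-order norms via the frequency split above.
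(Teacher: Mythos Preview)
Your approach matches the paper's: moving frame, antiderivative $W$, $L^2$ energy estimates at orders $0,1,2$ combined via the same frequency-split interpolation $\|v\|_{\dot H^1}^2 \le \varepsilon^{\alpha-2}\|v\|_{\dot H^{\alpha/2}}^2 + \varepsilon^{\alpha}\|v\|_{\dot H^{\alpha/2+1}}^2$ to absorb the wrong-sign lower-order terms into the fractional dissipation, followed by continuation.

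Two small points. First, the commutator worry is a red herring: since $\RieszFeller$ commutes with $\partial_\xi$, each differentiated equation carries $\RieszFeller\partial_\xi^j W$ on the right, and testing against $\partial_\xi^j W$ gives the clean Plancherel dissipation $-\cos(\theta\pi/2)\|\partial_\xi^j W\|_{\dot H^{\alpha/2}}^2$; the variable-coefficient pieces sit entirely in the flux part and are handled by the product rule and direct $L^2$ bounds, exactly as in the paper. No estimate on $[\RieszFeller,\,g(\xi)\cdot]$ is ever needed.

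Second, your Barbalat step on $\mathcal D(t)$ has a small gap. The a~priori estimate gives $W\in L^\infty_t H^2\cap L^2_t \dot H^{2+\alpha/2}$, so the top-order piece $\|W(t)\|_{\dot H^{2+\alpha/2}}^2$ of $\mathcal D$ is only time-integrable, not pointwise bounded, and its uniform continuity is not available from ``the a~priori bound and the equation''. The paper avoids this by applying the $W^{1,1}(0,\infty)$ argument to the bounded quantity $\|U(t)\|_{L^2}^2$: from the $U$-equation one bounds $\|\partial_t U\|_{L^2}\le C\sum_{\ell=0}^2\|W\|_{\dot H^{\alpha/2+\ell}}$, hence $\tfrac{d}{dt}\|U\|_{L^2}^2$ is integrable, and together with $\int_0^\infty\|U\|_{L^2}^2\,dt<\infty$ (from the dissipation via the interpolation above) this forces $\|U(t)\|_{L^2}\to 0$. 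The conclusion then follows from $\|U\|_{L^\infty}\le\sqrt2\,\|U\|_{L^2}^{1/2}\|\partial_\xi U\|_{L^2}^{1/2}$ and the uniform $H^2$ bound.
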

%%%%%%%%%%%%%%%%%%%%%%%%%%%%%%%%%%%%%%%%%%%%

The proof of Theorem~\ref{theorem:AS} for the general equation~\eqref{eq:FCL}
 is similar to the one of~\cite[Theorem 4]{Achleitner+Hittmeir+Schmeiser:2011} for the special case~\eqref{eq:FCL:onesided} without decay rates.

Our main result is to prove the asymptotic stability with algebraic-in-time decay rate
 for traveling wave solutions of~\eqref{eq:FCL} with monotone decreasing profiles.

%%%%%%%%%%%%%%%%%%%%%%%%%%%%%%%%%%%%%%%%%%%%
\begin{theorem}\label{theorem:CR}
Suppose the same assumptions as in Theorem~\ref{theorem:AS} hold
 and $f\in C^\infty(\R)$.
For all $W_0\in W^{1,\infty}(\R)\cap W^{1,1}(\R)$,
 the Cauchy problem~\eqref{eq:FCL} has a unique global solution.
Moreover, there exists a positive constant $\delta_1$ such that
 if $\norm{W_0}_{W^{1,1}} \le \delta_1$
 then the unique global solution~$u$ satisfies
 \begin{equation}\label{decay}
  \|(u - \profile)(t)\|_{L^2} \leq C E_1 (1+t)^{-1/(2\alpha)} %\Xx{for} t\geq 0 
 \end{equation}
for $t \ge 0$, where $E_1 := \|W_0\|_{H^1} + \|W_0\|_{W^{1,1}}$ and $C$ is a constant which is independent of time~$t$.
\end{theorem}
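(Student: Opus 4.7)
The strategy is to work in the moving frame $\xi = x - \wavespeed t$ with the antiderivative
\[
  W(\xi,t) := \int_{-\infty}^{\xi}\bigl(u(\eta+\wavespeed t,t) - \profile(\eta)\bigr)\,d\eta,
\]
which, because $\RieszFeller$ commutes with $\partial_\xi$, satisfies the nonlocal equation
\begin{equation*}
  \partial_t W + (f'(\profile)-\wavespeed)\,\partial_\xi W + N(\xi,\partial_\xi W) = \RieszFeller W,
\end{equation*}
where $N(\xi,v) := f(\profile+v)-f(\profile)-f'(\profile)v = \tfrac12 f''(\profile) v^2 + O(v^3)$ is genuinely quadratic in $v := \partial_\xi W$. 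Setting $a(\xi) := f'(\profile(\xi))-\wavespeed$, convexity of $f$ and monotone decrease of $\profile$ give the crucial sign condition $a'(\xi) = f''(\profile)\,\profile' < 0$, which is the mechanism that drives all the stability estimates.

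In one space dimension the elementary embedding $W^{1,\infty}(\R)\cap W^{1,1}(\R) \hookrightarrow H^1(\R)$ (via $\|g\|_{L^2}^2\le \|g\|_{L^\infty}\|g\|_{L^1}$) delivers smallness in $H^1$ from the hypothesis. Testing the $W$-equation against $W$ and $\partial_\xi^2 W$, using Parseval for $\RieszFeller$ (whose real symmetric part equals $-\cos(\theta\pi/2)|k|^\alpha$, the imaginary part dropping by oddness when $W$ is real), integrating by parts on the transport term, and absorbing the cubic remainder by Sobolev embedding and smallness, I would obtain
\begin{equation*}
  \frac{d}{dt}\|\partial_\xi^j W\|_{L^2}^2 + c_1\,\bigl\||k|^{\alpha/2}\widehat{\partial_\xi^j W}\bigr\|_{L^2}^2 + c_2 \int_\R |a'(\xi)|\,(\partial_\xi^j W)^2\,d\xi \le 0,\quad j=0,1,
\end{equation*}
closing the Lyapunov estimate and yielding global existence together with a uniform-in-time bound $\|W(t)\|_{H^1}\le C\|W_0\|_{H^1}$.

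To upgrade from convergence to the algebraic rate, I would propagate an $L^1$ bound on $v = \partial_\xi W = u-\profile$ via Duhamel: since $e^{t\RieszFeller}$ is an $L^1$-contractive semigroup whose kernel is a probability density with $\|e^{t\RieszFeller}\|_{L^1\to L^p}\le C\,t^{-(1-1/p)/\alpha}$, the conservation-form equation for $v$ (obtained by differentiating the $W$-equation in $\xi$) admits a variation-of-parameters representation which, combined with the $H^1$-smallness of $W$ and the structure $\partial_\xi N = O(v\,\partial_\xi v)$, closes a bootstrap yielding $\|v(t)\|_{L^1} \le C\|v_0\|_{L^1}$ uniformly in $t$. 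The one-dimensional fractional Nash inequality
\begin{equation*}
  \|v\|_{L^2}^{2(1+\alpha)} \le C\,\|v\|_{L^1}^{2\alpha}\,\bigl\||k|^{\alpha/2}\widehat v\bigr\|_{L^2}^2
\end{equation*}
then converts the $L^2$-energy inequality for $v$ into an ODE $\dot y \le -c\,y^{1+\alpha}/M^{2\alpha}$ with $y=\|v\|_{L^2}^2$ and $M=\|v\|_{L^1}$, whose explicit solution gives $y(t)\le C(1+t)^{-1/\alpha}$, hence the claimed decay $\|u-\profile\|_{L^2}=\|v\|_{L^2}\le C(1+t)^{-1/(2\alpha)}$.

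The main obstacle is the energy estimate at the $v$-level. Differentiating the $W$-equation once in $\xi$ produces a term $\int a'(\xi)\,v^2\,d\xi$ of \emph{unfavorable} sign (because $a' < 0$), which does not provide dissipation and must be absorbed. The cleanest route is to work with a single coupled time-weighted functional joining $\|W\|_{L^2}^2$, whose $a'$-term acquires the \emph{favorable} sign after one integration by parts, and $\|v\|_{L^2}^2$, so that the good dissipation from $W$ compensates the bad contribution at the $v$-level, with the Riesz-Feller quadratic form mopping up the remainder via smallness and interpolation. Propagating $\|v(t)\|_{L^1}$ is similarly delicate: because $v$ changes sign, maximum-principle arguments do not apply, and one must rely on the $L^1$-contractivity of $e^{t\RieszFeller}$ combined with the $H^1$-smallness from the energy step to control the contribution of $\partial_\xi N$ in the Duhamel formula.
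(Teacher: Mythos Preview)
Your architecture---antiderivative $W$, coupled $H^1$ energy estimate exploiting $f''(\profile)\profile'\le 0$, then Nash plus $L^1$ control to extract the rate---matches the paper's, but the decay step has a genuine gap. You apply Nash to the $v$-level energy inequality alone to get $\dot y\le -cy^{1+\alpha}/M^{2\alpha}$ with $y=\|v\|_{L^2}^2$; however, as you yourself later note, that inequality carries the wrong-sign term $+C\|v\|_{L^2}^2$ (so your displayed inequality for $j=1$ is already incorrect), and the actual ODE $\dot y\le Cy-cy^{1+\alpha}/M^{2\alpha}$ gives no decay. The cure you propose---coupling with the $W$-level---is right, but then the joint functional is $\mathcal{E}=\|W\|_{L^2}^2+\gamma_1\|v\|_{L^2}^2$ with dissipation $\mathcal{D}=\|W\|_{\dot H^{\alpha/2}}^2+\gamma_1\|v\|_{\dot H^{\alpha/2}}^2$, and converting $\dot{\mathcal{E}}+c\mathcal{D}\le 0$ into a rate via Nash now requires a \emph{uniform-in-time bound on $\|W(t)\|_{L^1}$}, not only on $\|v(t)\|_{L^1}$. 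You never establish this, and Duhamel alone cannot: the forcing $f(\profile+v)-f(\profile)-\wavespeed v$ contains the linear piece $(f'(\profile)-\wavespeed)v$, so the mild formulation only gives $\|W(t)\|_{L^1}\le\|W_0\|_{L^1}+Ct$.

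The paper supplies this missing ingredient by an $L^1$ energy method \`a la Kawashima--Nishibata--Nishikawa: test the $W$-equation against the mollified sign $s_\delta(W)=(\rho_\delta\ast\operatorname{sgn})(W)$, use the pointwise convexity inequality $\eta'(W)\,\RieszFeller W\le\RieszFeller\eta(W)$ (valid from the singular-integral representation of $\RieszFeller$) to render the nonlocal contribution nonpositive, and integrate the transport term by parts to obtain $\int s_\delta(W)\,h'(\profile)\,\partial_\xi W=-\int S_\delta(W)\,h''(\profile)\,\profile'\ge 0$ with $h(u)=f(u)-\wavespeed u$. This yields $\|W(t)\|_{L^1}\le C(\|W_0\|_{W^{1,1}}+\|W_0\|_{H^1}^2)$; the rate then follows by multiplying the coupled energy inequality by $(1+t)^\beta$ and applying Nash to both $W$ and $v$. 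As an aside, your Duhamel route to $\|v(t)\|_{L^1}\le C$ is unnecessarily indirect: since $u=\profile+v$ and $\profile$ both solve the same equation, the $L^1$-contraction principle for~\eqref{eq:FCL} gives $\|v(t)\|_{L^1}\le\|v_0\|_{L^1}$ immediately.
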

%%%%%%%%%%%%%%%%%%%%%%%%%%%%%%%%%%%%%%%%%%%%

%%%%%%%%%%%%%%%%%%%%%%%%%%%%%%%%%%%%%%%%%%%%
%\begin{theorem}\label{theorem:CR}
% Under the assumption of Theorem~\ref{theorem:AS} 
% with $W_0\in H^2(\R)\cap W^{2,1}(\R)$, %and $\norm{W_0}_{H^2(\R)}$ sufficiently small,
% the solution constructed in Theorem~\ref{theorem:AS} satisfies 
% \begin{equation}
% \|(u - \profile)(t)\|_{H^1} \leq C \norm{W_0}_{W^{2,1}(\R)} (1+t)^{-1/\alpha} \Xx{for} t\geq 0 
% \end{equation}
% and a constant $C$ independent of time $t$.
%\end{theorem}
%%%%%%%%%%%%%%%%%%%%%%%%%%%%%%%%%%%%%%%%%%%%

%%%%%%%%%%%%%%%%%%%%%%%%%%%%%%%%%%%%%%%%%%%%
\begin{remark}%\label{theorem:CR}
%The decay rate \eqref{decay} seems to be optimal,
% since it is optimal in the case of scalar viscous conservation laws~\eqref{eq:VCL} with $\alpha = 2$ and $\theta = 0$.
%Indeed,
We employ sharp interpolation inequalities in Sobolev spaces to derive~\eqref{decay}.
In this way optimal decay estimates for the asymptotic stability of viscous rarefaction waves
 in scalar viscous conservation laws~\eqref{eq:VCL} have been derived in~\cite{HUK09}.
\end{remark}
%%%%%%%%%%%%%%%%%%%%%%%%%%%%%%%%%%%%%%%%%%%%

%%%%%%%%%%%%%%%%%%%%%%%%%%%%%%%%%%%%%%%%%%%%
\begin{remark}%\label{theorem:CR}
We want to explain the functional setting in Theorem~\ref{theorem:CR}:
We considered the function spaces
  $H^2(\R)\cap W^{2,1}(\R) \subset W^{1,\infty}(\R)\cap W^{1,1}(\R) \subset H^1(\R)\cap W^{1,1}(\R)$
	in variants of Theorem~\ref{theorem:CR}.
 The choice $H^1(\R)\cap W^{1,1}(\R)$ leads to the restriction $\alpha\in(3/2,2)$
  if we use an estimate of the nonlinearity like Dix~\cite{Dix:1992,Dix:1996}
	to establish the existence of solutions for the Cauchy problem.
Assuming higher regularity of the initial data removes the need for this restriction:	
 Under the assumptions of Theorem~\ref{theorem:AS} 
 with $W_0\in H^2(\R)\cap W^{2,1}(\R)$, %and $\norm{W_0}_{H^2(\R)}$ sufficiently small,
 the solution constructed in Theorem~\ref{theorem:AS} satisfies 
 \begin{equation*}
  \|(u - \profile)(t)\|_{H^1} \leq C \widetilde{E}_1 (1+t)^{-1/(2\alpha)} %\Xx{for} t\geq 0 
 \end{equation*}
 for $t \ge 0$, where $\widetilde{E}_1 := \|W_0\|_{H^2} + \|W_0\|_{W^{2,1}}$ 
 and a constant $C$ independent of time $t$.
Our choice $W_0 \in W^{1,\infty}(\R)\cap W^{1,1}(\R)$ in Theorem~\ref{theorem:CR}
 leads to the technical assumption $f\in C^\infty(\R)$,
 since we use a result on the existence of global-in-time solutions for the Cauchy problem
 with essentially bounded initial data~\cite{Droniou+etal:2003,Achleitner+etal:2012}.
The assumption $f\in C^2(\R)$ in Theorem~\ref{theorem:AS} could be retained
 by aiming for less regularity in their approach.
\end{remark}
%%%%%%%%%%%%%%%%%%%%%%%%%%%%%%%%%%%%%%%%%%%%

\medskip
Unfortunately it is difficult to apply the weighted energy method in \cite{Kawashima+Matsumura:1985} to our problem (to derive the convergence rate).
Instead of this method, we employ another technique which focuses on the interpolation property in Sobolev space.
For example, this argument is utilized in \cite{HUK09}.

The contents of this paper are as follows. 
In Section~\ref{sec:reformulation}, we reformulate our problem and consider the well-posedness of the new one.
In Section~\ref{sec:AS}, we derive the asymptotic stability result 
by uniform energy estimates as {\it a-priori} estimates of solutions in the Sobolev space $H^2$.
Furthermore, our main result on the asymptotic stability with explicit algebraic decay rate in Theorem~\ref{theorem:CR} is proved in Section~\ref{sec:Rates}, 
by using the energy method with an $L^2$--$L^1$ interpolation argument. 
In Appendix~\ref{app:RF}, we collect results on the singular integral representation of Riesz-Feller operators.

 \bigskip
 \noindent
 {\bf Notation.}
Before closing this section, we give some notations used in this paper.
We define the Fourier transform for $v\in\SchwartzTF$ in the Schwartz space $\SchwartzTF$ as 
 \begin{align*}
  \hat{v}(k) = \Fourier [v](k) &:= \integral{\R}{ e^{-\ii k x} v(x) }{x} \Xx{for} k \in \R \,,
 \intertext{and the inverse Fourier transform as}
  \FourierInv [v](x) &:= \frac{1}{2\pi} \integral{\R}{ e^{\ii k x} v(k) }{k} \Xx{for} x\in\R \,.
 \end{align*}
 The Fourier transform and its inverse are linear operators
 and $\Fourier$ and $\FourierInv$ will denote also their respective extensions to $L^2(\R)$.
 
For $1 \le p \le \infty$, we denote by $L^p=L^p(\R)$ the usual Lebesgue space over $\R$ 
with norm $\| \cdot \|_{L^p}$, and $W^{s,p}=W^{s,p}(\R)$ the usual Sobolev space over $\R$ 
with norm $\| \cdot \|_{W^{s,p}}$.
Using the short-hand notation $H^{s}(\R) := W^{s,2}(\R)$ with norm $\| \cdot \|_{H^{s}}$.
Moreover, we set $\norm{W(t)}_{W^{1,\infty}} =\max\{\norm{W(t)}_{L^\infty},\ \norm{\sdiff{W}{\xi}(t)}_{L^\infty}\}$
 and its analog in case of $\norm{W(t)}_{W^{\ell,\infty}}$ for all $\ell\in\N$.
Finally, for nonnegative integer~$\ell$, $C^{\ell}(I;X)$ (respectively $C^{\ell}_b(I;X)$) denotes the space of 
$\ell$-times continuously differentiable functions (respectively with bounded derivatives)
on the interval $I$ with values in the Banach space $X$. 

The constants in our estimates may change their value from line to line.

%%%%%%%%%%%%%%%%%%%%%
%%%%%%%    Section 2   %%%%%%%
%%%%%%%%%%%%%%%%%%%%%

\section{Reformulation for the problem} \label{sec:reformulation}

In the special case~\eqref{eq:FCL:onesided},
 the existence and asymptotic stability of traveling wave solutions $u(x,t)=\profile(x-\wavespeed t)$ 
 with monotone decreasing profile~$\profile$ has been proven without rates of decay~\cite{Achleitner+Hittmeir+Schmeiser:2011,Cuesta+Achleitner:2017}.
However, assuming in the general case~\eqref{eq:FCL} the existence of a traveling wave solution $u(x,t)=\profile(x-\wavespeed t)$
 with monotone decreasing profile~$\profile$,
 then the proof of asymptotic stability generalizes with obvious modifications:

To prove the asymptotic stability of a traveling wave solution~$\profile$ of~\eqref{eq:FCL},
 one can follow the standard approach called the anti-derivative method introduced in \cite{Kawashima+Matsumura:1985} 
 for viscous conservation laws.
It is convenient to cast~\eqref{eq:FCL} in a moving coordinate frame $(x,t)\mapsto (\xi,t)$, such that 
\begin{equation} \label{CLND+MCF}
  \partial_t u + \partial_\xi (f(u)-\wavespeed u) = \RieszFeller u \,,
\end{equation}
and $\profile$ is a stationary solution of~\eqref{CLND+MCF}.
The Cauchy problem for~\eqref{CLND+MCF} with initial datum $u_0$ governs the evolution of $u_0$.
If its solution $u$ is considered as a perturbation of the traveling wave solution $\profile$,
 then this perturbation $U(\xi,t) := u(\xi,t) - \profile(\xi)$ satisfies the Cauchy problem 
 \begin{equation} \label{CP:U}
 	\begin{split}
 \partial_t U + \partial_\xi (f(\profile+U)-f(\profile)) -\wavespeed \partial_\xi U &= \RieszFeller U, \\
 U(\xi,0) &=U_0(\xi),
	\end{split}
 \end{equation}
 %with initial condition {\color{red}$U(\xi,0)=U_0(\xi)$ 
 where $U_0(\xi):=u_0(\xi)-\profile(\xi)$.
 %
 %Solutions of~\eqref{CP:U} exhibit mass conservation,
 %since $\integral{\R}{ \RieszFeller U }{\xi}=0$ for $U\in\SchwartzTF(\R)$ due to Lemma~\ref{cor:II}
 %and for general~$U$ due to an approximation argument. 
%Thus a zero-mass initial perturbation $U_0$ yields
 %\[ \integral{\R}{ U(\xi,t) }{\xi} = \integral{\R}{ U_0(\xi) }{\xi} = 0 \Xx{for all} t\geq 0 \,. \]
%In case of viscous conservation laws, it is known that, a given traveling wave solution is asymptotically stable 
% only with respect to initial perturbations having zero mass.
%Likewise, for a given traveling wave solution~$\profile$,
% we will consider only initial data~$u_0$ such that the perturbation~$U_0:=u_0-\profile$ is integrable
% and its primitive~$W_0(\xi):= \integrall{-\infty}{\xi}{ U_0(\eta) }{\eta}$ for $\xi\in\R$ satisfies $\lim_{\xi\to\pm\infty} W_0(\xi) = 0$. 
%
To obtain the desired result, we try to construct the $L^2$-energy estimate for $U$ by employing the energy method.
However, because of the decreasing property of traveling wave solutions, it is hard to construct the $L^2$-energy estimate.
To overcome this difficulty, we apply the anti-derivative method.

Precisely, we introduce the new function $W(\xi,t)$ which satisfies $\partial_\xi W = U$.
Then we can formally rewrite \eqref{CP:U} as
 \begin{equation}\label{CP:W}
 	\begin{split}
 \sdiff{W}{t} + f(\profile+\sdiff{W}{\xi})-f(\profile)-\wavespeed \sdiff{W}{\xi} &= \RieszFeller W,\\
 W(\xi,0) &= W_0(\xi).
 	\end{split}
 \end{equation}
If a global-in-time solution of \eqref{CP:W} with $W_0(\xi) = \integrall{-\infty}{\xi}{ U_0(\eta) }{\eta}$ is sufficiently smooth,
 then its derivative $\partial_\xi W$ satisfies Cauchy problem \eqref{CP:U}.
Therefore, we try to construct a global-in-time solution of \eqref{CP:W}, instead of \eqref{CP:U}.
For this purpose, we discuss the well-posedness of problem \eqref{CP:W} in this section.

%\subsubsection*{Well-posedness of the perturbation equation}	

The well-posedness of the Cauchy problem for~\eqref{CP:W},
% \begin{equation} \label{CP:W} 
%  \sdiff{W}{t} + F(\profile,\sdiff{W}{\xi}) + \tilde f'(\profile)\sdiff{W}{\xi} = \RieszFeller W \,, \qquad W(\cdot,0)=W_0\,,
% \end{equation}
will follow from a contraction argument.
Assuming $f(u)=u^2$ and $\alpha >3/2$ allows to estimate the nonlinearity
 in the fashion of Dix~\cite{Dix:1992,Dix:1996} implying the well-posedness in $H^1$.
For general flux functions and $\alpha \in (1,2]$,
 we have to require more regularity of the initial data,
 {e.g.} $W_0\in H^2$. 

%%%%%%%%%%%%%%%%%%%%%%%%%%%%%%%%%%%%%%
\begin{proposition} \label{prop:CP:H2}
Let $f\in C^2(\R)$, $1<\alpha \le 2$ and $\abs{\theta}\leq \min\{\alpha,2-\alpha\} =2-\alpha$.
Suppose $M$ is an arbitrary positive constant and
 suppose $W_0\in H^2(\R)$ such that $\|W_0\|_{H^2} \le M$.
Then there exists a positive constant $T$, which depends on $M$, such that
 the Cauchy problem~\eqref{CP:W} has a unique mild solution %$W(\cdot,t) \in H^2(\R)$ for $t\in [0,T)$.
 $W \in C([0,T];H^2)$ with $\|W(t)\|_{H^2} \le 2M$ for $t \in [0,T]$.
\end{proposition}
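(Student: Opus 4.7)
The plan is to set up a Duhamel/mild formulation and run a Banach fixed-point argument on a ball in $C([0,T];H^2)$. Under $\abs{\theta}\le 2-\alpha$ and $\alpha>1$ one has $\cos(\theta\tfrac{\pi}{2})>0$, so the Fourier multiplier $|e^{t\RFsymbol(k)}|=\exp(-t\cos(\theta\tfrac{\pi}{2})\abs{k}^\alpha)\le 1$ shows that $\RieszFeller$ generates a $C_0$-semigroup $\{S(t)\}_{t\ge 0}$ of contractions on every $H^s(\R)$. Writing \eqref{CP:W} in mild form gives
\[
 W(t) \;=\; S(t)\, W_0 \;-\; \integrall{0}{t}{ S(t-\tau)\, N[W(\tau)] }{\tau},
\]
with nonlinearity $N[W] := f(\profile+\sdiff{W}{\xi})-f(\profile)-\wavespeed\,\sdiff{W}{\xi}$. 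I would look for a fixed point of the induced map $\Phi$ on the complete metric space $X_T:=\{W\in C([0,T];H^2):\ \sup_{t\in[0,T]}\norm{W(t)}_{H^2}\le 2M\}$ with its natural metric.

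Two estimates drive the argument. On the one hand, splitting $N[W] = [f'(\profile)-\wavespeed]\sdiff{W}{\xi}+R[W]$ with a quadratic remainder controlled via Taylor's theorem, using that $\profile\in C^2_b(\R)$ with $\profile'\in L^1(\R)\cap L^\infty(\R)$ (since $\profile$ is monotone with finite limits at $\pm\infty$) and the embedding $H^1(\R)\hookrightarrow L^\infty(\R)$, a Moser-type product estimate yields
\[
 \norm{N[W]}_{H^1}\le C(M)\,\norm{W}_{H^2}, \qquad \norm{N[W_1]-N[W_2]}_{H^1}\le C(M)\,\norm{W_1-W_2}_{H^2},
\]
for $W, W_1, W_2 \in X_T$. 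On the other hand, the Fourier-side bound $\sup_{k\in\R}(1+k^2)\, e^{-2t\cos(\theta\pi/2)\abs{k}^\alpha}\le C(1+t^{-2/\alpha})$ gives the smoothing estimate $\norm{S(t)v}_{H^2}\le C(1+t^{-1/\alpha})\norm{v}_{H^1}$.

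Combining these, for $W\in X_T$,
\[
 \norm{\Phi(W)(t)}_{H^2} \le \norm{W_0}_{H^2} + C(M)\,\integrall{0}{t}{ \bigl(1+(t-\tau)^{-1/\alpha}\bigr) }{\tau}\cdot 2M \le M + C(M)\bigl(T+T^{\,1-1/\alpha}\bigr)\cdot 2M,
\]
with an analogous Lipschitz bound for $\Phi(W_1)-\Phi(W_2)$ in $C([0,T];H^2)$. Since $\alpha>1$ makes $1-1/\alpha>0$ and $(t-\tau)^{-1/\alpha}$ locally integrable, the correction term vanishes as $T\to 0$, so for $T=T(M)>0$ small enough $\Phi$ maps $X_T$ into itself and is a strict contraction. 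Banach's fixed-point theorem then yields the unique mild solution $W\in C([0,T];H^2)$ satisfying $\norm{W(t)}_{H^2}\le 2M$.

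The main obstacle is the nonlinear $H^1$-bound on $N[W]$: the background $\profile$ does not decay at infinity, so $f'(\profile)-\wavespeed$ is merely bounded (not integrable), and one must exploit the factor $\sdiff{W}{\xi}\in H^1$ together with integrability of $\profile'$ to close the estimate without losing more than one derivative. This one lost derivative is then exactly compensated by the $t^{-1/\alpha}$ smoothing of $S(t)$, which is integrable precisely because $\alpha>1$ — so the standing assumption $\alpha>1$ is essential, as is the smoothness and boundedness of the profile $\profile$ (inherited from the existence theory for traveling waves).
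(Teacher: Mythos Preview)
Your approach is essentially the same as the paper's: both set up the Duhamel formulation, run a Banach fixed-point argument on the ball of radius $2M$ in $C([0,T];H^2)$, control the nonlinearity with one lost derivative (the paper does this level-by-level, estimating $F$ and $\partial_\xi F$ in $L^2$, which is exactly your $H^1$ bound on $N[W]$), and recover that derivative via the $t^{-1/\alpha}$ smoothing of the semigroup, integrable precisely because $\alpha>1$. The paper additionally spells out the continuity in time of the Duhamel map via the strong continuity of the semigroup and a telescoping identity (their Lemma~\ref{lem:conti} and \eqref{conti_G}), which you leave implicit but is routine.
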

%%%%%%%%%%%%%%%%%%%%%%%%%%%%%%%%%%%%%%

To prove Proposition~\ref{prop:CP:H2}, we first present some properties 
of the fundamental solution of $\sdiff{u}{t}= \RieszFeller u$.

%%%%%%%%%%%%%%%%%%%%%%%%%%%%%%%%%%%%%%
\begin{lemma}[{\cite[Lemma 2.1]{Achleitner+Kuehn:2015}}] \label{lem:SSPM} % L\'evy strictly stable probability measures
For $1<\alpha\leq 2$ and $\abs{\theta} \leq \min\{\alpha,2-\alpha\} =2-\alpha$,
 $\Green(x,t) := \mathcal{F}^{-1}[e^{t \psi^\alpha_\theta (\cdot)}](x)$ with $\psi^\alpha_\theta$ defined in~\eqref{eq:RF:symbol}
 % is the probability measure of a L\'{e}vy strictly $\alpha$-stable distribution.
 is the fundamental solution of $\sdiff{u}{t} =\RieszFeller u$.
Moreover, $\Green$ satisfies for all $(x,t)\in \R \times (0,\infty)$ the properties
  \begin{enumerate}[label=(G\arabic*)] 
    \item \label{K:SFDE:prop0} $\Green(x,t)\geq 0$,
    \item \label{K:SFDE:prop1} $\Green(x,t)=t^{-1/\alpha} \Green (x t^{-1/\alpha},1)$, 
    \item \label{K:SFDE:prop2} $\norm{\Green(\cdot,t)}_{L^1(\R)}=1$,
    \item \label{K:SFDE:prop3} $\Green(\cdot,s)\ast \Green(\cdot,t) = \Green(\cdot,s+t)$ for all $s,t\in(0,\infty)$,
    \item \label{K:SFDE:prop4} $\norm{\Green(\cdot,t)}_{L^p(\R)}\leq \norm{\Green(\cdot,1)}_{L^p(\R)} 
    t^{-\frac1\alpha (1-\frac1p)}$ for all $1\leq p <\infty$,
    \item \label{K:SFDE:prop5} $\Green\in C^\infty_0(\R\times (0,\infty))$,
%  \end{enumerate}
%Moreover, for $1\leq \alpha\leq 2$, $\abs{\theta} \leq \min\{\alpha,2-\alpha\}$ and $\abs{\theta}<1$, 
%  \begin{enumerate}[label=(G\arabic*)] \setcounter{enumi}{6}
%    \item \label{K:SFDE:prop6} For all $m\geq 0$ there exists a constant $B_m\in(0,\infty)$ such that 
%      \begin{equation}
%      \label{eq:prop6} 
%	  \Abs{\sdifff{}{x}{m} \Green(x,t)} \leq t^{-(1+m)/\alpha} \frac{B_m}{1+t^{-2/\alpha} x^2} \,, 
%          \qquad \forall (x,t)\in \R\times (0,\infty) \,.
%      \end{equation}
   \item \label{K:SFDE:prop7} For all $t>0$, there exists a constant $\cK$ such that $\norm{\sdiff{G}{x} (\cdot,t)}_{L^1(\R)} \leq \cK t^{-1/\alpha}$.
%   \item \label{K:SFDE:prop8} $\Green(\cdot,s)\ast \sdiff{\Green}{x} (\cdot,t) = 
%   \sdiff{\Green}{x} (\cdot,s+t)$ for all $s,t\in(0,\infty)$.
%   \item \label{K:SFDE:prop9} For all $t>t_0>0$ and $u\in L^1(\R)$ we have $(\Green(\cdot,t)\ast u)\in C^\infty(\R)$.
  \end{enumerate}
%  For $0<\alpha\leq 2$ and $\abs{\theta} \leq \min\{\alpha,2-\alpha\}$ and $\alpha\neq \pm \theta$ ({i.e.}~excluding the so-called extremal pdfs)
%    \begin{enumerate}[label=(G\arabic*)] \setcounter{enumi}{10}
%    \item \label{K:SFDE:prop10} $\Green(x,t)> 0$.
%  \end{enumerate}
\end{lemma}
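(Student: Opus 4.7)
The plan is to exploit the Fourier-space factorization $\widehat{\Green}(k,t) = e^{t\RFsymbol(k)}$ and the L\'evy-stable-distribution interpretation of the semigroup it generates. The admissibility condition $|\theta|\le 2-\alpha$ with $1<\alpha\le 2$ forces $\cos(\theta\tfrac\pi2)>0$, so $\mathrm{Re}(\RFsymbol(k)) = -|k|^\alpha\cos(\theta\tfrac\pi2) < 0$ for $k\ne 0$, and $\RFsymbol$ qualifies as a L\'evy--Khintchine exponent. Hence $e^{t\RFsymbol}$ is the characteristic function of an $\alpha$-stable probability distribution, which immediately yields \ref{K:SFDE:prop0}. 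Using $\RFsymbol(0)=0$ together with the positivity just obtained, we then get \ref{K:SFDE:prop2}, since $\norm{\Green(\cdot,t)}_{L^1} = \widehat{\Green}(0,t) = e^{t\RFsymbol(0)} = 1$.

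Next I would use the homogeneity $t\RFsymbol(k) = \RFsymbol(t^{1/\alpha}k)$: inserting it into the Fourier-inversion integral and substituting $k\mapsto t^{-1/\alpha}k$ gives \ref{K:SFDE:prop1}. The semigroup property \ref{K:SFDE:prop3} is immediate from $e^{s\RFsymbol}e^{t\RFsymbol} = e^{(s+t)\RFsymbol}$ via the convolution theorem. For \ref{K:SFDE:prop4} I would take the $L^p$-norm of the scaling identity \ref{K:SFDE:prop1} and change variables, producing the advertised factor $t^{-\frac{1}{\alpha}(1-\frac{1}{p})}$. Property \ref{K:SFDE:prop7} follows by the same scaling after one $x$-derivative: \ref{K:SFDE:prop1} yields $\sdiff{\Green}{x}(x,t) = t^{-2/\alpha}\,\sdiff{\Green}{y}(xt^{-1/\alpha},1)$, whose $L^1$-norm equals $t^{-1/\alpha}\norm{\sdiff{\Green}{x}(\cdot,1)}_{L^1}$, so one may take $\cK := \norm{\sdiff{\Green}{x}(\cdot,1)}_{L^1}$.

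For \ref{K:SFDE:prop5} the driving observation is $|e^{t\RFsymbol(k)}| = e^{-t|k|^\alpha\cos(\theta\pi/2)}$: for every fixed $t>0$ this places $k^n\,e^{t\RFsymbol(k)}$ in $L^1_k$ for every $n\in\N$, so Fourier inversion permits arbitrary $x$-differentiation under the integral, giving $\Green(\cdot,t)\in C^\infty$, and Riemann--Lebesgue gives $\Green(\cdot,t)\in C_0$. Joint $C^\infty$ regularity on $\R\times(0,\infty)$ follows because $\partial_t$ merely multiplies the $k$-integrand by powers of $\RFsymbol(k)$, which do not spoil the super-exponential decay. The finiteness of $\cK$ needed in \ref{K:SFDE:prop7} is then transparent: the Fourier transform $ik\,e^{\RFsymbol(k)}$ of $\sdiff{\Green}{x}(\cdot,1)$ is smooth and super-exponentially decaying away from $k=0$, and the only obstruction to Schwartz decay in $x$ is the non-smoothness of $e^{\RFsymbol(k)}$ at the origin; the standard stable-density tail estimate $|\sdiff{\Green}{x}(x,1)|\lesssim (1+|x|)^{-(2+\alpha)}$ closes the argument.

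The only step that is not a one-line manipulation is \ref{K:SFDE:prop0}: positivity of $\Green(\cdot,t)$ cannot be read off the Fourier representation directly and really requires identifying $e^{t\RFsymbol}$ as the characteristic function of a probability measure. The admissibility range $|\theta|\le\min\{\alpha,2-\alpha\}$ is exactly Feller's condition in the classification of $\alpha$-stable laws, so this is a standard invocation rather than a fresh calculation---and indeed the lemma is stated as \cite[Lemma 2.1]{Achleitner+Kuehn:2015}, where the verification is carried out in detail.
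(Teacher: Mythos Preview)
Your proposal is correct and follows the standard route. Note, however, that the paper does not actually prove this lemma: it is quoted verbatim as \cite[Lemma~2.1]{Achleitner+Kuehn:2015} and invoked without further argument, so there is no in-paper proof to compare against. Your sketch---scaling of the symbol for \ref{K:SFDE:prop1}, \ref{K:SFDE:prop4}, \ref{K:SFDE:prop7}; the convolution theorem for \ref{K:SFDE:prop3}; Fourier decay for \ref{K:SFDE:prop5}; and the L\'evy--Khintchine/Feller identification for \ref{K:SFDE:prop0} and hence \ref{K:SFDE:prop2}---is exactly the argument one finds in the cited reference.
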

%%%%%%%%%%%%%%%%%%%%%%%%%%%%%%%%%%%%%%

Due to the properties of $\Green$,
 it is easy to show that $\RieszFeller$ generates a semigroup.
%%%%%%%%%%%%%%%%%%%%%%%%%%%%%%%%%%%%%%
 \begin{lemma} \label{lem:SFDE:semigroup}
  For $1<\alpha\leq 2$, $\abs{\theta} \leq \min\{\alpha,2-\alpha\} =2-\alpha$,
  the Riesz-Feller operator~$\RieszFeller$ generates a strongly continuous, convolution semigroup 
  \[
   S_t: L^p(\R) \to L^p(\R)\,, \quad u_0 \mapsto S_t u_0 = \Green(\cdot,t)\ast u_0  
  \]
	with $\Green$ defined in Lemma~\ref{lem:SSPM}.
  Moreover, the semigroup satisfies the dispersion property for $u\in L^1(\R)$
  \begin{equation} \label{prop:SG:dispersion}
    \norm{S_t u}_{L^p(\R)} \leq C_p\; t^{-\frac1\alpha (1-\frac1p)} \norm{u}_{L^1(\R)} \quad 
  \end{equation}
  for all $1\leq p<\infty$ and some $C_p>0$.
 \end{lemma}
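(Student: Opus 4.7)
The plan is to construct the semigroup directly from the kernel $\Green$ and verify the four defining properties (well-definedness on $L^p$, semigroup law, strong continuity, identification of the generator), using properties \ref{K:SFDE:prop0}--\ref{K:SFDE:prop7} from Lemma~\ref{lem:SSPM} as black boxes. Boundedness of $S_t$ on $L^p(\R)$ is immediate from Young's convolution inequality together with \ref{K:SFDE:prop2}: $\|S_t u\|_{L^p} \le \|\Green(\cdot,t)\|_{L^1} \|u\|_{L^p} = \|u\|_{L^p}$, so each $S_t$ is a (sub-Markovian) contraction. The semigroup identity $S_{s+t} = S_s S_t$ is a direct consequence of \ref{K:SFDE:prop3}: $S_s(S_t u) = \Green(\cdot,s) \ast \Green(\cdot,t) \ast u = \Green(\cdot,s+t) \ast u$, where we use associativity of convolution (justified by Fubini together with the $L^1$ bound on $\Green$).

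For strong continuity on $L^p(\R)$ with $1\le p<\infty$, I would use the scaling identity \ref{K:SFDE:prop1} to rewrite $\Green(\cdot,t) = t^{-1/\alpha} \Green(t^{-1/\alpha}\,\cdot,1)$, so $\{\Green(\cdot,t)\}_{t>0}$ is an approximation of the identity in the usual sense: the family is nonnegative (\ref{K:SFDE:prop0}), has mass one (\ref{K:SFDE:prop2}), and concentrates at the origin as $t\to 0^+$ (by \ref{K:SFDE:prop4} the tails $\int_{|x|>\delta} \Green(x,t)\,\mathrm d x\to 0$, or by a direct dilation argument). Then $\|S_t u - u\|_{L^p}\to 0$ follows from the standard approximation of identity argument in $L^p$ for $1\le p<\infty$, first for $u\in \Cc^\infty(\R)$ by uniform continuity and then extending by density using the uniform contraction bound.

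To identify the generator with $\RieszFeller$, I would work on the Schwartz space $\SchwartzTF(\R)$, which is invariant under $S_t$ by~\eqref{eq:RF:Fourier}. For $u\in\SchwartzTF$, the Fourier transform turns the semigroup into pointwise multiplication, $\Fourier[S_t u](k)=e^{t\RFsymbol(k)}\hat u(k)$, and the pointwise derivative at $t=0$ equals $\RFsymbol(k)\hat u(k)$ with the dominating function $|\RFsymbol(k)\hat u(k)|$ integrable since $\hat u$ is Schwartz and $|\RFsymbol(k)|\le |k|^\alpha$. Dominated convergence, followed by an inverse Fourier transform, yields $\lim_{t\to 0^+}(S_t u-u)/t = \RieszFeller u$ in $L^p$, which identifies the generator on a core. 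Finally, the dispersion estimate~\eqref{prop:SG:dispersion} is obtained by applying Young's inequality in the form $\|\Green(\cdot,t)\ast u\|_{L^p} \le \|\Green(\cdot,t)\|_{L^p}\|u\|_{L^1}$ and invoking \ref{K:SFDE:prop4} with $C_p := \|\Green(\cdot,1)\|_{L^p}$.

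The only genuinely delicate step is the strong continuity: the dilation $t^{-1/\alpha}\Green(t^{-1/\alpha}x,1)$ is an approximation of the identity only because $\Green(\cdot,1)\in L^1(\R)$ with unit mass and nonnegative, and the concentration-at-the-origin property must be justified from \ref{K:SFDE:prop1}--\ref{K:SFDE:prop2} rather than from any explicit formula for $\Green$; everything else reduces to cataloguing the kernel estimates already assembled in Lemma~\ref{lem:SSPM}.
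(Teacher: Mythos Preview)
Your proposal is correct and follows essentially the same route as the paper: boundedness via Young's inequality and \ref{K:SFDE:prop2}, the semigroup law via \ref{K:SFDE:prop3}, strong continuity via the approximation-of-identity argument driven by the scaling \ref{K:SFDE:prop1} (the paper simply cites a standard convolution result in Lieb--Loss), and the dispersion estimate via Young's inequality combined with \ref{K:SFDE:prop4}. In fact you supply more than the paper does: the paper's proof does not carry out the identification of the generator with $\RieszFeller$ on a core, whereas you sketch this via the Fourier side on $\SchwartzTF(\R)$.
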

%%%%%%%%%%%%%%%%%%%%%%%%%%%%%%%%%%%%%%

\begin{proof}
Due to~\ref{K:SFDE:prop2} and Young's inequality for convolutions,
 \[ \norm{S_t u}_{L^p} \leq \norm{\Green(\cdot ,t)}_{L^1} \norm{u}_{L^p} = \norm{u}_{L^p} \]
 for all $u\in L^p(\R^n)$.
Therefore $S_t:L^p(\R)\rightarrow L^p(\R)$ are well-defined bounded linear operators for all~$t\geq 0$. 
$\SG$ is a semigroup,
 since $S_{t+s}=S_t S_s$ for all $s,t\geq 0$ holds due to \ref{K:SFDE:prop3} and $S_0:=\Id$.
Strong continuity of~$\SG$
 follows from a standard result about convolutions~\cite[p.64]{Lieb+Loss:2001} and~\ref{K:SFDE:prop1}.
The dispersion property
 \[
   \forall 1\leq p<\infty \quad \exists C_p>0 \,: \quad 
   \norm{S_t u}_{L^p(\R)} \leq C_p\, t^{-\frac1\alpha (1-\frac1p)} \norm{u}_{L^1(\R)} \quad
   \forall u\in L^1(\R)
 \]
 can be proved using \ref{K:SFDE:prop4} and Young's inequality \cite[p.98-99]{Lieb+Loss:2001}.
\end{proof}

%%%%%%%%%%%%%%%%%%%%%%%%%%%%%%%%%%%%%%
\begin{lemma} \label{lem:ker}
Let $1<\alpha \le 2$ and $\abs{\theta}\leq \min\{\alpha,2-\alpha\}$. 
%Suppose that $\Green$ is the fundamental solution of the linear evolution equation $\sdiff{u}{t}= \RieszFeller u$.
%Namely $\Green(x,t) := \mathcal{F}^{-1}[e^{t \psi^\alpha_\theta (\cdot)}](x)$, where $\psi^\alpha_\theta$ is defined by \eqref{eq:RF:symbol}.
The fundamental solution~$\Green$ defined in Lemma~\ref{lem:SSPM} satisfies for all $\ell\in\N_0$ and $0 \le r \le \ell$ the following estimates:
\begin{equation} \label{ker_est} 
 \| \partial_x^{\ell} \big( \Green (t) \ast \phi\big)\|_{L^2} \le C t^{- (\ell -r)/\alpha} \|\partial_x^r \phi\|_{L^2}, \qquad t>0\,, 
%	\| \partial_x^{\ell} \Green (t) \ast \phi\|_{L^2} & \le \|\partial_x^\ell \phi\|_{L^2}  \label{ker_est2}
\end{equation}
where $C$ is a certain positive constant.
If $r = \ell$, then inequality \eqref{ker_est} with $C=1$ is optimal.
\end{lemma}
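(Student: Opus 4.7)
The plan is to work on the Fourier side via Plancherel, reducing the bound to a pointwise estimate for the multiplier $|k|^{\ell-r}\,|e^{t\psi_\theta^\alpha(k)}|$. By Lemma~\ref{lem:SSPM}, $\mathcal{F}[\partial_x^\ell(\Green(\cdot,t)\ast\phi)](k)=(\ii k)^\ell e^{t\psi_\theta^\alpha(k)}\widehat{\phi}(k)$, so Plancherel's identity gives
\[
\|\partial_x^\ell(\Green(t)\ast\phi)\|_{L^2}^2 = \frac{1}{2\pi}\int_\R |k|^{2(\ell-r)}\,|e^{t\psi_\theta^\alpha(k)}|^2\cdot |k|^{2r}\,|\widehat{\phi}(k)|^2\,dk,
\]
and a second application of Plancherel recognises the factor $|k|^{2r}|\widehat{\phi}|^2$ as producing $\|\partial_x^r\phi\|_{L^2}^2$ once the multiplier in front has been controlled.

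Next I would compute the modulus of the exponential directly from the symbol~\eqref{eq:RF:symbol}: $|e^{t\psi_\theta^\alpha(k)}|^2 = e^{-2t\,c_\theta\,|k|^\alpha}$, where $c_\theta:=\cos(\theta\tfrac{\pi}{2})$. The hypothesis $|\theta|\le\min\{\alpha,2-\alpha\}=2-\alpha$ combined with $1<\alpha\le 2$ forces $|\theta|<1$, hence $c_\theta>0$. The problem then reduces to the one-variable pointwise bound
\[
\sup_{k\in\R}\,|k|^{2(\ell-r)}\,e^{-2tc_\theta|k|^\alpha}\;\le\;C^2\,t^{-2(\ell-r)/\alpha},\qquad t>0,
\]
which follows from the substitution $y=tc_\theta|k|^\alpha$: the map $y\mapsto y^{2(\ell-r)/\alpha}e^{-2y}$ is bounded on $[0,\infty)$ because $\ell-r\ge 0$. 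Combining this with the Plancherel identity above yields the claimed estimate.

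For the optimality statement in the case $r=\ell$, the multiplier reduces to $|e^{t\psi_\theta^\alpha(k)}|\le 1$, so $C=1$ is admissible; since this factor tends to $1$ as $k\to 0$, no smaller constant would work, and one recovers equality in the limit by choosing $\widehat{\phi}$ concentrated near the origin.

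I do not anticipate a genuine obstacle: the argument is a standard Fourier multiplier estimate once the decay rate $e^{-c_\theta t|k|^\alpha}$ is isolated. The only step that requires care is the strict positivity of $c_\theta$, which is exactly what the skewness restriction $|\theta|\le 2-\alpha<1$ guarantees for $1<\alpha\le 2$; without it the Gaussian-type decay of the Fourier symbol would fail and the argument would collapse.
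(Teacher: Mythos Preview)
Your proof is correct and follows essentially the same route as the paper: both apply Plancherel, split off the factor $(ik)^r\widehat{\phi}$, and bound the remaining multiplier $|k|^{\ell-r}e^{-t\cos(\theta\pi/2)|k|^\alpha}$ uniformly in $k$ using the positivity of $\cos(\theta\pi/2)$. The only minor variation is in the case $r=\ell$: the paper obtains $C=1$ via Young's inequality and $\|\Green(\cdot,t)\|_{L^1}=1$, whereas you stay on the Fourier side with $|e^{t\psi_\theta^\alpha(k)}|\le 1$; your additional remark that this bound saturates as $k\to 0$ actually justifies the word ``optimal'' more explicitly than the paper does.
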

%%%%%%%%%%%%%%%%%%%%%%%%%%%%%%%%%%%%%%

\begin{proof} 
%We first derive the estimate \eqref{ker_est1}.
By using Plancherel's theorem, we compute that
\begin{multline*}
	\| \partial_x^{\ell} \big( \Green (t) \ast \phi\big)\|_{L^2}
	 = \|(\ii k)^{\ell} e^{t \psi^\alpha_\theta (k)} \hat{\phi}\|_{L^2} \\
	 \le \|(\ii k)^{\ell - r}e^{t \psi^\alpha_\theta (k)}\|_{L^\infty} \|(\ii k)^{r} \hat{\phi}\|_{L^2}
	 %\le \|G^\alpha_\theta\|_{L^1} \|\partial_x^{\ell}\phi\|_{L^2} 
	 \le C t^{- (\ell -r)/\alpha} \|\partial_x^{r}\phi\|_{L^2} ;
\end{multline*}
since
%\begin{equation*}
$\|(\ii k)^{\ell - r}e^{t \psi^\alpha_\theta (k)}\|_{L^\infty} 
	 = \sup_{k \in \mathbb{R}} |k|^{\ell -r}e^{-t |k|^\alpha \cos (\theta \pi/2)}
	 \le C t^{- (\ell -r)/\alpha}$,
%\end{equation*}
due to the positivity of $\cos (\theta \pi/2)$ under the assumption in Lemma~\ref{lem:ker}.
If $r = \ell$, then we obtain
$\| \partial_x^{\ell} \big( \Green (t) \ast \phi\big)\|_{L^2} 
 \leq \|G^\alpha_\theta\|_{L^1} \|\partial_x^{\ell}\phi\|_{L^2} 
 = \|\partial_x^{\ell}\phi\|_{L^2}$,
by using the fact that $\Green$ is a non-negative integrable function with mass one. 
\end{proof}

%%%%%%%%%%%%%%%%%%%%%%%%%%%%%%%%%%%%%%
\begin{lemma} \label{lem:conti}
Suppose that the same assumption as in Lemma~\ref{lem:ker} holds, and $\phi \in H^\sigma$ for $\sigma \ge 0$. 
Then the fundamental solution %$G_\theta^\alpha$ 
satisfies
$\Green \ast \phi \in C([0,\infty);H^\sigma)$.
% \begin{equation}
%$\lim_{t\searrow 0} \Green(t)\ast \phi = \phi$
%\end{equation}
%in $H^{s}$.
\end{lemma}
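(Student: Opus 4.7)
The plan is to prove the continuity statement via Plancherel's theorem on the Fourier side, reducing it to a dominated convergence argument. Since $\phi\in H^\sigma(\R)$ is equivalent to $(1+|k|^2)^{\sigma/2}\hat\phi\in L^2(\R)$, and since the semigroup acts as a Fourier multiplier $\widehat{S_t\phi}(k) = e^{t\psi^\alpha_\theta(k)}\hat\phi(k)$, I would write
\begin{equation*}
\|S_t\phi - S_{t_0}\phi\|_{H^\sigma}^2
  = \frac{1}{2\pi}\int_\R (1+|k|^2)^\sigma \bigl|e^{t\psi^\alpha_\theta(k)} - e^{t_0\psi^\alpha_\theta(k)}\bigr|^2 \,|\hat\phi(k)|^2 \, dk
\end{equation*}
for any $t_0\ge 0$ and any $t\ge 0$.

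The key observation is that under the standing assumption $|\theta|\leq 2-\alpha$ with $1<\alpha\leq 2$, one has $\cos(\theta\pi/2)>0$, so that $\mathrm{Re}\,\psi^\alpha_\theta(k) = -|k|^\alpha\cos(\theta\pi/2)\leq 0$ for all $k\in\R$. Hence $|e^{t\psi^\alpha_\theta(k)}|\leq 1$ uniformly in $t\geq 0$, which yields the pointwise domination
\begin{equation*}
 (1+|k|^2)^\sigma \bigl|e^{t\psi^\alpha_\theta(k)} - e^{t_0\psi^\alpha_\theta(k)}\bigr|^2 \,|\hat\phi(k)|^2
 \leq 4(1+|k|^2)^\sigma |\hat\phi(k)|^2,
\end{equation*}
and the right-hand side is in $L^1(\R)$ because $\phi\in H^\sigma$. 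Meanwhile, by continuity of the exponential, the integrand tends to $0$ pointwise as $t\to t_0$. The dominated convergence theorem then gives $\|S_t\phi - S_{t_0}\phi\|_{H^\sigma}\to 0$, which is the desired continuity at the arbitrary point $t_0\ge 0$.

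An equivalent, slightly more structural presentation I might use is to note that $S_t$ commutes with the Bessel potential $(1-\partial_x^2)^{\sigma/2}$ (since both are Fourier multipliers), so that $\|S_t\phi\|_{H^\sigma} = \|S_t\psi\|_{L^2}$ with $\psi := (1-\partial_x^2)^{\sigma/2}\phi\in L^2$; continuity in $H^\sigma$ then reduces to the strong continuity of $\SG$ on $L^2$ already established in Lemma~\ref{lem:SFDE:semigroup}, combined with the uniform bound $\|S_t\|_{L^2\to L^2}\leq 1$ to handle $t_0>0$ via the semigroup identity $S_t - S_{t_0} = S_{\min(t,t_0)}(S_{|t-t_0|}-\Id)$ (up to a sign). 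There is no real obstacle here; the only point requiring attention is the uniform $L^\infty$ bound on the Fourier multiplier, which follows from the sign of $\cos(\theta\pi/2)$ under the hypothesis of Lemma~\ref{lem:ker}, mirroring the argument used in its proof.
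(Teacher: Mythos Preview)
Your proposal is correct and follows essentially the same approach as the paper: write the $H^\sigma$-difference on the Fourier side via Plancherel, use $|e^{t\psi^\alpha_\theta(k)}|\leq 1$ (from $\cos(\theta\pi/2)>0$) to get the uniform dominating function $4(1+|k|^2)^\sigma|\hat\phi(k)|^2\in L^1$, and conclude by dominated convergence. Your write-up is in fact more explicit than the paper's on why the multiplier is bounded, and the alternative Bessel-potential reduction you sketch is a correct repackaging of the same idea.
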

%%%%%%%%%%%%%%%%%%%%%%%%%%%%%%%%%%%%%%

\begin{proof} 
For arbitrary constants $t_1, t_2 \in [0,\infty)$, 
we have
$$
  \norm{\Green(t_1)\ast \phi - \Green(t_2) \ast \phi}^2_{H^\sigma}
     \le \integral{\R}{ (1+|k|)^{2\sigma}|e^{t_1\RFsymbol(k)} - e^{t_2\RFsymbol(k)}|^2|\hat{\phi}(k)|^2 }{k},
$$
where the integral is bounded by $4 \|\phi\|^2_{H^\sigma}$.
Thus, the Dominated Convergence Theorem allows to pass to the limit under the integral sign,
 which completes the proof.
\end{proof}

\bigskip
\begin{proof}[Proof of Proposition~\ref{prop:CP:H2}] 
% \noindent
% {\it Proof of Proposition~\ref{prop:CP:H2}.}
Using the fundamental solution~$\Green$ of the linear evolution equation $\sdiff{u}{t}= \RieszFeller u$,
% is a strictly $\alpha$-stable probability distribution,
% i.e. $\Green$ is a non-negative integrable function with mass one,  see also Appendix~\ref{section:CP}.
the mild formulation of~\eqref{CP:W} reads
\begin{equation} \label{mildform:W}
 W(t) = \Green(t)\ast W_0 
  - \integrall{0}{t}{ \Green(t-\tau)\ast F(\profile,\sdiff{W}{\xi}) }{\tau},
\end{equation} 
where $F(\profile,\sdiff{W}{\xi}) := f(\profile+\sdiff{W}{\xi})-f(\profile)-\wavespeed \sdiff{W}{\xi}$.
To employ a fix point argument,
 we consider the mapping $\operator{G}[W]$ defined by
 \begin{equation}\label{map:G}
 \operator{G}[W](t) := \Green(t)\ast W_0 
  - \integrall{0}{t}{ \Green(t-\tau)\ast F(\profile,\sdiff{W}{\xi})}{\tau},
 \end{equation}
 on the Banach space $X:= C([0,T];H^2)$ 
 with norm $\norm{W}_X:=\sup_{t\in[0,T]} \norm{W(t)}_{H^2}$. 
Then we show that $\operator{G}$ %:X\to X$ 
 is a contraction mapping on a closed convex subset $S_R$ of $X$,
 where $S_R := \{W \in X ; \|W\|_X \le R\}$ for some parameter $R > 0$ which will be determined later.

%Denoting the right hand side of~\eqref{mildform:W} with $\operator{G}W$,
% the mild formulation can be considered as a fixed point problem $W=\operator{G}W$
% in the Banach space $X:= C([0,T];H^2(\R))$ with norm $\norm{W}_X:=\sup_{t\in[0,T]} \norm{W(\cdot,t)}_{H^2}$. 
  
%To this end, we first estimate $\operator{G}[W]$.

%For any given positive number $M$, 
Due to a Sobolev embedding, $\|W\|_{X} \le R$ implies that $\|W(t)\|_{W^{1,\infty}} \le R$ for $t \in [0,T]$.
Thus, if $\|W\|_{X} \le R$ and $\ell = 0, 1$, then we compute that 
 \begin{equation*}
 \begin{split}
  \|\partial_\xi^{\ell}(\operator{G}[W] &- \operator{G}[V])(t)\|_{L^2} \\
&\le \integrall{0}{t}{ \|\partial_\xi^{\ell} \Green(t-\tau)\ast \{F(\profile,\sdiff{W}{\xi}) - F(\profile,\sdiff{V}{\xi})\}\|_{L^2} }{\tau}  \\
&\le C \integrall{0}{t}{  (t-\tau)^{-\ell/\alpha}\|\{F(\profile,\sdiff{W}{\xi}) - F(\profile,\sdiff{V}{\xi})\}(\tau)\|_{L^2} }{\tau}  \\
&\le C(C(R) + |s|) \integrall{0}{t}{ (t-\tau)^{-\ell/\alpha}\|\partial_\xi(W-V)(\tau)\|_{L^2} }{\tau} \\
&\le C_\ell(R)\ t^{1-\ell/\alpha}\ \|W-V\|_{X}
 \end{split}
 \end{equation*}
 where we used Lemma~\ref{lem:ker} and the identity
 \begin{multline*}
  F(\profile,\sdiff{W}{\xi}) - F(\profile,\sdiff{V}{\xi}) 
  = f(\profile + \sdiff{W}{\xi}) - f(\profile + \sdiff{V}{\xi})  -s \partial_\xi(W - V) \\
  = \integrall{0}{1}{\big[f'(\profile + \sigma \partial_\xi W +(1-\sigma)\partial_\xi V))-s\big]\, \partial_\xi(W - V) }{\sigma}.
 \end{multline*}
Similarly, we can calculate that
 \begin{equation*}
 \begin{split}
\|\partial_\xi^2 (\operator{G}[W] &- \operator{G}[V])(t)\|_{L^2} \\
&\le \integrall{0}{t}{ \|\partial_\xi \Green(t-\tau)\ast \partial_\xi \{F(\profile,\sdiff{W}{\xi}) - F(\profile,\sdiff{V}{\xi})\}\|_{L^2} }{\tau}  \\
&\le C \integrall{0}{t}{ (t-\tau)^{-1/\alpha}\|\partial_\xi \{F(\profile,\sdiff{W}{\xi}) - F(\profile,\sdiff{V}{\xi})\}(\tau)\|_{L^2} }{\tau}  \\
&\le C(C(R) + |s|) \integrall{0}{t}{ (t-\tau)^{-1/\alpha} \| (W-V)(\tau)\|_{H^2} }{\tau}  \\
&\le C_2(R)\ t^{1-1/\alpha}\ \|W-V\|_{X}.
 \end{split}
 \end{equation*}
Combining the above estimates, we obtain 
 \begin{equation*}
 \begin{split}
  \|\operator{G}[W] - \operator{G}[V]\|_{X} 
\le \{C_0(R)T^{1/\alpha} + C_1(R) + C_2(R)\}T^{1 - 1/\alpha} \|W-V\|_{X}.
 \end{split}
 \end{equation*}
Therefore, letting $T = \min \{ 1, (2C_*(R))^{-\alpha/(\alpha-1)} \}$, we deduce
 \begin{equation}\label{W-V}
  \|\operator{G}[W] - \operator{G}[V]\|_{X} \le \frac12 \|W-V\|_{X},
 \end{equation}
 where $C_*(R) := C_0(R) + C_1(R) + C_2(R)$.
 On the other hand, letting $V \equiv 0$ in \eqref{W-V}, we get
 \begin{equation*}%\label{W-V}
  \|\operator{G}[W] \|_{X} \le  \|\operator{G}[0]\|_X +  \frac12 \|W\|_{X}
  \le  \|W_0\|_{H^2} +  \frac12 \|W\|_{X}
  \le  M +  \frac12 R,
 \end{equation*}
 where we used \eqref{ker_est} with $\ell = r$.
 Therefore, choosing $R = 2M$, we obtain 
 $\|\operator{G}[W] \|_{X} \le  2 M$.
 
Finally we discuss the continuity of $\operator{G} [W]$ in time $t$.
% In summary, for all $t\in[0,T]$ and $W\in X$, we deduce $\operator{G} W(\cdot,t)\in H^2(\R)$.
% The continuity of $\operator{G} W$ in $t$
It follows from the continuity at time~$0$
 and the semigroup property~\ref{K:SFDE:prop3} of $\Green$.
% {i.e.} $\Green(\cdot,s)\ast \Green(\cdot,t) = \Green(\cdot,s+t)$ for all $s,t\in(0,\infty)$:
Due to Lemma~\ref{lem:conti}, for $W_0\in H^\sigma(\R)$ with $\sigma\geq 0$,
 the convergence $\lim_{t\searrow 0} \Green(\cdot,t)\ast W_0 = W_0$ in $H^\sigma$ holds. 
% This statement follows from 
%  \[ \norm{\Green(\cdot,t)\ast W_0-W_0 }^2_{H^\sigma}
%      = \integral{\R}{ (1+|k|)^{2\sigma}|e^{t\RFsymbol(k)}-1|^2|\Fourier W_0(k)|^2 }{k}\,,
%  \]
%  where the integral is bounded by $4\norm{W_0}^2_{H^\sigma}$
%  and the Dominated Convergence Theorem allows to pass to the limit under the integral sign. 
Moreover, for $t\in[0,T]$ and $s\geq 0$ the identity
 \begin{align*} 
  \operator{G} [W](s+t)
  &= \Green(\cdot,s+t)\ast W_0 (x)
    - \integrall{0}{s+t}{ \Green(\cdot,s+t-\tau)\ast F(\profile,\sdiff{W}{\xi}(\tau)) }{\tau} \\
  &= \Green(\cdot,s)\ast \BiggPar{ \operator{G} [W](t)
    - \integrall{t}{s+t}{ \Green(\cdot,t-\tau)\ast F(\profile,\sdiff{W}{\xi}(\tau)) }{\tau} }
 \end{align*}
 holds, where the last integral converges to zero for $s\to 0$.
Thus, for~$t_1, t_2 \in [0,T]$ with $t_1<t_2$ (without loss of generality), we have 
\begin{multline} \label{conti_G}
 \operator{G}[W](t_1) - \operator{G}[W](t_2)
   = \operator{G}[W](t_1) - \operator{G}[W]((t_2-t_1) +t_1) \\
   = \operator{G}[W](t_1) - \Green(\cdot,t_2 -t_1)\ast \BiggPar{ \operator{G} [W](t_1)
    - \integrall{t_1}{t_2}{ \Green(\cdot,t_1-\tau)\ast F(\profile,\sdiff{W}{\xi}(\tau)) }{\tau} }.
\end{multline}
%  \begin{equation}\label{conti_G}
%  	\begin{split} 
%   \operator{G}[W](t_1) - \operator{G}[W](t_2)
%   &= \{ \Green(t_1) - \Green(t_2) \} \ast W_0 
%     - \integrall{t_2}{t_1}{ \Green(t_1 - \tau)\ast F(\profile,\sdiff{W}{\xi}) (\tau) }{\tau} \\
%   &\quad  - \integrall{0}{t_2}{ \{ \Green(t_1 - \tau) - \Green(t_2 - \tau)\}\ast F(\profile,\sdiff{W}{\xi}) (\tau) }{\tau} .
% 	\end{split}
%  \end{equation}
Therefore, by the fact that $W_0 \in H^2$, $W \in X$ and Lemma~\ref{lem:conti}, 
we find that the right hand side of \eqref{conti_G} tends to zero in $H^2$ as $t_1 \to t_2$.
Hence, we deduce the continuity of $\operator{G}[W]$ in $t$  
 and that $\operator{G}[W] \in S_{2M}$ for $W \in S_{2M}$.
%
%The continuity of $\operator{G} W$ in $t$ follows from the continuity at time $0$
% and the semigroup property of $\Green$,
% {i.e.} $\Green(\cdot,s)\ast \Green(\cdot,t) = \Green(\cdot,s+t)$ for all $s,t\in(0,\infty)$:
%In particular, for $W_0\in H^\sigma(\R)$ with $\sigma\geq 0$,
% the convergence $\lim_{t\searrow 0} \Green(\cdot,t)\ast W_0 = W_0$ in $H^\sigma$ holds. 
%This statement follows from 
% \[ \norm{\Green(\cdot,t)\ast W_0-W_0 }^2_{H^\sigma}
%     = \integral{\R}{ (1+|k|)^{2\sigma}|e^{t\RFsymbol(k)}-1|^2|\Fourier W_0(k)|^2 }{k}\,,
%\]
% where the integral is bounded by $4\norm{W_0}^2_{H^\sigma}$
% and the Dominated Convergence Theorem allows to pass to the limit under the integral sign. 
%Then the continuity in $t$ follows from the identity
% \begin{align*} 
%  \operator{G} W(\cdot,s+t)
%  &= \Green(\cdot,s+t)\ast W_0 (x)
%    - \integrall{0}{s+t}{ \Green(\cdot,s+t-\tau)\ast \left(F(\profile,\sdiff{W}{\xi}) + \tilde f'(\profile)\sdiff{W}{\xi}\right)(x) }{\tau} \\
%%   &= \Green(\cdot,s)\ast \operator{G} W(\cdot,t)
%%     - \integrall{t}{s+t}{ \Green(\cdot,s+t-\tau)\ast \left(F(\profile,\sdiff{W}{\xi}) + \tilde f'(\profile)\sdiff{W}{\xi}\right)(x) }{\tau} .
% &= \Green(\cdot,s)\ast \BiggPar{ \operator{G} W(\cdot,t)
%    - \integrall{t}{s+t}{ \Green(\cdot,t-\tau)\ast \left(F(\profile,\sdiff{W}{\xi}) + \tilde f'(\profile)\sdiff{W}{\xi}\right)(x) }{\tau} }
% \end{align*}
% and the last integral converging to zero for $s\searrow 0$.

Consequently, we conclude that there exist $T=T(M)$ such that $\operator{G}$ %:X\to X$ 
 is a contraction mapping of $S_{2M}$.
This means that the mapping $\operator{G}$ admits a unique fixed point $W$ in $S_{2M}$,
such that $W = \operator{G}[W]$. 
Hence the proof of Proposition~\ref{prop:CP:H2} is complete.
%\qed
\end{proof}

%By Lemma~\ref{lem:ker}, we have 
%\begin{equation}\label{G0}
%\|\Green(t)\ast W_0\|_{H^2} \leq \|W_0\|_{H^2}.
%\end{equation}  
%
%To make sure that the mild solution is unique in all of $X$, we argue by contradicition. 
%Assume that $W,V\in C([0,T],H^2)$ are two solutions of~\eqref{mildform:W} and $M=\max\{\norm{W}_X\,,\,\norm{V}_X\}$. 
%Then $W-V$ solves a fixed point equation,
% where for a small enough $T_0>0$ the fixed point operator is again a contraction on $B_{2M}(T_0)$. 
%Therefore $W=V$ on $[0,T_0]$ and iterating this argument provides uniqueness on the whole time interval of existence.  

%%%%%%%%%%%%%%%%%%%%%
%%%%%%%    Section 3   %%%%%%%
%%%%%%%%%%%%%%%%%%%%%

\section{Asymptotic stability of traveling waves} \label{sec:AS}

In this section, we consider the asymptotic stability of traveling wave solutions with monotone decreasing profile in~\eqref{eq:FCL}.
To this end we derive the existence of global-in-time solutions for evolution equation~\eqref{CP:W}
 and that these perturbations decay. 
Precisely we prove the following theorem.

%%%%%%%%%%%%%%%%%%%%%%%%%%%%%%%%%%%%%%%%%%%%
\begin{theorem}\label{theorem:ASW}
Suppose that the same assumptions as in Theorem~\ref{theorem:AS} hold.
%Assume that $1<\alpha<2$ and $\theta\leq\min\{\alpha,2-\alpha\}$, and the flux function $f$ be convex.
%Let $\profile(x- \wavespeed t)$ be a shifted traveling wave solution of~\eqref{eq:FCL}
%with monotone decreasing. % profile~$\profile$, 
%Suppose that the initial data of \eqref{CP:W} satisfy $W_0(\xi)=\int_{-\infty}^{\xi}(u_0(\eta)-\profile(\eta))}{\eta}$ and $W_0 \in H^2$.
%Then there exists a positive constant $\delta_0$ such that if $\norm{W_0}_{H^2} \le \delta_0$,
Then the Cauchy problem \eqref{CP:W}
%for equation~\eqref{eq:FCL} with initial datum~$u_0$ 
has a unique global solution $W(\xi,t)$ satisfying $W \in C([0,\infty);H^2) \cap C^1([0,\infty);H^1)$ and
\begin{equation}\label{energy_est}
  \|W(t)\|_{H^2}^2 
+ C \sum_{\ell = 0}^2 \integrall{0}{t}{ \|W(\tau)\|^2_{\dot{H}^{\alpha/2 + \ell}} }{\tau}
    - \integrall{0}{t}{ \integral{\R}{ f''(\profile) \profile' W^2 }{\xi} }{\tau}
      \le \|W_0\|_{H^2}^2
\end{equation}
for some positive constant $C$ and for all $t \ge 0$. 
Furthermore, the solution $W(\xi,t)$ converges to zero %the traveling wave 
in the sense that
\begin{equation}\label{asyW}
\|W(t)\|_{W^{1,\infty}} \longrightarrow 0  \qquad \text{for} \quad t \to \infty. 
\end{equation}
% \[ \lim_{t\rightarrow \infty}  \integrall{t}{\infty}{ \norm{u(\cdot,\tau)-\profile }^2_{H^1} }{\tau}=0 \,. \]
\end{theorem}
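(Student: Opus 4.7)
The plan is to invoke the local-in-time existence from Proposition~\ref{prop:CP:H2} and extend it globally via a standard continuation argument based on a uniform-in-time a priori estimate in $H^2$. Concretely, I would first derive the energy identity~\eqref{energy_est} on the existence interval under the a priori assumption that $\|W(t)\|_{H^2}$ stays below a fixed threshold; the smallness hypothesis $\|W_0\|_{H^2}\le\delta_0$ then bootstraps this into a uniform global bound. The claimed $C^1([0,\infty);H^1)$ regularity is read off a posteriori from~\eqref{CP:W}, expressing $\partial_t W$ in terms of spatial derivatives of $W$, once $W\in C([0,\infty);H^2)$ is in hand.

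\textbf{$L^2$ estimate and structural cancellation.} Multiplying~\eqref{CP:W} by $W$ and integrating over $\R$, Plancherel's identity together with the evenness of $|\hat W|^2$ for real $W$ yields $-\int_\R W\,\RieszFeller W\,d\xi = \cos(\theta\pi/2)\,\|W\|_{\dot H^{\alpha/2}}^2$, and $\cos(\theta\pi/2)>0$ since $|\theta|\le 2-\alpha<1$. For the nonlinear flux I would Taylor expand
\[
f(\profile+\sdiff{W}{\xi})-f(\profile)-\wavespeed\sdiff{W}{\xi}
=\bigl(f'(\profile)-\wavespeed\bigr)\sdiff{W}{\xi}
+\tfrac12 f''(\profile)\,(\sdiff{W}{\xi})^2 + R,
\]
with $R=O(|\partial_\xi W|^3)$. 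Integration by parts on the linear-in-$\partial_\xi W$ term, together with $\partial_\xi(f'(\profile)-\wavespeed)=f''(\profile)\profile'$, produces exactly $-\tfrac12\int f''(\profile)\profile'\,W^2\,d\xi$. This quantity is \emph{non-negative} by convexity of $f$ and monotonicity of $\profile$, and is precisely the structural contribution appearing on the left of~\eqref{energy_est}. The quadratic and cubic remainders are controlled by $C\|W\|_{H^2}$ times a combination of the dissipation and energy norms through the Sobolev embedding $H^2\hookrightarrow W^{1,\infty}$, and therefore absorbed under smallness of $\delta_0$.

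\textbf{Higher-order estimates and global solution.} For $\ell=1,2$ I would differentiate~\eqref{CP:W} in $\xi$, test against $\partial_\xi^\ell W$, and invoke the analogous identity $-\int \partial_\xi^\ell W\cdot\RieszFeller\partial_\xi^\ell W\,d\xi = \cos(\theta\pi/2)\,\|W\|_{\dot H^{\alpha/2+\ell}}^2$ to produce the three dissipative summands in~\eqref{energy_est}. Leibniz expansion of $\partial_\xi^\ell[f(\profile+\partial_\xi W)-f(\profile)-\wavespeed\partial_\xi W]$ yields commutator-type terms involving derivatives of $\profile$, of $f^{(k)}(\profile+\partial_\xi W)$, and of $W$; these are handled as in the $L^2$ step by $H^2\hookrightarrow W^{1,\infty}$ and the smallness of $\|W\|_{H^2}$, with the ``leading'' term of the form $-\tfrac12\int f''(\profile)\profile'(\partial_\xi^\ell W)^2\,d\xi$ again signed the right way. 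Summing the estimates and integrating in time produces~\eqref{energy_est}; combined with Proposition~\ref{prop:CP:H2}, the standard continuation argument gives $W\in C([0,\infty);H^2)$, and~\eqref{CP:W} then promotes this to $W\in C^1([0,\infty);H^1)$.

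\textbf{Decay and main obstacle.} From~\eqref{energy_est} with $\ell=1$ I obtain $\int_0^\infty\|W(\tau)\|_{\dot H^{\alpha/2+1}}^2\,d\tau<\infty$ together with a uniform $H^2$-bound. To upgrade to $\|W(t)\|_{\dot H^{\alpha/2+1}}\to 0$, I would show that $\frac{d}{dt}\|W\|_{\dot H^{\alpha/2+1}}^2$ is bounded on $[0,\infty)$ by testing the differentiated equation using the already established $H^2$-control; uniform continuity plus integrability then force the limit. Since $\alpha/2+1>3/2$, the embedding $H^{\alpha/2+1}\hookrightarrow W^{1,\infty}$, together with Gagliardo--Nirenberg interpolation between the bounded $H^2$-norm and the decaying $\dot H^{\alpha/2+1}$-norm, delivers~\eqref{asyW}. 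The principal technical difficulty is the higher-order estimate: because $\RieszFeller$ is nonlocal there is no pointwise Leibniz rule, so the commutators produced when differentiating the nonlinearity must be arranged so that every resulting top-order term is either paired inside a dissipative Riesz--Feller quantity or absorbed into the signed structural term; careful bookkeeping is required to ensure no uncontrolled $\partial_\xi^3 W$ factor survives.
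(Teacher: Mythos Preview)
Your overall architecture (local existence via Proposition~\ref{prop:CP:H2}, continuation via an $H^2$ a priori estimate, then decay) matches the paper, and the $\ell=0$ energy identity is correct. The genuine gap is at the higher-order levels. You claim that for $\ell\ge 1$ the leading structural term is $-\tfrac12\int f''(\profile)\profile'(\partial_\xi^\ell W)^2\,d\xi$ with a good sign; this is false. Testing~\eqref{CP:U} with $U=\partial_\xi W$ produces, after the integration by parts, the term $\int\profile'\int_0^U\bigl(f'(\profile+\eta)-f'(\profile)\bigr)\,d\eta\,d\xi\approx\tfrac12\int f''(\profile)\profile'\,U^2\,d\xi\le 0$ on the \emph{left}, i.e.\ the opposite sign to the one you assert; the $\ell=2$ level behaves the same way. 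Crucially these terms carry an $O(1)$ coefficient (of order $\|\profile'\|_{L^\infty}$), not a small factor of $\|W\|_{H^2}$, so smallness cannot absorb them. The paper's remedy, which your sketch is missing entirely, is the interpolation inequality
\[
\|v\|_{\dot H^1}^2\le \varepsilon^{\alpha-2}\|v\|_{\dot H^{\alpha/2}}^2+\varepsilon^{\alpha}\|v\|_{\dot H^{\alpha/2+1}}^2:
\]
since $\|U\|_{L^2}^2=\|W\|_{\dot H^1}^2$ and $\|\partial_\xi U\|_{L^2}^2=\|U\|_{\dot H^1}^2$, these bad-sign lower-order terms are absorbed into the dissipation, but only after the three levels are combined with carefully chosen small weights $\gamma_1,\gamma_2$. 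The cubic remainder $\|\partial_\xi U\|_{L^3}^3$ at $\ell=2$ is then handled by a fractional Gagliardo--Nirenberg inequality. Without this interpolation/weighting step the estimate does not close.

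Your route to~\eqref{asyW} is also different from the paper's and has a weak point: the claim that $\tfrac{d}{dt}\|W\|_{\dot H^{\alpha/2+1}}^2$ is uniformly bounded fails, because the $\RieszFeller$-contribution produces $-2\cos(\theta\pi/2)\|W\|_{\dot H^{\alpha+1}}^2$, and $\|W\|_{\dot H^{\alpha+1}}$ (with $\alpha+1>2$) is controlled only in $L^2_t$ by~\eqref{energy_est}, not in $L^\infty_t$. The paper instead shows $\|U\|_{L^2}^2\in W^{1,1}(0,\infty)$ by bounding $\|\partial_t U\|_{L^2}\le\|U\|_{\dot H^\alpha}+C\|U\|_{H^1}\le C\sum_{\ell=0}^2\|W\|_{\dot H^{\alpha/2+\ell}}$ directly from~\eqref{CP:U}, so that both $\|U\|_{L^2}^2$ and its time derivative are integrable; then $\|v\|_{L^\infty}\le\sqrt{2}\|v\|_{L^2}^{1/2}\|\partial_\xi v\|_{L^2}^{1/2}$ finishes.
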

%%%%%%%%%%%%%%%%%%%%%%%%%%%%%%%%%%%%%%%%%%%%

 We note that the third integral of the left hand side in \eqref{energy_est} is non-negative,
 since the flux function $f\in C^2$ is convex such that $f''\geq 0$
 and the profile $\profile$ is monotone decreasing, i.e. $\profile'\leq 0$.
For the solution $W$ constructed in Theorem~\ref{theorem:ASW}, 
it is easy to check that $\partial_\xi W$ satisfies Cauchy problem \eqref{CP:U}.
Consequently we obtain Theorem~\ref{theorem:AS}.
Global existence will be the consequence of the existence of a Lyapunov functional,
 which also allows to deduce the asymptotic stability of traveling waves,
 see also~\cite[Theorem~4]{Achleitner+Hittmeir+Schmeiser:2011}
 for the special case $\theta=2-\alpha$.

%We introduce the energy and dissipation norms as follows.
%\begin{equation*}%\label{energy:WUU'}
%	\begin{split}
% E(t)^2 &:=  \sup_{0 \le \tau \le t}(\|W(\tau)\|_{L^2}^2 + \gamma_1 \|U(\tau)\|_{L^2}^2 +  \gamma_2 \|\partial_\xi U(\tau)\|_{L^2}^2), \\
% D(t)^2 &:=  \integrall{0}{t}{(\|W(\tau)\|^2_{\dot{H}^{\alpha/2}} 
 % + \gamma_1 \|U(\tau)\|^2_{\dot{H}^{\alpha/2}} + \gamma_2 \|\partial_\xi U(\tau)\|^2_{\dot{H}^{\alpha/2}}) }{\tau}.
%	\end{split}
%\end{equation*}

%%%%%%%%%%%%%%%%%%%%%%%%%%%%%%%%%%%%%%%%%%%%
\begin{lemma}\label{lem:a_priori}
Suppose that the same assumptions as in Theorem~\ref{theorem:AS} hold.
%Assume that $1<\alpha<2$ and $\theta\leq\min\{\alpha,2-\alpha\}$, and the flux function $f$ be convex.
%Let $\profile(x- \wavespeed t)$ be a shifted traveling wave solution of~\eqref{eq:FCL}
% with monotone decreasing. % profile~$\profile$, 
%Suppose that the initial data of \eqref{CP:W} satisfy $W_0(\xi)=\int_{-\infty}^{\xi}(u_0(\eta)-\profile(\eta))}{\eta}$ and $W_0 \in H^2$.
Let~$W$ be a solution to~\eqref{CP:W} satisfying $W \in C([0,T];H^2)$ % \cap C^1([0;T]);H^1)$ 
for some $T > 0$.
Then there exists some positive constant $\delta_1$ %and $C$ 
independent of $T$ such that if $\sup_{0 \le t \le T}\|W(t)\|_{H^2} \le \delta_1$,
the \emph{a-priori} estimate expressed in \eqref{energy_est} holds for $t \in [0,T]$.
%the following {\it a-priori} estimates holds for $t \in [0;T]$:
%\begin{equation}\label{a_priori_est}
%  \|W(t)\|_{H^2}^2 +  \sum_{\ell = 0}^2 \integrall{0}{t}{\|W(\tau)\|^2_{\dot{H}^{\alpha/2 + \ell}} }{\tau}
%    - \integrall{0}{t}{\integral{\R}{ f''(\profile) \profile' W^2 }{\xi} }{\tau} \le C \|W_0\|_{H^2}^2.
%\end{equation}
\end{lemma}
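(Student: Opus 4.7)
My plan is to derive \eqref{energy_est} by the $H^2$ energy method applied at the three regularity levels $\ell=0,1,2$. For each $\ell$ I apply $\partial_\xi^\ell$ to the evolution equation in \eqref{CP:W}, test the result against $\partial_\xi^\ell W$, integrate over $\R$, sum the three resulting differential identities, and then integrate in time over $[0,t]$. The smallness hypothesis $\sup_{[0,T]}\|W\|_{H^2}\le\delta_1$ enters only to absorb cubic nonlinear error terms into the Riesz-Feller dissipation.

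At the base level $\ell=0$, testing \eqref{CP:W} against $W$ and using Plancherel's theorem together with the formula $\mathrm{Re}\,\RFsymbol(k)=-|k|^\alpha\cos(\theta\pi/2)$ yields
\[
 \tfrac{1}{2}\tfrac{d}{dt}\|W\|_{L^2}^2 + \cos(\tfrac{\theta\pi}{2})\|W\|_{\dot{H}^{\alpha/2}}^2 - \tfrac{1}{2}\int_\R f''(\profile)\profile'\,W^2\,d\xi = -\int_\R W\,R(\profile,\partial_\xi W)\,d\xi,
\]
where $R := f(\profile+\partial_\xi W)-f(\profile)-f'(\profile)\partial_\xi W = O((\partial_\xi W)^2)$ is the Taylor remainder, and the linear drift $[f'(\profile)-s]\partial_\xi W$ has been integrated by parts to produce the third term on the left. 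The parameter range $\alpha\in(1,2]$ and $|\theta|\le 2-\alpha<1$ guarantees $\cos(\theta\pi/2)>0$, while the third term on the left is non-negative by convexity of $f$ and monotonicity of $\profile$; it reproduces exactly the weighted term appearing in \eqref{energy_est}. The right-hand side is bounded by $C\|W\|_{L^\infty}\|\partial_\xi W\|_{L^2}^2$, absorbable via $H^1\hookrightarrow L^\infty$ and smallness of $\delta_1$.

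At the higher levels $\ell=1,2$, the analogous identity produces a dissipation $\cos(\tfrac{\theta\pi}{2})\|W\|_{\dot{H}^{\alpha/2+\ell}}^2$ and a non-negative weighted contribution $-\tfrac{1}{2}\int f''(\profile)\profile'(\partial_\xi^\ell W)^2\,d\xi$, which is simply discarded on the left of the final inequality. The remaining commutator terms coming from Leibniz expansion of $\partial_\xi^\ell\{[f'(\profile)-s]\partial_\xi W\}$ involve only bounded derivatives of $\profile$ times $\partial_\xi^m W$ with $m\le\ell+1$, while the nonlinear pieces $\partial_\xi^\ell R$ factorize into products of $L^\infty$-norms of low-order derivatives of $W$ (each small by $\delta_1$ via Sobolev embedding) and $L^2$-norms of $\partial_\xi^m W$ with $m\le\ell+1$. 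Both are controlled by the sum of dissipations once $\delta_1$ is small.

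The main obstacle is the $\ell=2$ step, where $\partial_\xi^2 R$ contains contributions involving $\partial_\xi^3 W$, a derivative beyond the a-priori $H^2$ regularity. These are handled by integration by parts (after judicious rearrangement using the chain rule) so that top-order factors are paired against $\partial_\xi^2 W$ in a bilinear form which, by Cauchy-Schwarz and a fractional Gagliardo-Nirenberg inequality of the form $\|\partial_\xi^2 W\|_{L^\infty}\le C\|W\|_{H^2}^{1-\lambda}\|W\|_{\dot{H}^{2+\alpha/2}}^{\lambda}$ (valid for appropriate $\lambda\in(0,1)$ since $\alpha>1$), is dominated by $C\delta_1\sum_{\ell=0}^2\|W\|_{\dot{H}^{\alpha/2+\ell}}^2$. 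Choosing $\delta_1$ sufficiently small permits absorbing every error term into the dissipation; summing the three identities, integrating over $[0,t]$, and discarding the non-negative $\ell=1,2$ weighted terms on the left yields \eqref{energy_est}.
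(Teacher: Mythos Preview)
Your outline has a genuine gap at the higher levels $\ell=1,2$: the ``linear commutator'' contributions are \emph{not} small in $\delta_1$, and they cannot simply be absorbed by the dissipation. Concretely, writing $a=f'(\profile)-s$ and expanding $\partial_\xi^\ell(a\,\partial_\xi W)$ by Leibniz, the $j=0$ piece after integration by parts gives $-\tfrac12\int f''(\profile)\profile'(\partial_\xi^\ell W)^2$, but the $j=1$ piece contributes $\ell\int f''(\profile)\profile'(\partial_\xi^\ell W)^2$, so the net weighted term is $(\ell-\tfrac12)\int f''(\profile)\profile'(\partial_\xi^\ell W)^2$, which for $\ell\ge1$ sits on the wrong side (it is $\le0$ on the left, hence a positive error on the right). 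At $\ell=2$ there is in addition the $j=2$ commutator $\int\partial_\xi^2 W\cdot\partial_\xi^2 a\cdot\partial_\xi W$, also with no small prefactor. After interpolation these errors are bounded by $C\sum_\ell\|W\|_{\dot H^{\alpha/2+\ell}}^2$, but with a constant $C$ determined by $\profile$ and $f$, not by $\delta_1$; there is no reason $C<\cos(\theta\pi/2)$, so absorption fails. The paper resolves this by \emph{weighting} the three estimates: the $\ell=1$ and $\ell=2$ identities are multiplied by small constants $\gamma_1,\gamma_2$ before summing, and the non-small lower-order errors produced at level $\ell$ are absorbed into the dissipation of the \emph{lower} levels (which carry larger weights) via the $\varepsilon$-interpolation inequality $\|v\|_{\dot H^1}^2\le\varepsilon^{\alpha-2}\|v\|_{\dot H^{\alpha/2}}^2+\varepsilon^{\alpha}\|v\|_{\dot H^{\alpha/2+1}}^2$; the choice of $\varepsilon$ and $\gamma_\ell$ is coupled (roughly $\gamma_\ell\sim\varepsilon^2$). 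This hierarchical weighting is the missing idea in your sketch.

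A second, more technical issue: your decomposition separates the linear drift $[f'(\profile)-s]\partial_\xi W$ from the quadratic remainder $R$ and then differentiates each. Computing $\partial_\xi^2[f'(\profile)]$ requires $f\in C^3$, and expanding $\partial_\xi^2 R$ requires still more, whereas the lemma assumes only $f\in C^2$. The paper avoids this by keeping the nonlinearity assembled as $f(\profile+U)-f(\profile)$ and treating the $\ell=2$ level via the equation for $\partial_\xi U$: after testing with $\partial_\xi U$ and one integration by parts, only $f''$ appears, and the resulting error is $\widetilde C(\|U\|_{H^1}^2+\|\partial_\xi U\|_{L^3}^3)$. The cubic part is genuinely small (via a Gagliardo--Nirenberg inequality), while the quadratic part $\widetilde C\|U\|_{H^1}^2$ is again handled by the weighted combination described above.
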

%%%%%%%%%%%%%%%%%%%%%%%%%%%%%%%%%%%%%%%%%%%%

 %We note that the third integral of the left hand side in \eqref{a_priori_est} is non-negative,
 %since the flux function is convex with $f''\geq 0$ and the profile $\profile$ is monotone decreasing, i.e. $\profile'\leq 0$.

\begin{proof}
We rewrite the first equation of~\eqref{CP:W},
\[
 \sdiff{W}{t} + (f(\profile+\sdiff{W}{\xi}) -f(\profile) -f'(\profile)\sdiff{W}{\xi}) +(f'(\profile) -\wavespeed) \sdiff{W}{\xi} = \RieszFeller W ,
\]
and test it with $W$,
\begin{multline*}
  \frac12 \sdiff{(W^2)}{t} +\frac12 \sdiff{}{\xi} \{(f'(\profile)-s)\, W^2\} -\frac12 f''(\profile)\profile' W^2 -W \RieszFeller W \\
    = -\big(f(\profile +\partial_\xi W) -f(\profile) -f'(\profile)\sdiff{W}{\xi}\big)\, W .
\end{multline*}
Integrating with respect to $\xi \in \R$, we obtain 
\begin{multline*} %\label{energy:W}
  \frac12 \sdiff{}{t} \|W\|_{L^2}^2
   - \frac12 \integral{\R}{ f''(\profile)\profile' W^2 }{\xi}
   + \cos \bigPar{ \theta \tfrac{\pi}{2} } \|W\|^2_{\dot{H}^{\alpha/2}} \\
  = - \integral{\R}{ \integrall{0}{1}{ \integrall{0}{\sigma}{ f''(\profile +\gamma \sdiff{W}{\xi}) (\sdiff{W}{\xi})^2 }{\gamma} }{\sigma} W }{\xi} \\
  \leq L(\norm{\partial_\xi W}_{L^\infty}) \norm{W}_{L^\infty} \norm{\partial_\xi W}_{L^2}^2 
\end{multline*}
where $L$ is a positive non-decreasing function.
Due to a Sobolev embedding and the assumption on $W$,
 we deduce $\norm{W(t)}_{W^{1,\infty}} \leq \norm{W(t)}_{H^2}\leq \delta_1$ for all $t\in [0,T]$.
Thus the energy estimate becomes  
\begin{equation} \label{energy:W}
  \frac12 \sdiff{}{t} \|W\|_{L^2}^2
   - \frac12 \integral{\R}{ f''(\profile)\profile' W^2 }{\xi}
   + \cos \bigPar{ \theta \tfrac{\pi}{2} } \|W\|^2_{\dot{H}^{\alpha/2}} \\
  \leq 2\Cone \norm{W}_{L^\infty} \norm{\partial_\xi W}_{L^2}^2 
\end{equation}
for some positive constant $\Cone$ depending on $\delta_1$.
Note that we keep $\norm{W}_{L^\infty}$ for further reference.
Here we used that
 \begin{equation*}
  \integral{\R}{ W \RieszFeller W }{\xi}
    = \integral{\R}{ \RFsymbol(k) |\hat{W}(k)|^2 }{k} 
    = -\cos \bigPar{ \theta \tfrac{\pi}{2} } \|W\|^2_{\dot{H}^{\alpha/2}}
 \end{equation*}
 due to Plancherel's theorem and $\sgn(k)|\hat{W}(k)|^2$ being an odd function.
Similarly, we multiply the first equation of \eqref{CP:U} by $U$, obtaining
\begin{multline*}%\label{energy}
\frac12 \partial_t (U^2) 
 + \partial_\xi \Big\{(f(\profile + U) -  f(\profile))U - \integrall{0}{U}{ (f(\profile + \eta) -  f(\profile)) }{\eta} - \frac12 s U^2 \Big\} \\
 +  \profile' \integrall{0}{U}{ (f'(\profile + \eta) -  f'(\profile))  }{\eta} - U \RieszFeller U = 0.
\end{multline*}
Thus, integrating with respect to $\xi \in \R$, we have
\begin{equation} \label{energy:U}
  \frac12 \sdiff{}{t} \|U\|_{L^2}^2
   + \cos \bigPar{ \theta \tfrac{\pi}{2} } \|U\|^2_{\dot{H}^{\alpha/2}} 
   \le \tfrac12 \norm{\profile'}_{L^\infty} L(\norm{U}_{L^\infty})\ \|U\|_{L^2}^2 \, %+ C\|U\|_{L^\infty} \|U\|^2_{H^1},
   \leq \CUone \norm{U}_{L^2}^2
\end{equation}
%where $L$ is again a positive non-decreasing function and
 with a positive constant $\CUone$ depending on $\delta_1$.
Next, we differentiate \eqref{CP:U}, obtaining
% \begin{equation} \label{eq:U'}
$\partial_t \partial_\xi U + \partial_\xi^2 \{f(\profile+U)-f(\profile)\} -\wavespeed \partial_\xi^2 U = \RieszFeller \partial_\xi U$.
% \end{equation}
Testing this equation by $\partial_\xi U$ yields
\begin{multline*}
 \frac12 \partial_t (|\partial_\xi U|^2)
   +\frac12 \partial_\xi \{(f'(\profile + U) -s )(\partial_\xi U)^2\} 
   -\partial_\xi U \RieszFeller  \partial_\xi U  \\
 = -\frac12 \partial_\xi f'(\profile +U)\, (\partial_\xi U)^2
   -\partial_\xi \big((f'(\profile +U) -f'(\profile))\, \profile'\big)\, \partial_\xi U . %\\
%  = -\frac32 f''(\profile +U)\, \profile' (\partial_\xi U)^2
%    -\frac12 f''(\profile +U)\, (\partial_\xi U)^3 \\
%    -(f''(\profile +U) -f''(\profile))\, (\profile')^2 \, \partial_\xi U
%    -(f'(\profile +U) -f'(\profile))\, \profile'' \, \partial_\xi U .
\end{multline*}
Integrating with respect to $\xi \in \R$, we get
\begin{multline*}%\label{energy:U'}
 \frac12 \sdiff{}{t} \|\partial_\xi U\|_{L^2}^2
   + \cos \bigPar{ \theta \tfrac{\pi}{2} } \|\partial_\xi U\|^2_{\dot{H}^{\alpha/2}} \\
  =-\frac12 \integral{\R}{ \partial_\xi f'(\profile +U)\, (\partial_\xi U)^2 }{\xi}
   - \integral{\R}{ \partial_\xi \big((f'(\profile +U) -f'(\profile))\, \profile'\big)\, \partial_\xi U }{\xi} ,
\end{multline*}
and hence
\begin{equation} \label{energy:U'}
 \frac12 \sdiff{}{t} \|\partial_\xi U\|_{L^2}^2
   + \cos \bigPar{ \theta \tfrac{\pi}{2} } \|\partial_\xi U\|^2_{\dot{H}^{\alpha/2}} 
%   \le \Ctwo \norm{U}_{H^1}^2 +L(\norm{U}_{L^\infty}) \big(\norm{U}_{H^1}^2 + \|\partial_\xi U\|_{L^3}^3\big),
   \le \Ctwo \big(\norm{U}_{H^1}^2 + \|\partial_\xi U\|_{L^3}^3\big),
\end{equation}
where $\Ctwo$ is a positive constant depending on $\delta_1$.
% where $\Ctwo$ is a positive constant and $L$ is a positive non-decreasing function.

%In the rest of this proof, 
By combining \eqref{energy:W}, \eqref{energy:U} and \eqref{energy:U'}, 
we construct the good energy estimate.
For this purpose, we prepare some useful interpolation inequalities.
For $0 \le \sigma \le 2$ and $\varepsilon > 0$, we obtain
\begin{equation}\label{interpol} 
 \|v\|^2_{\dot{H}^1} 
 \leq \varepsilon^{\sigma -2} \|v\|^2_{\dot{H}^{\sigma/2}} + \varepsilon^{\sigma} \|v\|^2_{\dot{H}^{\sigma/2+1}}. 
% \|u\|_{L^3}^3
% & \leq C \|u\|_{L^2} \|u\|^2_{H^{\alpha/4}}, \label{GN}  
  %\leq C \norm{U}_{H^1}\norm{U}^2_{H^\frac{\alpha+4}{4}} 
\end{equation} 
The inequality \eqref{interpol} is proved as follows.
For arbitrary constants $\varepsilon > 0$ and $k \in \R$, we put $h = \varepsilon k$.
Then, by the fact that $h^2 \le |h|^{\sigma} + |h|^{2+\sigma}$ for all $h\in\R$ and $0 \le \sigma \le 2$,
we obtain $k^2 \le \varepsilon^{\sigma -2}|k|^{\sigma} +  \varepsilon^{\sigma}|k|^{2+\sigma}$.  
Thus, by using this inequality and Plancherel's theorem, we arrive at \eqref{interpol}.
On the other hand, for $\sigma > 1/4$, we have
\begin{equation}\label{GN}
 \|v\|_{L^3}^3  \leq C_0 \|v\|_{L^2} \|v\|^2_{H^{\sigma}} \le  2^{\sigma} C_0 \|v\|_{L^2} (\|v\|_{L^2}^2 + \|v\|^2_{\dot{H}^{\sigma}}),   
\end{equation} 
where $C_0$ is a certain positive constant. 
%\begin{equation} \label{GN}
% \norm{\sdiff{U}{\xi}}_{L^3}^3
%  \leq C \norm{\sdiff{U}{\xi}}_{L^2} \norm{\sdiff{U}{\xi}}^2_{H^{\frac{\alpha}{4}}}
%  \leq C \norm{U}_{H^1}\norm{U}^2_{H^\frac{\alpha+4}{4}} \,.
%\end{equation} 
The first interpolation inequality of \eqref{GN} is a generalization 
of the celebrated Gagliardo-Nirenberg inequalities 
(see e.g. \cite{Henry:1981}) to Sobolev spaces with fractional order, 
which was proven by Amann~\cite[Proposition~4.1]{Amann:1985}.
The second inequality holds as a consequence of 
$(1 + |k|^2)^{\sigma} \le 2^{2\sigma}(1 + |k|^{2 \sigma})$
for all $k \in \R$.

We multiply \eqref{energy:U} by $\gamma_1$ and combine the resultant inequality with \eqref{energy:W}, obtaining
\begin{multline*}%\label{energy:W}
 \frac12 \sdiff{}{t} (\|W\|_{L^2}^2 + \gamma_1 \|U\|_{L^2}^2) 
   - \frac12 \integral{\R}{ f''(\profile) \profile' W^2 }{\xi} %\\
   + \cos \bigPar{ \theta \tfrac{\pi}{2} } (\|W\|^2_{\dot{H}^{\alpha/2}} + \gamma_1 \|U\|^2_{\dot{H}^{\alpha/2}} ) \\
 \le \gamma_1 \CUone \|U\|_{L^2}^2 + 2\Cone \norm{W}_{L^\infty} \|\partial_\xi W\|^2_{L^{2}}, 
\end{multline*}
where $\gamma_1$ is a positive constant to be determined later.
By the fact that $\partial_\xi W = U$, we can apply \eqref{interpol} with $v = W$ and $\sigma = \alpha$
to the above inequality, and get  
\begin{multline*}
 \frac12 \sdiff{}{t} (\|W\|_{L^2}^2 + \gamma_1 \|U\|_{L^2}^2) 
   - \frac12 \integral{\R}{ f''(\profile) \profile' W^2 }{\xi} \\
   + \{ \cos \bigPar{ \theta \tfrac{\pi}{2} } - \varepsilon_1^{\alpha - 2}\gamma_1 \CUone \} \|W\|^2_{\dot{H}^{\alpha/2}} 
  + \gamma_1 \{\cos \bigPar{ \theta \tfrac{\pi}{2} } - \varepsilon_1^{\alpha}\CUone\} \|U\|^2_{\dot{H}^{\alpha/2}} \\
   \le 2\Cone \norm{W}_{L^\infty} \|\partial_\xi W\|^2_{L^{2}}. 
\end{multline*}
Therefore, we choose $\varepsilon_1$ satisfying $4 \varepsilon_1^\alpha \CUone = \cos(\theta \pi/2)$, and $\gamma_1 = \varepsilon_1^2$ to get
\begin{multline} \label{energy:WU}
 \frac12 \sdiff{}{t} (\|W\|_{L^2}^2 + \gamma_1 \|U\|_{L^2}^2)
  - \frac12 \integral{\R}{ f''(\profile) \profile' W^2 }{\xi} \\
 + \frac34 \cos \bigPar{ \theta \tfrac{\pi}{2} } (\|W\|^2_{\dot{H}^{\alpha/2}} + \gamma_1 \|U\|^2_{\dot{H}^{\alpha/2}} )
  \le 2\Cone \norm{W}_{L^\infty} \|\partial_\xi W\|^2_{L^{2}}. 
\end{multline}
Similarly we multiply \eqref{energy:U'} by $\gamma_2$ and combine the resultant inequality with~\eqref{energy:WU}.
Furthermore, applying \eqref{interpol} to the resultant inequality, we have 
\begin{multline*}
 \frac12 \sdiff{}{t} (\|W\|_{L^2}^2 + \gamma_1 \|U\|_{L^2}^2 +  \gamma_2 \|\partial_\xi U\|_{L^2}^2) 
   - \frac12 \integral{\R}{ f''(\profile) \profile' W^2 }{\xi} %\\
   + \{ \frac34 \cos \bigPar{ \theta \tfrac{\pi}{2} } - \varepsilon_2^{\alpha - 2}\gamma_2 \Ctwo \} \|W\|^2_{\dot{H}^{\alpha/2}} \\
   + \{ \frac34 \gamma_1 \cos \bigPar{ \theta \tfrac{\pi}{2} } - (1 + \varepsilon_2^{-2}) \varepsilon_2^{\alpha}\gamma_2 \Ctwo\} \|U\|^2_{\dot{H}^{\alpha/2}} %\\
   + \gamma_2 \{\cos \bigPar{ \theta \tfrac{\pi}{2} } - \varepsilon_2^{\alpha}\Ctwo\} \|\partial_\xi U\|^2_{\dot{H}^{\alpha/2}} \\
 \le 2\Cone \norm{W}_{L^\infty} \|\partial_\xi W\|^2_{L^{2}} + \gamma_2 \Ctwo \|\partial_\xi U\|_{L^3}^3.
\end{multline*}
Then, choosing $\varepsilon_2$ such that $4 \varepsilon_2^\alpha \Ctwo = \cos(\theta \pi/2)$,
and $\gamma_2 = \min\{ \varepsilon_2^2, \gamma_1(1 + \varepsilon_2^{-2})^{-1}\}$, yields
\begin{multline}\label{energy:WUU'}
 \frac12 \sdiff{}{t} (\|W\|_{L^2}^2 + \gamma_1 \|U\|_{L^2}^2 +  \gamma_2 \|\partial_\xi U\|_{L^2}^2) 
    - \frac12 \integral{\R}{ f''(\profile) \profile' W^2 }{\xi} \\
 + \frac12 \cos \bigPar{ \theta \tfrac{\pi}{2} } (\|W\|^2_{\dot{H}^{\alpha/2}} 
    + \gamma_1 \|U\|^2_{\dot{H}^{\alpha/2}} + \gamma_2 \|\partial_\xi U\|^2_{\dot{H}^{\alpha/2}}) \\
 \le 2\Cone \norm{W}_{L^\infty} \|\partial_\xi W\|^2_{L^{2}} + \gamma_2 \Ctwo \|\partial_\xi U\|_{L^3}^3.
\end{multline}
We introduce the energy and dissipation norms as follows.
\begin{align*}
 E(t)^2 &:=  \sup_{0 \le \tau \le t}(\|W(\tau)\|_{L^2}^2 + \gamma_1 \|U(\tau)\|_{L^2}^2 +  \gamma_2 \|\partial_\xi U(\tau)\|_{L^2}^2), \\
 D(t)^2 &:=  \integrall{0}{t}{(\|W(\tau)\|^2_{\dot{H}^{\alpha/2}} 
  + \gamma_1 \|U(\tau)\|^2_{\dot{H}^{\alpha/2}} + \gamma_2 \|\partial_\xi U(\tau)\|^2_{\dot{H}^{\alpha/2}}) }{\tau}.
\end{align*}
Then, integrating \eqref{energy:WUU'} with respect to $t$, we have
\begin{multline*}
 \|W\|_{L^2}^2 + \gamma_1 \|U\|_{L^2}^2 +  \gamma_2 \|\partial_\xi U\|_{L^2}^2 +  \cos \bigPar{ \theta \tfrac{\pi}{2} } D(t)^2
    - \integrall{0}{t}{\integral{\R}{ f''(\profile) \profile' W^2 }{\xi} }{\tau} \\
 \le E_0^2 + \integrall{0}{t}{ \big(4\Cone \norm{W}_{L^\infty} \|U\|^2_{L^2} + 2 \gamma_2 \Ctwo \|\partial_\xi U\|_{L^3}^3\big) }{\tau},
\end{multline*}
where we define $E_0^2 := \|W_0\|_{L^2}^2 + \gamma_1 \|U_0\|_{L^2}^2 +  \gamma_2 \|\partial_\xi U_0\|_{L^2}^2$.
Thus, by employing \eqref{interpol}, and \eqref{GN} with $v = \partial_\xi U$ and $\sigma = \alpha/2$, 
 % and the fact that $\|W\|_{W^{1,\infty}} \le C \|W\|^2_{H^2}$, 
 we arrive at
\begin{equation*}
 E(t)^2  + \cos \bigPar{ \theta \tfrac{\pi}{2} } D(t)^2 - \integrall{0}{t}{ \integral{\R}{ f''(\profile) \profile' W^2 }{\xi} }{\tau} 
%   \leq E_0^2 + \integrall{0}{t}{ C \|U\|^2_{L^2} + \Ctwo 2^{\sigma} C_0 \|\partial_\xi U\|_{L^2} (\|\partial_\xi U\|_{L^2}^2 + \|\partial_\xi U\|^2_{\dot{H}^{\sigma}}) }{\tau}
   \le E_0^2 + C E(t)D(t)^2
\end{equation*}
for some positive constant~$C$.
Finally, using the fact that $E(t) \le \delta_1^2 C$, we arrive at the desired a-priori estimate.
\end{proof}

\bigskip

\begin{proof}[Proof of Theorem~\ref{theorem:ASW}]
% \noindent
% {\bf Proof of Theorem~\ref{theorem:ASW}}.
The existence of global-in-time solutions to the initial value problem \eqref{CP:W} 
can be obtained by the continuation argument based on a local existence result in Proposition~\ref{prop:CP:H2} 
combined with the {\it a-priori} estimate in Lemma~\ref{lem:a_priori}.
Because the argument is standard, we may omit the details here.
In the rest of this proof, we prove only the asymptotic stability result \eqref{asyW}.

To this end, we prepare the following interpolation inequality.
For $0 \le \sigma \le 2$, we have
\begin{equation*}%\label{interpol2} 
 \|v\|_{\dot{H}^\sigma} 
 \leq 2(\|v\|_{\dot{H}^{\sigma/2}} + \|v\|_{\dot{H}^{\sigma/2+1}}), 
\end{equation*} 
by using the fact that $k^{2\sigma} \le 2(|k|^{\sigma} + |k|^{2+\sigma})$.  
By virtue of this interpolation inequality, \eqref{interpol}, and the first equation of \eqref{CP:U}, 
we have 
 \begin{equation*}
 	\begin{split}
 \|\partial_t U\|_{L^2} 
 & \le \|\RieszFeller U\|_{L^2} + \|\{f'(\profile+U)-f'(\profile)\}\profile'\|_{L^2} + \| \{f'(\profile + U) -\wavespeed\}\partial_\xi U\|_{L^2} \\
 & \le \|U\|_{\dot{H}^\alpha} + C \|U\|_{H^1} 
 \le C \sum_{\ell = 0}^2 \|W\|_{\dot{H}^{\alpha/2 + \ell}}.
	\end{split}
 \end{equation*}
Thus, by the above estimate, we compute that
  \begin{equation*}
 	\begin{split}
\Big| \sdiff{}{t} \| U \|^2_{L^2} \Big| \le \|U\|_{L^2}^2 +  \|\partial_t U\|_{L^2}^2 
 \le C \sum_{\ell = 0}^2 \|W\|_{\dot{H}^{\alpha/2 + \ell}}^2.
	\end{split}
 \end{equation*}
This estimate and \eqref{interpol} with \eqref{energy_est} tell us that 
$\|U(\cdot)\|_{L^2}^2 \in W^{1,1}(0,\infty)$, and hence $\|U(t)\|_{L^2} \to 0$ as $t \to \infty$.
Finally, employing the Sobolev inequality that
$\| v \|_{L^\infty} \le \sqrt{2} \|v\|_{L^2}^{1/2} \|\partial_\xi v\|_{L^2}^{1/2}$,
we arrive at the desired result.
%\qed
\end{proof}

%%%%%%%%%%%%%%%%%%%%%
%%%%%%%    Section 4   %%%%%%%
%%%%%%%%%%%%%%%%%%%%%

\section{Convergence rate toward traveling waves} \label{sec:Rates}
% New Section 4 based on W1 n Winfinity in space global existence result

We consider the convergence rate of the solution toward the corresponding traveling waves.
Kawashima, Nishibata and Nishikawa~\cite{Kawashima+etal:2004} proposed an $L^p$ energy method 
to study the asymptotic stability and the associated convergence rates of planar viscous rarefaction waves 
of multi-dimensional viscous conservation laws.
When the authors obtain the convergence estimate, they derived the $L^1$ estimate 
by using the energy method associated with the sign function.
This approach is useful. It is however difficult to apply this method because of a Riesz-Feller operator.
To overcome this difficulty, we employ not only the energy method but also 
the representation of the mild solution. %and get the following key lemma.
Precisely, our purpose in this section is to derive the following theorem.

%%%%%%%%%%%%%%%%%%%%%%%%%%%%%%%%%%%%%%%%%%%%
\begin{theorem}\label{theorem:CRW}
Suppose that the same assumptions as in Theorem~\ref{theorem:AS} and $f\in C^\infty(\R)$ hold.
Then the Cauchy problem \eqref{CP:W} with $W_0\in W^{1,1}(\R) \cap W^{1,\infty}(\R)$ 
has a unique global solution $W(\xi,t)$ satisfying 
\[ W \in C([0,\infty);W^{1,1}(\R)\cap H^1(\R))\cap  L^\infty(0,\infty;W^{1,\infty}(\R)) \]
%$W \in C([0,\infty);W^{1,1}(\R)\cap H^1(\R))\cap  L^\infty((0,T);W^{1,\infty}(\R))$
with estimates \eqref{low_energy_est} and~\eqref{est-L1}.
Moreover, there exists a positive constant~$\delta_1$ such that if $\norm{W_0}_{W^{1,1}}\leq \delta_1$ then
\begin{equation}\label{decay_est_W}
 \|W(t)\|_{H^1} \le  C E_1\ (1+t)^{-1/(2\alpha)}
\end{equation}
for $t \ge 0$, where $E_1 := \|W_0\|_{H^1} + \|W_0\|_{W^{1,1}}$ and $C$ is a certain positive constant independent of $t$.
\end{theorem}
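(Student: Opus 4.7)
The plan is to prove Theorem~\ref{theorem:CRW} in three successive stages: first establish global well-posedness in the low-regularity class $H^1(\R)\cap W^{1,1}(\R)\cap W^{1,\infty}(\R)$ together with an $H^1$ energy estimate; next derive a uniform-in-time $W^{1,1}$ bound on the solution; and finally extract the decay rate~\eqref{decay_est_W} by combining the energy dissipation with a Fourier-splitting interpolation between $L^1$ and $\dot H^{\alpha/2}$.

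For the first stage, I would adapt the contraction-mapping scheme of Proposition~\ref{prop:CP:H2} by replacing $H^2$ with the Banach space $C([0,T];H^1(\R)\cap W^{1,1}(\R))\cap L^\infty(0,T;W^{1,\infty}(\R))$. The $L^1$-part of the norm is controlled by Young's convolution inequality together with $\|G_\theta^\alpha(\cdot,t)\|_{L^1}=1$ and the singular estimate~\ref{K:SFDE:prop7}, the $H^1$-part by Lemma~\ref{lem:ker}, and the $W^{1,\infty}$-part by the global-in-time theory for essentially bounded initial data from~\cite{Droniou+etal:2003,Achleitner+etal:2012}, which is where the hypothesis $f\in C^\infty(\R)$ enters. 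A one-derivative truncation of the Lyapunov argument in Lemma~\ref{lem:a_priori} then yields an a-priori estimate of the form~\eqref{low_energy_est}, namely
\[
\|W(t)\|_{H^1}^2 + c\sum_{\ell=0}^{1}\integrall{0}{t}{\|W(\tau)\|_{\dot H^{\alpha/2+\ell}}^2}{\tau} \le \|W_0\|_{H^1}^2,
\]
and the standard continuation argument closes global existence.

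For the uniform $L^1$-type bound~\eqref{est-L1}, I would start from the mild formulation~\eqref{mildform:W}. Since $G_\theta^\alpha(\cdot,t)$ is a probability density, Young's inequality gives $\|S_t\phi\|_{L^1}\le\|\phi\|_{L^1}$, and property~\ref{K:SFDE:prop7} gives $\|\partial_\xi S_t\phi\|_{L^1}\le \cK\, t^{-1/\alpha}\|\phi\|_{L^1}$. Writing the nonlinearity as
\[
F(\profile,\partial_\xi W) = [f'(\profile)-\wavespeed]\,\partial_\xi W + N(\profile,\partial_\xi W)
\]
with $N$ purely quadratic in $\partial_\xi W$, differentiating the mild formula in $\xi$ and taking $L^1$ norms produces a singular Gronwall-type inequality for $\|W(t)\|_{W^{1,1}}$ with kernel $(t-\tau)^{-1/\alpha}$, which is integrable since $\alpha>1$. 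Using the $L^\infty$-smallness of $\partial_\xi W$ (from the energy estimate combined with the $W^{1,\infty}$ existence theory) and smallness of $\|W_0\|_{W^{1,1}}$, one closes a uniform bound $\sup_{t\ge0}\|W(t)\|_{W^{1,1}} \le C\|W_0\|_{W^{1,1}}$.

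The decay rate is then obtained through the Fourier-splitting interpolation
\[
\|v\|_{L^2}^{2(1+\alpha)} \le C\,\|v\|_{L^1}^{2\alpha}\,\|v\|_{\dot H^{\alpha/2}}^2,
\]
derived by bounding $|\hat v|\le\|v\|_{L^1}$ on $|k|\le R$ and using $1\le R^{-\alpha}|k|^\alpha$ on $|k|\ge R$, then optimizing in~$R$. Substituting the $L^1$ bound $\|W(\tau)\|_{L^1}\le CE_1$ into the $L^2$-dissipation inequality
\[
\tfrac{\dd}{\dd t}\|W\|_{L^2}^2 + c\cos\bigPar{\theta\tfrac{\pi}{2}}\|W\|_{\dot H^{\alpha/2}}^2 \le \text{(cubic remainder)}
\]
turns the latter into an ODE inequality $y' \le -\kappa\, y^{1+\alpha}$ with $\kappa \sim E_1^{-2\alpha}$ for $y(t)=\|W(t)\|_{L^2}^2$, which integrates to $y(t)\le C E_1^2(1+t)^{-1/\alpha}$. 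Repeating the argument one derivative higher, using the $W^{1,1}$ bound on $\partial_\xi W$, upgrades this to the $H^1$ rate asserted in~\eqref{decay_est_W}. The main obstacle will be closing the singular Gronwall inequality for the $W^{1,1}$-norm uniformly in time: the exponent $1/\alpha$ on the kernel approaches~$1$ as $\alpha\searrow 1$, so the argument is only marginally integrable and requires a delicate coupling between the smallness parameter~$\delta_1$ controlling $\|\partial_\xi W\|_{L^\infty}$ via the energy estimate and the smallness of $\|W_0\|_{W^{1,1}}$ driving the linear Gronwall factor.
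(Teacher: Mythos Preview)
Your decay-rate argument via the Nash inequality is essentially the paper's approach (the paper phrases it as a time-weighted energy estimate with weight $(1+\tau)^\beta$ rather than an ODE $y'\le -\kappa y^{1+\alpha}$, but these are equivalent). The real problem lies in your second stage, the uniform-in-time $W^{1,1}$ bound. Your singular Gronwall scheme cannot close globally: in the decomposition $F=[f'(\profile)-\wavespeed]\,\partial_\xi W + N$, the coefficient $f'(\profile)-\wavespeed$ is $O(1)$, not small, so after taking $L^1$ norms of the differentiated mild formula you obtain
\[
\|\partial_\xi W(t)\|_{L^1} \le \|\partial_\xi W_0\|_{L^1} + C\integrall{0}{t}{(t-\tau)^{-1/\alpha}\|\partial_\xi W(\tau)\|_{L^1}}{\tau}
\]
with a non-small constant $C$; this yields Mittag--Leffler growth, not a uniform bound. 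Your proposed remedy, ``$L^\infty$-smallness of $\partial_\xi W$'', is not available here: in the $H^1\cap W^{1,\infty}$ setting the Droniou--Gallou\"et--Vovelle theory only gives $\|\partial_\xi W(t)\|_{L^\infty}\le \|\partial_\xi W_0\|_{L^\infty}+2\|\profile\|_{L^\infty}$, which is bounded but not small; what is small is $\|W\|_{L^\infty}\le\|\partial_\xi W_0\|_{L^1}$, and that does not help with the linear term.

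The paper avoids this obstacle by two structural observations that you are missing. First, $\|\partial_\xi W(t)\|_{L^1}\le\|\partial_\xi W_0\|_{L^1}$ follows directly from the $L^1$ contraction principle for~\eqref{CLND+MCF}, since $\partial_\xi W = u-\profile$ is the difference of two solutions. Second, for $\|W(t)\|_{L^1}$ the paper uses an entropy method: multiply~\eqref{CP:W} by a mollified sign function $s_\delta(W)=(\rho_\delta\ast\sgn)(W)$ and integrate. The Riesz--Feller term has a good sign because $S_\delta'(W)\,\RieszFeller W\le \RieszFeller S_\delta(W)$ for the convex primitive $S_\delta$, and---this is the key point---the dangerous linear term becomes
\[
\integral{\R}{s_\delta(W)\,[f'(\profile)-\wavespeed]\,\partial_\xi W}{\xi}
= -\integral{\R}{S_\delta(W)\,f''(\profile)\,\profile'}{\xi} \ge 0,
\]
after integration by parts, using $f''\ge 0$, $\profile'\le 0$, and $S_\delta\ge 0$. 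Thus the $O(1)$ linear piece is absorbed by sign rather than by smallness, and only the genuinely quadratic remainder survives; that remainder is then controlled by the time-integrated dissipation from~\eqref{low_energy_est}. Without this mechanism your $L^1$ estimate fails, and with it the decay argument has no input.
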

%%%%%%%%%%%%%%%%%%%%%%%%%%%%%%%%%%%%%%%%%%%%

The proof of the existence of global-in-time solutions is based on results for the Cauchy problem~\eqref{eq:FCL} with fractional Laplacian~\cite{Droniou+etal:2003}
 and its extension to the Cauchy problem~\eqref{eq:FCL} with Riesz-Feller operators~\cite{Achleitner+etal:2012}.
There the assumption $f\in C^\infty(\R)$ is made to simplify the presentation.
The method is applicable also in case of $f\in C^k(\R)$, $k\geq 2$,
 but yields a lower regularity for the unique solution $u$. 
 
%%%%%%%%%%%%%%%%%%%%%%%%%%%%%%%%%%%%%%%%%%%%%
\begin{lemma} \label{lem:CP:W11nW1infinity}
Suppose that $f\in C^\infty(\R)$ % is a convex function
 and $W_0\in W^{1,1}(\R)\cap W^{1,\infty}(\R)$.
Then Cauchy problem~\eqref{CP:W} has a unique mild solution $W \in C([0,T];W^{1,1}(\R)\cap H^1(\R))\cap L^\infty(0,T;W^{1,\infty}(\R))$ for any $T>0$ with
\begin{align}
 \norm{W(t)}_{L^1} &\leq \norm{W_0}_{L^1} + L(\sup_{\tau\in[0,t]} \norm{\sdiff{W}{\xi}(\tau)}_{L^\infty}) \norm{\sdiff{W_0}{\xi}}_{L^1}\ t \ , \label{est:W:L1} \\
 \norm{\sdiff{W}{\xi}(t)}_{L^1} &\leq \norm{\sdiff{W_0}{\xi}}_{L^1} \ , \label{est:U:L1} \\ 
 \norm{W(t)}_{L^\infty} &\leq \norm{\sdiff{W_0}{\xi}}_{L^1} \ ,  \label{est:W:Linfinity} \\
 \norm{\sdiff{W}{\xi}(t)}_{L^\infty} &\leq \norm{\sdiff{W_0}{\xi}}_{L^\infty} +2\norm{\profile}_{L^\infty} \ ,  \label{est:U:Linfinity}
\end{align}
for $0 \le t \le T$, where  $L$ is a positive non-decreasing function.
%for some positive non-decreasing function $L$ and $t \geq t_0 >0$.
Moreover, for any positive time $t_0>0$,
 $W\in C_b^\infty(\R\times (t_0,\infty))$
 and it is a classical solution of the first equation of \eqref{CP:W}.
\end{lemma}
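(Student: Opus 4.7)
The plan is to reduce \eqref{CP:W} to the Cauchy problem for $u := \profile + \sdiff{W}{\xi}$, which solves the nonlocal conservation law \eqref{CLND+MCF} with $u_0 = \profile + \sdiff{W_0}{\xi}\in L^\infty(\R)$. For this essentially bounded initial datum, the global well-posedness and smoothing theory of \cite{Droniou+etal:2003,Achleitner+etal:2012} applies and produces a unique global mild solution $u\in C([0,\infty);L^\infty(\R))$ of \eqref{CLND+MCF} which is $C_b^\infty$ on $\R\times(t_0,\infty)$ for every $t_0>0$; this is exactly what supplies the smoothness assertion for $W$ at the end. Setting $U := u-\profile$, estimates \eqref{est:U:L1} and \eqref{est:U:Linfinity} follow from contraction properties of \eqref{CLND+MCF}: since $\profile$ is a stationary solution of \eqref{CLND+MCF} and $u-\profile\big|_{t=0} = \sdiff{W_0}{\xi}\in L^1$, the $L^1$-contraction for such nonlocal conservation laws (provable via a Kato-type inequality $\sgn(U)\,\RieszFeller U\leq \RieszFeller |U|$ together with the fact that $\RFsymbol(0)=0$ forces $\int \RieszFeller|U|\,\dd\xi = 0$) yields \eqref{est:U:L1}, while the $L^\infty$ maximum principle gives $\norm{u(t)}_{L^\infty}\leq\norm{u_0}_{L^\infty}\leq\norm{\profile}_{L^\infty}+\norm{\sdiff{W_0}{\xi}}_{L^\infty}$, from which \eqref{est:U:Linfinity} follows by the triangle inequality.

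Since $W_0\in W^{1,1}(\R)$ forces $W_0\in C_0(\R)$ with $W_0(-\infty)=0$, I reconstruct $W$ via the anti-derivative $W(\xi,t):=\int_{-\infty}^\xi U(\eta,t)\,\dd\eta$, which is well-defined thanks to $U(\cdot,t)\in L^1(\R)$ and immediately gives \eqref{est:W:Linfinity} from $|W(\xi,t)|\leq\norm{U(t)}_{L^1}\leq\norm{\sdiff{W_0}{\xi}}_{L^1}$. To obtain \eqref{est:W:L1}, I would first verify that this $W$ satisfies the mild formulation \eqref{mildform:W} (by integrating the mild form of \eqref{CP:U} in $\xi$ and applying Fubini), then apply Young's inequality together with \ref{K:SFDE:prop2} to deduce
\[
 \norm{W(t)}_{L^1}\leq\norm{W_0}_{L^1}+\integrall{0}{t}{\norm{F(\profile,\sdiff{W}{\xi})(\tau)}_{L^1}}{\tau},
\]
and finally exploit the Taylor identity $F(\profile,U)=\bigl(\integrall{0}{1}{[f'(\profile+\sigma U)-\wavespeed]}{\sigma}\bigr)\,U$ together with the uniform $L^\infty$ bound \eqref{est:U:Linfinity} and the $L^1$ bound \eqref{est:U:L1} to control $\norm{F(\tau)}_{L^1}\leq L(\sup_{\tau\in[0,t]}\norm{\sdiff{W}{\xi}(\tau)}_{L^\infty})\,\norm{\sdiff{W_0}{\xi}}_{L^1}$. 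The $H^1$ component of the functional setting is then free by the interpolation $\norm{v}_{L^2}^2\leq\norm{v}_{L^1}\norm{v}_{L^\infty}$ applied to both $W$ and $\sdiff{W}{\xi}$, and continuity in $t$ with values in $W^{1,1}\cap H^1$ is inherited from the strong continuity of $\SG$ on $L^p$ spaces (Lemma~\ref{lem:conti}) applied to each term of \eqref{mildform:W}.

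The main obstacle is the $L^1$ estimate \eqref{est:W:L1} itself: the Riesz-Feller operator is genuinely nonlocal and does not admit a clean pointwise maximum-principle argument for $W$ (contrary to $U$, which benefits from the divergence form of \eqref{CP:U}). Everything must be pushed through \eqref{mildform:W}, relying on the non-negativity and unit mass of $\Green$ (properties \ref{K:SFDE:prop0} and \ref{K:SFDE:prop2}) to make $\Green(t)\ast\cdot$ an $L^1$-contraction, and on careful verification that the Duhamel integral converges in $L^1$. A secondary technical point is the rigorous identification $\sdiff{W}{\xi}=U$ at the level of the mild formulation, which should be handled by the $C^\infty$-smoothing of $u$ for $t>0$ combined with an approximation argument at $t=0^+$ that legitimizes passing $\sdiff{}{\xi}$ through the convolutions in \eqref{mildform:W}.
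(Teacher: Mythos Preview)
Your proposal is correct and follows essentially the same route as the paper: you reduce to the conservation law for $u=\profile+\sdiff{W}{\xi}$, invoke the $L^\infty$ maximum principle and $L^1$-contraction from \cite{Droniou+etal:2003,Achleitner+etal:2012} to obtain \eqref{est:U:L1}--\eqref{est:U:Linfinity}, recover $W$ as the anti-derivative (giving \eqref{est:W:Linfinity}), and then push the $L^1$ estimate \eqref{est:W:L1} through the Duhamel formula using $\norm{\Green(t)}_{L^1}=1$ and the Lipschitz bound on $F$. The paper proceeds identically, and your identification of the continuity-in-time verification and the $\sdiff{W}{\xi}=U$ matching as the residual technical points is accurate.
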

%%%%%%%%%%%%%%%%%%%%%%%%%%%%%%%%%%%%%%
\begin{proof}
We use again $U =\sdiff{W}{\xi}$
 and analyze the Cauchy problem~\eqref{CP:U} with initial datum $U_0 :=\sdiff{W_0}{\xi} \in L^1(\R)\cap L^{\infty}(\R)$ first.
We recall $U = u -\profile$
 where $u$ and $\profile$ solve equation~\eqref{CLND+MCF}, 
 and $\profile$ is a monotone decreasing function satisfying $\lim_{\xi\to\pm\infty} \profile(\xi) =\upm$.
Thus, $u_0 := U_0 +\profile$ is essentially bounded.
Due to~\cite[Theorem~1]{Droniou+etal:2003} and its extension to equations with Riesz-Feller operators in~\cite{Achleitner+etal:2012},
 the Cauchy problem for~\eqref{CLND+MCF} with initial datum $u_0\in L^\infty(\R)$ has a (unique) solution 
 which satisfies $\norm{u(t)}_{L^\infty(\R)}\leq \norm{u_0}_{L^\infty(\R)}$ for all $t\geq 0$;
in fact, the solution $u$ takes values between the essential lower and upper bounds of $u_0$.
Therefore, $U(t) = u(t) -\profile \in L^\infty(\R_\xi)$ for all $t\geq 0$
 and estimate~\eqref{est:U:Linfinity} follows.

Due to~\cite[Remark 1.2]{Droniou+etal:2003} and its extension to equations with Riesz-Feller operators,
 equation~\eqref{CLND+MCF} supports an $L^1$ contraction principle:
If $u_0$, $v_0 \in L^\infty(\R)$ satisfy $u_0 -v_0 \in L^1(\R)$,
 then the associated solutions $u$ and $v$ of the Cauchy problem for~\eqref{CLND+MCF} satisfy
 $\norm{u(t) -v(t)}_{L^1(\R)} \leq \norm{u_0 -v_0}_{L^1(\R)}$ for all $t\geq 0$. 
Therefore, $U(t) = u(t) -\profile \in L^1(\R_\xi)$ with $\norm{U(t)}_{L^1}\leq \norm{u_0 -\profile}_{L^1} =\norm{U_0}_{L^1}$ for all $t\geq 0$,
 which %yields $\norm{\sdiff{W}{\xi}(t)}_{L^1}\leq \norm{\sdiff{W_0}{\xi}}_{L^1}$
 implies estimate~\eqref{est:U:L1}.
Moreover, its primitive $W(t) \in L^\infty(\R_\xi)$ for all $t\geq 0$, since
 \[ \norm{W(t)}_{L^\infty}
      = \Norm{ \integrall{-\infty}{\xi}{ \sdiff{W}{y}(y,t) }{y} }_{L^\infty} 
      \leq \integrall{-\infty}{\infty}{ \abs{\sdiff{W}{y}(y,t)} }{y}
			= \norm{\sdiff{W}{\xi}(t)}_{L^1} \ .
 \]
Then, we are left to prove that $W(t) \in L^1(\R_\xi)$ for all $t\geq 0$ 
 and the stated continuity in time.
Considering the mild formulation~\eqref{mildform:W}, 
 we obtain the estimate 
%$$W(t) = \Green(t)\ast W_0 + \integrall{0}{t}{ \Green(t-\tau) \ast (f(\profile+U)-f(\profile)-\wavespeed U)  }{\tau}.$$
% We compute the $L^1$-norm of the expressions and use the properties of the kernel $\Green$,
% the Lipschitz continuity of the smooth flux function $f$ as well as the previous lemma to derive the inequality
 \begin{align}
  \|W(t)\|_{L^1} 
  &\leq \|\Green(t)\ast W_0 \|_{L^1} 
  + \integrall{0}{t}{ \| \Green(t-\tau) \ast \{f(\profile+U)-f(\profile)-\wavespeed U\} \|_{L^1} }{\tau} \nonumber \\
  &\leq \|W_0 \|_{L^1} + \integrall{0}{t}{ \|f(\profile+U)-f(\profile)-\wavespeed U\|_{L^1} }{\tau} \nonumber \\
  &\leq \|W_0\|_{L^1} + \integrall{0}{t}{ \big(\widetilde{L}(\norm{U(\tau)}_{L^\infty})\ \|U(\tau)\|_{L^1}\big) }{\tau} \nonumber \\
  &\leq \|W_0\|_{L^1} + \widetilde{L}(\norm{\sdiff{W_0}{\xi}}_{L^\infty} +2\norm{\profile}_{L^\infty})\ \|U_0\|_{L^1}\ t \ , \label{low_L1_W}
 \end{align}
 for $t \ge 0$ by using the local Lipschitz continuity of $f$
 and the previous estimates on $U =\sdiff{W}{\xi}$;
 again, $\widetilde{L}$ is a positive non-decreasing function.
Moreover, for any positive time $t_0>0$, $U \in C_b^\infty(\R\times (t_0,\infty))$
 and $U = \partial_\xi W$ satisfies the first equation of \eqref{CP:U} in the classical sense,
 see~\cite{Droniou+etal:2003,Achleitner+Hittmeir+Schmeiser:2011}.
Due to integrability of $U$, also $W$ is a global-in-time solution of \eqref{CP:W},
 and $W\in C_b^\infty(\R\times (t_0,\infty))$ is a classical solution of the first equation of \eqref{CP:W} for all $t\geq t_0>0$.

To prove that $W \in C([0,T];W^{1,1}(\R)\cap H^1(\R))$,
 we will use the mild formulation %~\eqref{mildform:W} 
\begin{equation} \label{mildform:W:2}
 W(t) = \Green(t)\ast W_0 
  - \integrall{0}{t}{ \Green(t-\tau)\ast F(\profile,\sdiff{W}{\xi}) }{\tau},
\end{equation} 
where $F(\profile,\sdiff{W}{\xi}) := f(\profile+\sdiff{W}{\xi})-f(\profile)-\wavespeed \sdiff{W}{\xi}$.
The first summand on the right hand side satisfies 
 $\Green(\cdot)\ast W_0 \in C([0,T];W^{1,1}(\R)\cap H^1(\R))$,
 due to the assumptions on $W_0$ and the strong continuity of the semigroup in Lemma~\ref{lem:SFDE:semigroup}.
To prove continuity of the second summand, 
 \[ \mathcal{G}_2[W](t) :=\integrall{0}{t}{ \Green(t-\tau)\ast F(\profile,\sdiff{W}{\xi}) }{\tau} \ , \] 
 we use the estimates~\eqref{est:W:L1}--\eqref{est:U:Linfinity}
 and the strong continuity of the semigroup in Lemma~\ref{lem:SFDE:semigroup}.
In particular, we assume w.l.o.g. $0<t_1<t_2$ and rewrite
\begin{align*}
 &\mathcal{G}_2[W](t_1) -\mathcal{G}_2[W](t_2) \\
   &\ = \integrall{0}{t_1}{ (\Green(t_1-\tau) -\Green(t_2-\tau))\ast F(\profile,\sdiff{W}{\xi}) }{\tau}
	    +\integrall{t_1}{t_2}{ \Green(t_2-\tau)\ast F(\profile,\sdiff{W}{\xi}) }{\tau} \\
%   &= \integrall{0}{t_1}{ (\Green(t_1-\tau) -\Green(t_2-t_1)\ast\Green(t_1-\tau))\ast F(\profile,\sdiff{W}{\xi}) }{\tau}
%	     +\integrall{t_1}{t_2}{ \Green(t_2-\tau)\ast F(\profile,\sdiff{W}{\xi}) }{\tau}
   &\ = \integrall{0}{t_1}{ \big[\Green(t_1-\tau)\ast F(\profile,\sdiff{W}{\xi}) 
	                         -\Green(t_2-t_1) \ast\big(\Green(t_1-\tau)\ast F(\profile,\sdiff{W}{\xi})\big) \big] }{\tau} \\
	 &\ \quad +\integrall{t_1}{t_2}{ \Green(t_2-\tau)\ast F(\profile,\sdiff{W}{\xi}) }{\tau}
\end{align*} 
using the semigroup property~\ref{K:SFDE:prop3}.
The first summand converges to zero as $t_2\to t_1$ in the $W^{1,p}$-norms, $p=1,2$,
 due to the Dominated Convergence Theorem, the strong continuity of the semigroup in Lemma~\ref{lem:SFDE:semigroup}
 and that $\integrall{0}{t_1}{ \big(\Green(t_1-\tau) \ast F(\profile,\sdiff{W}{\xi})\big) }{\tau} \in W^{1,1}(\R)\cap H^1(\R)$.
Similarly, the second summand converges to zero as $t_2\to t_1$ in the $W^{1,p}$-norms, $p=1,2$, 
 since $\Green(t_2-\cdot) \ast F(\profile,\sdiff{W}{\xi}) \in L^1((t_2,t_1);W^{1,1}(\R)\cap W^{1,\infty}(\R))$.
Thus, the right hand side of~\eqref{mildform:W:2} is continuous in time with respect to the $W^{1,p}$-norms, $p=1,2$,
 hence $W \in C([0,T];W^{1,1}(\R)\cap H^1(\R))$.
Finally, $W \in L^\infty(0,T;W^{1,\infty}(\R))$ follows from the estimates~\eqref{est:W:Linfinity}-\eqref{est:U:Linfinity}.
\end{proof}

%Thus we only
Next we prove the following {\it a-priori} estimate obtained by Lemma~\ref{lem:CP:W11nW1infinity}.
%can be obtained by the continuation argument based on a local existence 
%combined with the {\it a-priori} estimate in Lemma~\ref{lem:a_priori}.
%Because the argument is standard, we may omit the detail here.
%In the rest of this proof,
%
%We introduce the energy and dissipation norms as follows.
%\begin{equation*}%\label{energy:WUU'}
%	\begin{split}
% E(t)^2 &:=  \sup_{0 \le \tau \le t}(\|W(\tau)\|_{L^2}^2 + \gamma_1 \|U(\tau)\|_{L^2}^2 +  \gamma_2 \|\partial_\xi U(\tau)\|_{L^2}^2), \\
% D(t)^2 &:=  \integrall{0}{t}{(\|W(\tau)\|^2_{\dot{H}^{\alpha/2}} 
 % + \gamma_1 \|U(\tau)\|^2_{\dot{H}^{\alpha/2}} + \gamma_2 \|\partial_\xi U(\tau)\|^2_{\dot{H}^{\alpha/2}}) }{\tau}.
%	\end{split}
%\end{equation*}
%
%%%%%%%%%%%%%%%%%%%%%%%%%%%%%%%%%%%%%%%%%%%%
\begin{lemma}\label{lem:a_priori_CR}
Suppose that the same assumptions as in Theorem~\ref{theorem:CRW} hold.
Let $W(\xi,t)$ be a solution to \eqref{CP:W} satisfying $W \in C([0,T]; W^{1,1}(\R)\cap H^1(\R))\cap L^\infty(0,T;W^{1,\infty}(\R))$ % \cap C^1([0;T]);H^1)$ 
for any $T > 0$.
Then there exists some positive constants $\delta_1$ %and $C$ 
independent of $T$ such that if $\norm{W_0}_{W^{1,1}} \le \delta_1$,
the {\it a-priori} estimates
\begin{multline}\label{low_energy_est}
\|W(t)\|_{H^1}^2 + C \integrall{0}{t}{(\|W(\tau)\|^2_{\dot{H}^{\alpha/2}} + \|W(\tau)\|^2_{\dot{H}^{\alpha/2 + 1}}) }{\tau} % \\
    - \integrall{0}{t}{ \integral{\R}{ f''(\profile) \profile' W^2 }{\xi} }{\tau}
      \le \|W_0\|_{H^1}^2 \ ,
\end{multline}
 \begin{equation}\label{est-L1}
  \|W(t)\|_{W^{1,1}} \leq C (\|W_0\|_{W^{1,1}} + \|W_0\|_{H^1}^2) \ ,
 \end{equation}
hold for $t \in [0,T]$, where $C$ is a constant independent of time $t$.
%the following {\it a-priori} estimates holds for $t \in [0;T]$:
%\begin{equation}\label{a_priori_est}
%  \|W(t)\|_{H^2}^2 +  \sum_{\ell = 0}^2 \integrall{0}{t}{\|W(\tau)\|^2_{\dot{H}^{\alpha/2 + \ell}} }{\tau}
%    - \integrall{0}{t}{\integral{\R}{ f''(\profile) \profile' W^2 }{\xi} }{\tau} \le C \|W_0\|_{H^2}^2.
%\end{equation}
\end{lemma}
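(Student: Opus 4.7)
The statement contains two \emph{a priori} bounds. For the $H^1$ energy estimate~\eqref{low_energy_est}, my plan is to reproduce the first two steps in the proof of Lemma~\ref{lem:a_priori} but truncate at the $H^1$-level: I test the $W$-equation by $W$ to obtain~\eqref{energy:W}, and the $U$-equation (with $U=\sdiff{W}{\xi}$) by $U$ to obtain~\eqref{energy:U}, skipping the higher-order estimate~\eqref{energy:U'} which would have required $W\in H^2$. The crucial new point is that the smallness needed to absorb the quadratic remainder $\norm{W}_{L^\infty}\norm{\sdiff{W}{\xi}}_{L^2}^2$ no longer comes from a Sobolev embedding of an $H^2$ bound, but from the inequality $\norm{W(t)}_{L^\infty}\leq\norm{\sdiff{W_0}{\xi}}_{L^1}\leq\norm{W_0}_{W^{1,1}}\leq \delta_1$ supplied by~\eqref{est:W:Linfinity} in Lemma~\ref{lem:CP:W11nW1infinity}. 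Adding the two inequalities with a suitable weight $\gamma_1>0$ and applying the interpolation inequality~\eqref{interpol} with $v=W$ and $\sigma=\alpha$ (so that $\norm{\sdiff{W}{\xi}}_{L^2}^2=\norm{W}_{\dot H^1}^2$ is controlled by a combination of $\norm{W}_{\dot H^{\alpha/2}}^2$ and $\norm{W}_{\dot H^{\alpha/2+1}}^2$) lets me absorb the bad terms into the dissipation on the left. Integrating in time then produces~\eqref{low_energy_est}.

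For~\eqref{est-L1}, the estimate $\norm{\sdiff{W}{\xi}(t)}_{L^1}\leq\norm{W_0}_{W^{1,1}}$ is already supplied by~\eqref{est:U:L1}, so only $\norm{W(t)}_{L^1}$ is at stake. My plan is to test the $W$-equation by $\sgn(W)$ after splitting the nonlinearity as $F(\profile,\sdiff{W}{\xi})=(f'(\profile)-\wavespeed)\sdiff{W}{\xi}+\tilde F$ with $\tilde F:=f(\profile+\sdiff{W}{\xi})-f(\profile)-f'(\profile)\sdiff{W}{\xi}=O((\sdiff{W}{\xi})^2)$. Two favourable sign conditions then appear. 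First, because $\Green\geq 0$ and $\norm{\Green(\cdot,t)}_{L^1}=1$, the Riesz-Feller operator generates an $L^1$-contraction semigroup and one has the Kato-type inequality $\integral{\R}{\sgn(W)\,\RieszFeller W}{\xi}\leq 0$. Second, integration by parts in the linear transport term gives
\[
 \integral{\R}{\sgn(W)\,(f'(\profile)-\wavespeed)\sdiff{W}{\xi}}{\xi}
   = \integral{\R}{(f'(\profile)-\wavespeed)\sdiff{|W|}{\xi}}{\xi}
   = -\integral{\R}{f''(\profile)\,\profile'\,|W|}{\xi}\ \geq\ 0,
\]
since $f''(\profile)\geq 0$ and $\profile'\leq 0$. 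Both contributions therefore enter $\tfrac{d}{dt}\norm{W}_{L^1}$ with a non-positive sign and can be discarded, leaving $\tfrac{d}{dt}\norm{W}_{L^1}\leq\norm{\tilde F}_{L^1}\leq C\norm{\sdiff{W}{\xi}}_{L^2}^2$, where $C$ depends on the uniform $L^\infty$ bound for $\sdiff{W}{\xi}$ from Lemma~\ref{lem:CP:W11nW1infinity}. Integrating in time and combining~\eqref{low_energy_est} with~\eqref{interpol} (which bounds $\integrall{0}{t}{\norm{\sdiff{W}{\xi}(\tau)}_{L^2}^2}{\tau}$ by $C\norm{W_0}_{H^1}^2$) yields the desired estimate on $\norm{W(t)}_{L^1}$.

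The main technical obstacle I foresee is the rigorous use of the non-smooth test function $\sgn(W)$. I plan to handle it by approximation: replace $\sgn$ by a smooth odd regularization $\sgn_\eta$, perform the computation above (the Kato-type bound for $\RieszFeller$ passes to the limit because of the positivity of $\Green$), and send $\eta\downarrow 0$; the regularity $W\in C([0,T];W^{1,1}\cap H^1)\cap L^\infty(0,T;W^{1,\infty})$ secured in Lemma~\ref{lem:CP:W11nW1infinity} gives enough control for the limit passage. Alternatively, one can argue directly from the mild formulation~\eqref{mildform:W:2}, using the $L^1$-contraction $\norm{\Green(t)\ast v}_{L^1}\leq\norm{v}_{L^1}$ of the Riesz-Feller semigroup together with an integration by parts that converts the linear transport contribution into the equation for $|W|$, as is implicit in the proof of Lemma~\ref{lem:CP:W11nW1infinity}.
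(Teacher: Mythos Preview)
Your strategy for~\eqref{low_energy_est} is exactly the paper's: reproduce~\eqref{energy:W} and~\eqref{energy:U}, combine them with weight~$\gamma_1$ into~\eqref{energy:WU}, absorb the right-hand side via~\eqref{interpol} with $\sigma=\alpha$, and replace the Sobolev-embedding smallness by $\norm{W}_{L^\infty}\leq\norm{\sdiff{W_0}{\xi}}_{L^1}\leq\delta_1$ from~\eqref{est:W:Linfinity}. No differences here.

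For~\eqref{est-L1} your key ideas---the sign of $\integral{\R}{\sgn(W)\,\RieszFeller W}{\xi}$, the sign of the linear transport term after integration by parts, and the quadratic bound on the Taylor remainder---coincide with the paper's. The one substantive point you are missing is how the paper makes the $\sgn$-computation rigorous. You propose to mollify $\sgn$ and pass to the limit using only the regularity $W\in C([0,T];W^{1,1}\cap H^1)\cap L^\infty(0,T;W^{1,\infty})$. But the paper's Kato step relies on Lemma~\ref{lemma:CI} (the pointwise inequality $\Sd'(W)\,\RieszFeller W\leq\RieszFeller\Sd(W)$ via the singular-integral representation) together with Proposition~\ref{prop:RieszFeller:estimate} (vanishing of $\integral{\R}{\RieszFeller\Sd(W)}{\xi}$), and both of these require $W(\cdot,t)\in C^2_b(\R)$. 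That regularity is \emph{not} available at $t=0$ from your assumptions; it comes from the smoothing effect $W\in C_b^\infty(\R\times(t_0,\infty))$ for $t_0>0$ recorded in Lemma~\ref{lem:CP:W11nW1infinity}. The paper therefore splits time: for $t\leq 1$ it uses the crude bound~\eqref{est:W:L1} (linear growth in $t$, which is harmless on a unit interval), and only for $t\geq 1$ does it run the $\sd(W)$-multiplier argument on $[t_0,t]$ with $t_0=1$, where $W$ is already smooth. Combining the two gives~\eqref{est-L1-W} uniformly in $t$.

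So your plan is correct in outline but you should incorporate this time-splitting (or otherwise justify the Kato inequality for $W$ that is merely $W^{1,\infty}$ in space, which is not covered by Lemma~\ref{lemma:CI}). Your alternative via the mild formulation would have to reproduce the sign of the transport term, and it is not clear how the integration by parts yielding $-\integral{\R}{f''(\profile)\profile'|W|}{\xi}\geq 0$ emerges from the Duhamel integral; the paper does not take that route.
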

%%%%%%%%%%%%%%%%%%%%%%%%%%%%%%%%%%%%%%%%%%%%

\begin{proof}
% We prove the statement on the level of (mild) solutions.
Following the proof of Lemma~\ref{lem:a_priori},
 we deduce again estimate~\eqref{energy:WU}, i.e.
\begin{multline*} %\label{energy:WU:recall}
 \frac12 \sdiff{}{t} (\|W\|_{L^2}^2 + \gamma_1 \|U\|_{L^2}^2) 
  - \frac12 \integral{\R}{ f''(\profile) \profile' W^2 }{\xi} \\
 + \frac34 \cos \bigPar{ \theta \tfrac{\pi}{2} } (\|W\|^2_{\dot{H}^{\alpha/2}} + \gamma_1 \|U\|^2_{\dot{H}^{\alpha/2}} )
  \le L(\norm{\sdiff{W}{\xi}}_{L^\infty})\ \norm{W}_{L^\infty} \|\partial_\xi W\|^2_{L^{2}}
\end{multline*}
 for some positive non-decreasing function~$L$.
Integrating this inequality with respect to time 
 and using \eqref{interpol}, the estimates~\eqref{est:W:Linfinity}--\eqref{est:U:Linfinity} as well as the smallness of $\norm{W_0}_{W^{1,1}}$,
 we arrive at \eqref{low_energy_est}.

Thus it remains to prove \eqref{est-L1}.
Due to Lemma~\ref{lem:CP:W11nW1infinity}, for all $t_0>0$,
 $W\in C_b^\infty(\R\times (t_0,\infty))$ and it is a classical solution of the first equation of \eqref{CP:W}.
Therefore we can adapt the $L^1$ energy method introduced by 
%Nishikawa \cite{} and 
Kawashima, Nishibata and Nishikawa~\cite{Kawashima+etal:2004}. 
For a non-negative function $\rho: \R\to\R$ satisfying $\rho\in\C_0^\infty(\R)$ and $\integral{\R}{\rho(x)}{x}=1$, 
  the convolution operator $\rd\ast$ with $\rd(x) = \delta^{-1} \rho(x/\delta)$ is a Friedrichs' mollifier. 
 We introduce the functions
  \[ \sd(x) := (\rd \ast \sgn)(x)  \XX{and} \Sd(x) := \integrall{0}{x}{ \sd(\xi) }{\xi} \,, \]
  in which the signature function~$\sgn(x)$ is defined by
  \[
    \sgn(x) := 
    \begin{cases}
    -1 & \text{for } x<0 \,, \\
      0 & \text{for } x=0 \,, \\
      1 & \text{for } x>0 \,.
    \end{cases}
  \]
 Note that the convergence of $\sd(x)\to\sgn(x)$ as $\delta\to 0$ is in the sense of 
  a weak~$\star$ convergence in $L^\infty(\R)$, respectively,
	a strong convergence in $L_{loc}^q(\R)$, $1\leq q < \infty$. 
 The function $\sd(x)$ satisfies $\sdd(x)=2\rd(x)\geq 0$ 
  and $\sd(0)=0$ by choosing $\rho$ to be an even function. 
 Moreover $\Sd(x)\to |x|$ converges strongly in $L^1(\R)$ as $\delta\to 0$. 
 
To estimate $\norm{W(t)}_{W^{1,1}}$,
 we recall that $\norm{U(t)}_{L^1} \leq \norm{U_0}_{L^1}$ for all $t\in [0,T]$,
 due to estimate~\eqref{est:U:L1} in Lemma~\ref{lem:CP:W11nW1infinity}.
Next we show that
\begin{equation}\label{est-L1-W}
\|W(t)\|_{L^1} \leq C \|W_0\|_{W^{1,1}} + C\|W_0\|_{H^1}^2
\end{equation}
for $t\in [0,T]$.
We will use estimate~\eqref{est:W:L1} for small times $t\leq 1$,
 and derive~\eqref{est-L1-W} for large times $t\geq 1$:
% Again we consider the evolution of primitive $W$ of the perturbation $U$ from time $t_0$, 
% where $W(\cdot,t_0)\in C^\infty_b(\R) \cap L^1(\R)$ 
%  and $\norm{W(\cdot,t_0)}_{L^1}\leq C (1+\norm{W_0}_{L^1})$.
% Indeed $W(\cdot,t)\in H^2(\R)$ due to Theorem~\ref{theorem:AS};
%  $W(\cdot,t)\in L^1(\R)$ for $t\in[0,t_0]$;
%  $U\in C_b^\infty(\R\times (t_0,\infty))$ for $t_0 >0$ due to $U=u-\profile$ and Theorem~\ref{thm:DGV};
%  as well as $U(\cdot, t)\in L^1(\R)$ for all $t\geq 0$.
We multiply the first equation of \eqref{CP:W} by $\sd(W)=(\rd\ast\sgn)(W)$ and obtain
  \begin{equation} \label{eq:U:primitive:L1}
    \partial_t \Sd(W) + \sd(W) \{h(\profile+U)-h(\profile)\} = \sd(W) \RieszFeller W \ ,
  \end{equation}
 where $h(v):=f(v)-\wavespeed v$ is a convex function.
 We integrate equation~\eqref{eq:U:primitive:L1} %with respect to $\dd[x]\dd[t]$ 
 over $\R \times [t_0,t]$ and derive
  \begin{multline}\label{eq:U:primitive:L1:approximate}
    \integrall{t_0}{t}{ \integral{\R}{  \partial_t \Sd(W) }{x}  }{\tau} 
    + \integrall{t_0}{t}{ \integral{\R}{  \sd(W) \{h(\profile+U)-h(\profile)\} }{x} }{t} %\\
    = \integrall{t_0}{t}{ \integral{\R}{  \sd(W) \RieszFeller W }{x} }{t}.
  \end{multline}
%%%%%%%%%%%%%%%%%%%%%%%%%%%%%%%%%%%%%%%%%%%%%%%%%%%%%%%%%%%%%%%%%%%%%%%%	
The first integral satisfies, due to Fubini's theorem and the strong convergence of $\Sd$ in $L^1$, 
\begin{multline}\label{St-est} 
    \integrall{t_0}{t}{ \integral{\R}{  \partial_t \Sd(W) }{x} }{\tau}
    = \integral{\R}{ \{ \Sd(W(x,t))-\Sd(W(x,t_0)) \} }{x} %\\
    \to \|W(t)\|_{L^1} - \|W(t_0)\|_{L^1} %\quad \text{as \ $\delta\to 0$.}
  \end{multline}
 as $\delta \to 0$.
%%%%%%%%%%%%%%%%%%%%%%%%%%%%%%%%%%%%%%%%%%%%%%%%%%%%%%%%%%%%%%%%%%%%%%%%
Next, we prove that the integral on the right-hand side of~\eqref{eq:U:primitive:L1:approximate} is non-positive,
 \begin{equation}\label{DW-est}
 \integrall{t_0}{t}{ \integral{\R}{ \sd(W) \RieszFeller[W] }{x} }{\tau}  \leq 0.
 \end{equation}
 Indeed, $\Sd\in C^2(\R)$ is a convex function with $\Sd'=\sd$ and $\Sd''=\sd'=2\rd\geq 0$.
 % $U\in C_b^\infty(\R\times (t_0,\infty))$.
 Moreover, under our assumptions,
  $W(\cdot,t)\in H^1(\R)$ for $t\geq 0$ and $W\in C_b^\infty(\R\times (t_0,\infty))$ for $t_0 >0$.  
 Thus, $\lim_{\xi\to\pm\infty} W(\xi,t) = 0$ and $S_\delta(W) \in C^2_b$ with  
  \[ s_\delta(W)\, \RieszFeller[W] = S_\delta'(W)\, \RieszFeller[W] \leq \RieszFeller[S_\delta(W)] \,, \]
  due to Lemma~\ref{lemma:CI}.
 Consequently, 
  \[ \integral{\R}{ s_\delta(W)\, \RieszFeller[W] }{x} \leq \integral{\R}{ \RieszFeller[S_\delta(W)] }{x} = 0\,, \]
 due to Proposition~\ref{prop:RieszFeller:estimate}.
%%%%%%%%%%%%%%%%%%%%%%%%%%%%%%%%%%%%%%%%%%%%%%%%%%%%%%%%%%%%%%%%%%%%%%%%%%
We estimate the second term on the left-hand side of~\eqref{eq:U:primitive:L1:approximate} as follows.
Using the fact that $|s_\delta (W)| \le 1$ and $h(\profile+U)-h(\profile) = h'(\profile)U + O(|U|^2)$, we have
 \begin{equation*}
   \integral{\R}{ \sd(W) \{h(\profile+U)-h(\profile)\} }{\xi} 
      = \integral{\R}{  \sd(W) h'(\profile)U }{\xi} + R
  \end{equation*}
  with $|R| \le L(\norm{U}_{L^\infty})\ \|U\|_{L^2}^2 /2$.
  Furthermore, we compute from the fact $U = \partial_\xi W$ that
   \begin{equation*}
   \integral{\R}{  \sd(W) h'(\profile)U }{\xi} 
      = - \integral{\R}{  \Sd(W) h''(\profile)  \profile' }{\xi} \geq 0,
  \end{equation*}
 since %the function $W(\cdot,t)\in C_b(\R)\cap H^2(\R)$ satisfies $\lim_{\xi\to\pm\infty} W(\xi,t) = 0$,
 the function $\Sd$ is non-negative with $\Sd(0)=0$,
 $h\in C^2(\R)$ is a convex function,
 and $\profile$ is a monotone decreasing traveling wave profile.
 % Next, we prove that the integral on the right-hand side of~\eqref{eq:U:primitive:L1:approximate} is non-positive,
 % \[ \inttOT \integral{\R}{ \sd(W) \, \RieszFeller W }{x} \dd[t] \leq 0 \,. \]
 % Indeed, $S_\delta\in C^2(\R)$ is a convex function with $S_\delta'=s_\delta$ and $\Sd''=\sd'=2\rd\geq 0$.
 % Moreover, under the assumptions of the theorem,
 %  $W(\cdot,t)\in H^2(\R)$ for $t\geq 0$ and $\sdiff{W}{\xi} = U\in C_b^\infty(\R\times (t_0,\infty))$ for $t_0 >0$.  
%% $W\in C_b^\infty(\R\times (t_0,\infty))$.
 % Thus, $\lim_{\xi\to\pm\infty} W(\xi) = 0$ and $S_\delta(W) \in C^\infty_b$ with
 % \[ \sd(W) \, \RieszFeller W = S_\delta'(W) \, \RieszFeller W \leq \RieszFeller[\Sd(W)] \,, \]
 % due to Lemma~\ref{lemma:CI}.
 % Consequently, 
 %  \[ \integral{\R}{ \sd(W) \, \RieszFeller W }{x} \leq \integral{\R}{ \RieszFeller[\Sd(W)] }{x} = 0\,, \]
 % due to Proposition~\ref{prop:RieszFeller:estimate}.
 Therefore, employing the previous estimates and taking the limit $\delta\to 0$ 
 in equation~\eqref{eq:U:primitive:L1:approximate} yields
 \begin{multline}\label{high_L1_W}
 \|W(t)\|_{L^1} 
  \leq \|W(t_0)\|_{L^1} + L(\norm{\sdiff{W_0}{\xi}}_{L^\infty} +2\norm{\profile}_{L^\infty})\ \integrall{t_0}{t}{ \|U(\tau)\|_{L^2}^2 }{\tau} \\
  \leq \|W(t_0)\|_{L^1} + C\|W_0\|_{H^1}^2 
 \end{multline}
 for $t \ge t_0 >0$ and some positive constant~$C$; 
 here we used~\eqref{low_energy_est} and~\eqref{interpol}.
% \begin{equation}\label{int-est-UW}
%\int_{0}^t \|U(\tau)\|_{L^2}^2 }{\tau} \leq C\|W_0\|_{H^1}^2
%\end{equation}
% proved in the proof of Lemma...
 The estimate \eqref{high_L1_W} is valid for an arbitrary positive constant $t_0$.
 Thus we can estimate from~\eqref{high_L1_W} and~\eqref{low_L1_W} that
 \begin{equation*}%\label{}
 \|W(t)\|_{L^1}  \leq \|W(1)\|_{L^1} +  C\|W_0\|_{H^1}^2 
  \leq \|W_0\|_{L^1} + C\|U_0\|_{L^1} +  C\|W_0\|_{H^1}^2 
 \end{equation*}
for $t \ge 1$.
Eventually, combining this estimate and \eqref{low_L1_W} again, 
we arrive at the desired estimate \eqref{est-L1-W}.
%  This estimate together with estimate~\eqref{est:U:L1} implies the statement of Lemma~\ref{lemma:W:normW11:estimate}.
\end{proof}

\begin{proof}[Proof of Theorem~\ref{theorem:CRW}]
% \noindent
% {\bf Proof of Theorem~\ref{theorem:CRW}.}
The existence of the global solution follows from Lemma~\ref{lem:CP:W11nW1infinity}
 and the a-priori estimates in Lemma~\ref{lem:a_priori_CR}.
We derive just the decay estimate \eqref{decay_est_W}.
To this end, we first introduce the following Nash inequality:
 \begin{equation} \label{ineq:Nash}
  \|v\|_{L^2}^{2(1+2\sigma)} \leq C_\sigma \|v\|_{L^1}^{4\sigma} \|v\|_{\dot{H}^\sigma}^2 
 \end{equation}
for $\sigma > 0$ and $v \in L^1(\R) \cap H^\sigma (\R)$,
where $C_\sigma$ is a positive constant which depends on $\sigma$.
%\begin{lemma}[Nash inequality]
% Suppose $s>0$ and $u \in L^1(\R) \cap H^s (\R)$.
% Then there exists a positive constant $C_s$ such that the inequality 
% \begin{equation} \label{ineq:Nash}
%  \norm{u}_{L^2}^{2(1+2s)} \leq C_s \norm{u}_{L^1}^{4s} \norm{u}_{\dot{H}^s}^2 
% \end{equation}
% holds.
%\end{lemma}
Following the proof of Lemma~\ref{lem:a_priori},
 we deduce again estimate~\eqref{energy:WU}.
Multiplying this inequality with $(1+\tau)^\beta$ for $\beta \in \mathbb{R}$ and integrating over $\tau\in [0,t]$, we obtain
\begin{align*}%\label{t-energy:WU}
 \mathcal{E}_{\beta}(t)^2
  & -\integrall{0}{t}{ (1+\tau)^\beta \integral{\R}{ f''(\profile) \profile' W^2 }{\xi} }{\tau}
    + \frac32 \cos \bigPar{ \theta \tfrac{\pi}{2} } \integrall{0}{t}{ \mathcal{D}_\beta(\tau)^2 }{\tau}  \\
  &\le \|W_0\|_{L^2}^2 + \gamma_1 \|U_0\|_{L^2}^2
    + \beta \integrall{0}{t}{ \mathcal{E}_{\beta-1}(\tau)^2  }{\tau} \\
  & \qquad + L(\norm{\sdiff{W_0}{\xi}}_{L^\infty} +2\norm{\profile}_{L^\infty})\ \integrall{0}{t}{ (1+\tau)^\beta \|W\|_{L^\infty} \|\partial_\xi W\|^2_{L^{2}}}{\tau}
%&  \sdiff{}{t} \mathcal{E}_{\beta}(t)^2 - \beta \mathcal{E}_{\beta-1}(t)^2 
%    -  (1+t)^\beta \integral{\R}{ f''(\profile) \profile' W^2 }{\xi}
%   + \frac32 \cos \bigPar{ \theta \tfrac{\pi}{2} } \mathcal{D}_\beta(t)^2   \\
%&   \le  C(1+t)^\beta \|W\|_{L^\infty} \|\partial_\xi W\|^2_{L^{2}}.
\end{align*}
where $\mathcal{E}_\beta(t)^2 := (1+t)^\beta (\|W(t)\|_{L^2}^2 + \gamma_1 \|U(t)\|_{L^2}^2)$, and
\begin{equation*}%\label{energy:WUU'}
 \mathcal{D}_\beta(t)^2 :=  (1+t)^\beta (\|W(t)\|^2_{\dot{H}^{\alpha/2}} + \gamma_1 \|U(t)\|^2_{\dot{H}^{\alpha/2}} ).
\end{equation*}
We compute via Nash's inequality~\eqref{ineq:Nash} with $\sigma=\alpha/2$ and Young's inequality that
\begin{align*}
 (1+t)^{\beta-1} \|v\|_{L^2}^2  
  & \leq C (1+t)^{\beta-1} \|v\|_{\dot{H}^{\alpha/2}}^{\frac{2}{1+\alpha}} \|v\|_{L^1}^{\frac{2\alpha}{1+\alpha}} \\
  & = C \{ (1+t)^{\beta} \norm{v}_{\dot{H}^{\alpha/2}}^2 \}^{\frac{1}{1+\alpha}}
  \{ (1+t)^{\beta-\frac{1+\alpha}{\alpha}} \|v\|_{L^1}^2 \}^{\frac{\alpha}{1+\alpha}}\\
   & \le \epsilon (1+t)^{\beta} \norm{v}_{\dot{H}^{\alpha/2}}^2 
  + C_\epsilon (1+t)^{\beta-\frac{1+\alpha}{\alpha}} \|v\|_{L^1}^2 \ ,
\end{align*}
for all $\epsilon>0$ and some positive constant $C_\epsilon$.
Thus we get 
$\mathcal{E}_{\beta-1}(t)^2 
\le \epsilon \mathcal{D}_{\beta}(t)^2 + C_\epsilon (1+t)^{\beta-\frac{1+\alpha}{\alpha}} (\|W\|_{L^1}^2 + \gamma_1 \|U\|_{L^1}^2) 
$.
Therefore, employing this estimate and \eqref{est-L1}, we obtain
\begin{align*}%\label{t-energy:WU}
 \mathcal{E}_{\beta}(t)^2 
 & -\integrall{0}{t}{ (1+\tau)^\beta \integral{\R}{ f''(\profile) \profile' W^2 }{\xi} }{\tau}
   + \Big\{ \frac32 \cos \bigPar{ \theta \tfrac{\pi}{2} } - \epsilon \beta \Big\}\integrall{0}{t}{ \mathcal{D}_\beta(\tau)^2 }{\tau}  \\[-2mm]
 & \le  \|W_0\|_{L^2}^2 + \gamma_1 \|U_0\|_{L^2}^2
   + \beta C_\epsilon \integrall{0}{t}{ \ (1+\tau)^{\beta-\frac{1+\alpha}{\alpha}} (\|W\|_{L^1}^2 + \gamma_1 \|U\|_{L^1}^2)  }{\tau} \\[-2mm]
 &\qquad + L(\norm{\sdiff{W_0}{\xi}}_{L^\infty} +2\norm{\profile}_{L^\infty})\ \integrall{0}{t}{ (1+\tau)^\beta \|W\|_{L^\infty} \|\partial_\xi W\|^2_{L^{2}}}{\tau} \\[-2mm]
&   \le  C\|W_0\|_{H^1}^2 
 + \beta C_\epsilon (\|W_0\|_{H^1}^2 + \|W_0\|_{W^{1,1}})^2 \integrall{0}{t}{ \ (1+\tau)^{\beta-\frac{1+\alpha}{\alpha}}   }{\tau} \\[-2mm]
&\qquad  
+ L(\norm{\sdiff{W_0}{\xi}}_{L^\infty} +2\norm{\profile}_{L^\infty})\ \integrall{0}{t}{ (1+\tau)^\beta \|W\|_{L^\infty} \|\partial_\xi W\|^2_{L^{2}}}{\tau}.
\end{align*}
For this inequality, we take $\beta$ and $\epsilon$ which satisfy
$$
\beta-\frac{1+\alpha}{\alpha} > 1, \qquad \frac32 \cos \bigPar{ \theta \tfrac{\pi}{2} } - \epsilon \beta > 0,
$$
obtaining
\begin{align*}%\label{t-energy:WU}
 \mathcal{E}_{\beta}(t)^2 &- \integrall{0}{t}{ (1+\tau)^\beta \integral{\R}{ f''(\profile) \profile' W^2 }{\xi} }{\tau} 
    + c \integrall{0}{t}{ \mathcal{D}_\beta(\tau)^2 }{\tau}  \\[-2mm]
 &\le  C(\|W_0\|_{H^1}^2 + \|W_0\|_{W^{1,1}})^2 \ (1+t)^{\beta-1/\alpha} \\
 & \qquad + L(\norm{\sdiff{W_0}{\xi}}_{L^\infty} +2\norm{\profile}_{L^\infty})\ \integrall{0}{t}{ (1+\tau)^\beta \|W\|_{L^\infty} \|U\|^2_{L^{2}}}{\tau} \ ,
\end{align*}
for some positive constant~$c$.
Finally, using \eqref{interpol}, the estimates~\eqref{est:W:Linfinity}--\eqref{est:U:Linfinity} and the smallness of $\norm{W_0}_{W^{1,1}}$, we arrive at
\begin{multline*}%\label{t-energy:WU}
 \mathcal{E}_{\beta}(t)^2  - \integrall{0}{t}{ (1+\tau)^\beta \integral{\R}{ f''(\profile) \profile' W^2 }{\xi} }{\tau}
   + c \integrall{0}{t}{ \mathcal{D}_\beta(\tau)^2 }{\tau} \\[-1mm]
 \le C(\|W_0\|_{H^1}^2 + \|W_0\|_{W^{1,1}})^2 (1+t)^{\beta-1/\alpha}  
 \le C E_1^2 \ (1+t)^{\beta-1/\alpha} 
\end{multline*}
and the desired estimate \eqref{decay_est_W}.
%\qed
\end{proof}

\appendix
% \section{Foreword to the appendices}
% The nonlocal operator~$\dDalpha$ is a pseudo-differential operator, 
% which also appears in probability theory / the theory of stochastic processes. 
% To access the theory, we either identify the operator~$\dDalpha$ with a Riesz-Feller fractional derivative, 
% or the operator~$\Levy=-\dDalpha$ with a L\'evy operator, 
% i.e. the infinitesimal generator of a L\'evy process 
% (to be precise the infinitesimal generator of a strictly $(\alpha+1)$-stable process). 
% The latter approach allows to access results 
% such as \emph{$\dDalpha = -\Levy$ generates a strongly continuous semigroup~$e^{-t\Levy}$} directly, 
% see also the book of Niels Jacob~\cite{Jacob:2001} 
% and the lecture notes of Grzegorz Karch on~\emph{Nonlinear evolution equations with anomalous diffusion}.
\section{Riesz-Feller operators} \label{app:RF}
To study the existence of traveling wave solutions with smooth profiles,
 we need the singular integral representation of Riesz-Feller operators~$\RieszFeller$.

\begin{proposition}[{\cite[Proposition 2.3]{Achleitner+Kuehn:2015}}] \label{prop:RieszFeller:extension}
If $1<\alpha<2$ and $|\theta|\leq \min\{\alpha,2-\alpha\}$,
 then for all $v\in\SchwartzTF(\R)$ and $x\in\R$ 
 \begin{multline} \label{eq:RieszFeller1b}
 \RieszFeller v(x) = c_1 \integrall{0}{\infty}{ \frac{v(x+\xi)-v(x)-v'(x)\,\xi}{\xi^{1+\alpha}} }{\xi}
  + c_2 \integrall{0}{\infty}{ \frac{v(x-\xi)-v(x)+v'(x)\,\xi}{\xi^{1+\alpha}} }{\xi} \,,
 \end{multline}
 for some constants $c_1, c_2 \geq 0$ with $c_1+c_2 >0$.
\end{proposition}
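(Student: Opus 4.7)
The strategy is to verify the identity on the Fourier side. Denote the right-hand side operator by $T v$ and show that its Fourier multiplier symbol coincides with $\RFsymbol$ for a specific choice of $c_1, c_2 \geq 0$.

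First I would check that for $v\in\SchwartzTF(\R)$ and $1<\alpha<2$ both singular integrals converge absolutely: Taylor's theorem gives $v(x\pm\xi) - v(x) \mp v'(x)\xi = O(\xi^2)$ near $\xi=0$, so the integrands are $O(\xi^{1-\alpha})$ (integrable since $\alpha < 2$), while for large $\xi$ the integrands are $O(\xi^{-\alpha})$ (integrable since $\alpha>1$). Fubini's theorem then yields $\Fourier[T v](k) = \Psi(k)\, \hat v(k)$ with
\[
\Psi(k) = c_1\integrall{0}{\infty}{\frac{e^{\ii k\xi}-1-\ii k\xi}{\xi^{1+\alpha}}}{\xi} + c_2 \integrall{0}{\infty}{\frac{e^{-\ii k\xi}-1+\ii k\xi}{\xi^{1+\alpha}}}{\xi}.
\]

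The next step is to evaluate the first integral for $k>0$ by rotating the contour from the positive real axis to the positive imaginary axis. Applying Cauchy's theorem in the first quadrant and substituting $\xi=\ii r/k$ (after showing the arc contributions vanish) reduces the integral to $k^\alpha \ii^{-\alpha}\integrall{0}{\infty}{(e^{-r}-1+r)r^{-1-\alpha}}{r}$. Two integrations by parts identify this last integral as $\Gamma(-\alpha)=\Gamma(2-\alpha)/[\alpha(\alpha-1)]>0$, giving
\[
\integrall{0}{\infty}{ \frac{e^{\ii k\xi}-1-\ii k\xi}{\xi^{1+\alpha}} }{\xi}  = k^{\alpha}\Gamma(-\alpha)\, e^{-\ii\alpha\pi/2}, \qquad k>0.
\]
The second integral is the complex conjugate, and the case $k<0$ follows by symmetry $k\mapsto -k$, producing
\[
\Psi(k) = \Gamma(-\alpha)\, |k|^{\alpha}\left[(c_1+c_2)\cos(\alpha\tfrac{\pi}{2}) - \ii \sgn(k)(c_1-c_2)\sin(\alpha\tfrac{\pi}{2})\right].
\]

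Matching $\Psi(k)$ with $\RFsymbol(k) = -|k|^{\alpha}[\cos(\theta\pi/2) + \ii\sgn(k)\sin(\theta\pi/2)]$ yields a $2\times 2$ linear system whose unique solution is
\[
c_1 = \frac{\sin((\theta-\alpha)\pi/2)}{\Gamma(-\alpha)\sin(\alpha\pi)}, \qquad c_2 = -\frac{\sin((\theta+\alpha)\pi/2)}{\Gamma(-\alpha)\sin(\alpha\pi)}.
\]
For $1<\alpha<2$ one has $\Gamma(-\alpha)>0$ and $\sin(\alpha\pi)<0$. The hypothesis $|\theta|\leq 2-\alpha$ places $(\theta-\alpha)\pi/2\in[-\pi,0]$ and $(\theta+\alpha)\pi/2\in[0,\pi]$, so the numerators have the correct signs to make $c_1,c_2\geq 0$; moreover $c_1+c_2 = -\cos(\theta\pi/2)/[\Gamma(-\alpha)\cos(\alpha\pi/2)] > 0$ since $\cos(\alpha\pi/2)<0$ and $\cos(\theta\pi/2)>0$ whenever $|\theta|<1$, which is guaranteed by $|\theta|\leq 2-\alpha<1$.

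The main technical obstacle is justifying the contour rotation: showing the integrals along the quarter-circle arcs of radii $r$ and $R$ vanish as $r\to 0$ and $R\to\infty$. This rests on the uniform bound $|e^{\ii z} - 1 - \ii z| \leq |z|^2/2$ in the closed first quadrant (valid by the integral form of Taylor's remainder applied along rays $z=\rho e^{\ii\phi}$, $\phi\in[0,\pi/2]$) together with the integrability conditions $1<\alpha<2$. Once this is in place, the rest of the argument consists of the explicit computations above.
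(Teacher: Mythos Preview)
The paper does not prove this proposition; it merely cites it from \cite{Achleitner+Kuehn:2015}. Your argument is therefore not comparable to anything in the present paper, but it is the standard route to the result and is essentially correct: compute the Fourier symbol of each one-sided singular integral by contour rotation, then solve a $2\times 2$ linear system for $c_1,c_2$ and check the signs. Your formulas for $c_1,c_2$ and the sign analysis under $|\theta|\le 2-\alpha$ are accurate.

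One small imprecision: the bound $|e^{\ii z}-1-\ii z|\le |z|^2/2$ in the closed first quadrant is exactly what you need for the \emph{small} arc (giving a contribution $O(r^{2-\alpha})\to 0$), but it is too weak for the \emph{large} arc, since it would yield $O(R^{2-\alpha})\to\infty$. For the arc at radius $R$ you should instead use the cruder estimate $|e^{\ii z}-1-\ii z|\le |e^{\ii z}|+1+|z|\le 2+|z|$ (valid because $|e^{\ii z}|=e^{-R\sin\phi}\le 1$ for $\phi\in[0,\pi/2]$), which gives a contribution $O(R^{1-\alpha})\to 0$ since $\alpha>1$. This is routine and does not affect the validity of your overall outline.
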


The singular integral representation~\eqref{eq:RieszFeller1b} for Riesz-Feller operators~$\RieszFeller$ is well-defined for $C^2_b$ functions
 such that $\RieszFeller C^2_b(\R)\subset C_b(\R)$.
\begin{proposition} \label{prop:RieszFeller:estimate}
The integral representation~\eqref{eq:RieszFeller1b}
 of $\RieszFeller$ with $1<\alpha<2$ and $|\theta|\leq \min\{\alpha,2-\alpha\}$
%  \begin{multline} \label{eq:RieszFeller1b}
%  \RieszFeller f(x) = c_1 \integrall{0}{\infty}{ \frac{v(x+\xi)-v(x)-v'(x)\,\xi}{\xi^{1+\alpha}} }{\xi}
%   + c_2 \integrall{0}{\infty}{ \frac{f(x-\xi)-f(x)+f'(x)\,\xi}{\xi^{1+\alpha}} }{\xi} \,,
%  \end{multline}
 is well-defined for functions $v\in C^2_b(\R)$ with
 \begin{equation} 
 \label{eq:estimate:RieszFeller}
  \sup_{x\in\R} |\RieszFeller v(x)|
      \leq \tfrac12 (c_1 +c_2) \|{v''}\|_{C_b(\R)} \frac{M^{2-\alpha}}{2-\alpha}
      + 2 (c_1 +c_2) \|v'\|_{C_b(\R)} \frac{M^{1-\alpha}}{\alpha-1} < \infty
 \end{equation}
 for some positive constant $M$
 and the positive constants $c_1$ and $c_2$ in Proposition~\ref{prop:RieszFeller:extension}.

Moreover, if $v\in C^2_b(\R)$ is a function such that the limits $\lim_{x\to\pm\infty} v(x)$ exist,
 then $\integral{\R}{ \RieszFeller v(x) }{x} = 0$.
\end{proposition}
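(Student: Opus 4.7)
The plan is to prove the two assertions of Proposition~\ref{prop:RieszFeller:estimate} separately, in both cases by direct $\xi$-integration. For the pointwise estimate~\eqref{eq:estimate:RieszFeller}, I would split each of the two singular integrals in~\eqref{eq:RieszFeller1b} at an arbitrary cut-off $M>0$ into a near-origin part on $(0,M)$ and a tail part on $(M,\infty)$, and then majorize each part in sup-norm. On $(0,M)$, Taylor's theorem with integral remainder gives
\[
|v(x\pm\xi)-v(x)\mp v'(x)\xi|
=\Big|\int_0^\xi\!\int_0^s v''(x\pm\tau)\,d\tau\,ds\Big|
\le \tfrac12 \|v''\|_{C_b(\R)}\,\xi^2,
\]
so that the near-origin contribution is at most $\tfrac12 \|v''\|_{C_b(\R)}\,M^{2-\alpha}/(2-\alpha)$, finite because $\alpha<2$. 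On $(M,\infty)$, the mean value theorem combined with the triangle inequality gives $|v(x\pm\xi)-v(x)\mp v'(x)\xi|\le 2\|v'\|_{C_b(\R)}\xi$, so that the tail contribution is at most $2\|v'\|_{C_b(\R)}\,M^{1-\alpha}/(\alpha-1)$, finite because $\alpha>1$. Weighting by the non-negative $c_1,c_2$ and summing yields~\eqref{eq:estimate:RieszFeller}.

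For the integral identity, I would first emphasize that when the limits $v_\pm:=\lim_{x\to\pm\infty}v(x)$ differ, $\RieszFeller v$ need not lie in $L^1(\R)$, so the statement must be read in the symmetric sense $\lim_{R\to\infty}\int_{-R}^R \RieszFeller v(x)\,dx=0$. The bound from part one makes the integrand of the double integral absolutely integrable on $[-R,R]\times(0,\infty)$, so Fubini's theorem converts the first contribution into
\[
\int_{-R}^R\int_0^\infty \frac{v(x+\xi)-v(x)-v'(x)\xi}{\xi^{1+\alpha}}\,d\xi\,dx
=\int_0^\infty \frac{I(\xi,R)}{\xi^{1+\alpha}}\,d\xi,
\]
where $I(\xi,R):=\int_{-R}^R[v(x+\xi)-v(x)-v'(x)\xi]\,dx$. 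A change of variables $y=x+\xi$ in the shifted term, combined with the identity $\int_{-R}^R v'(x)\,dx=v(R)-v(-R)$ and a final substitution in the two remaining boundary integrals, reduces $I(\xi,R)$ to
\[
I(\xi,R)=\int_0^\xi \big\{[v(R+s)-v(R)]-[v(-R+s)-v(-R)]\big\}\,ds.
\]

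This form immediately yields pointwise convergence $I(\xi,R)\to 0$ as $R\to\infty$ for each fixed $\xi>0$, since both $v(\pm R+s)-v(\pm R)\to 0$ by the existence of the limits $v_\pm$. To exchange the limit with the $\xi$-integral, I would exploit the two competing bounds $|I(\xi,R)|\le \|v'\|_{C_b(\R)}\xi^2$ (from the mean value theorem inside the $s$-integral) and $|I(\xi,R)|\le 4\|v\|_{C_b(\R)}\xi$ (from boundedness of $v$), which jointly dominate $|I(\xi,R)|/\xi^{1+\alpha}$ by the integrable function $\min(\|v'\|_{C_b(\R)}\xi^{1-\alpha},\,4\|v\|_{C_b(\R)}\xi^{-\alpha})$: the first bound is integrable near zero because $\alpha<2$, the second near infinity because $\alpha>1$. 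Dominated convergence then gives $\int_0^\infty I(\xi,R)/\xi^{1+\alpha}\,d\xi \to 0$, and the same argument with the reflection $\xi\mapsto-\xi$ handles the $c_2$-contribution.

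The main obstacle is not conceptual but organizational: producing the two competing $\xi$-bounds needed for dominated convergence and laying out the change of variables in $I(\xi,R)$ so that the cancellation leading to its telescoped form is transparent. A secondary subtlety is to phrase the conclusion correctly in the improper-integral sense, since $\RieszFeller v$ is generally not globally $L^1$ when $v_+\ne v_-$, which is precisely the situation relevant to traveling-wave profiles in the main text.
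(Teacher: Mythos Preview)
Your proof of the pointwise bound~\eqref{eq:estimate:RieszFeller} is exactly the direct estimate the paper alludes to (and defers to \cite{Achleitner+Kuehn:2015}): split at an arbitrary $M>0$, use Taylor with integral remainder on $(0,M)$, and the mean value theorem on $(M,\infty)$.

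For the integral identity your argument is correct but organised differently from the paper's. You integrate over $x\in[-R,R]$ first via Fubini, telescope the inner $x$-integral to the boundary expression $I(\xi,R)=\int_0^\xi\{[v(R+s)-v(R)]-[v(-R+s)-v(-R)]\}\,ds$, and then let $R\to\infty$ under the $\xi$-integral by dominated convergence with the two competing bounds $\|v'\|_{C_b}\xi^2$ and $4\|v\|_{C_b}\xi$. The paper instead rewrites each singular integral as a total $x$-derivative,
\[
\int_0^\infty \frac{v(x+\xi)-v(x)-v'(x)\xi}{\xi^{1+\alpha}}\,d\xi
 \;=\; \partial_x \int_0^\infty \frac{1}{\xi^\alpha}\int_0^1 \big[v(x+\theta\xi)-v(x)\big]\,d\theta\,d\xi,
\]
so that $\int_\R \RieszFeller v\,dx$ reduces to evaluating this primitive at $x=\pm\infty$; the existence of the limits $v_\pm$ forces the primitive to vanish there, again by dominated convergence in $(\theta,\xi)$. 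The two routes are essentially dual---yours carries out the $x$-integration explicitly over a finite window, the paper's recognises the integrand as an exact derivative and reads off boundary terms. The paper's version is a bit more compact; yours has the merit of making explicit that the conclusion is an improper (symmetric) integral, a point the paper leaves implicit.
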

\begin{proof}
The first statement follows by direct estimates on the extension of Riesz-Feller operators in~\eqref{eq:RieszFeller1b},
 see~\cite[Proposition 2.4]{Achleitner+Kuehn:2015}.
To prove the second statement,
 we consider the two summands in~\eqref{eq:RieszFeller1b} separately,
 starting with $\integrall{0}{\infty}{ \frac{v(x+\xi)-v(x)-v'(x)\xi}{\xi^{1+\alpha}} }{\xi}$ for any $v\in C^2_b(\R)$.
Like before, we rewrite the integral
 \begin{align*}
   \integrall{0}{\infty}{ \frac{v(x+\xi)-v(x)-v'(x)\xi}{\xi^{1+\alpha}} }{\xi} 
     &= \integrall{0}{\infty}{ \frac1{\xi^{1+\alpha}} \bigg[ \integrall{0}{1}{v'(x+\theta\xi)\, \xi}{\theta} -v'(x)\xi \bigg] }{\xi} \\
%      &= \integrall{0}{\infty}{ \frac{\xi}{\xi^{1+\alpha}} \underbrace{\bigg[ \integrall{0}{1}{f'(x+\theta\xi) -f'(x)}{\theta} \bigg]}_{\text{bounded by $2 \norm{f'}_{C_b}$}} }{\xi} 
     &= \integrall{0}{\infty}{ \frac1{\xi^{\alpha}} \integrall{0}{1}{\big[ v'(x+\theta\xi) -v'(x) \big]}{\theta} }{\xi} \\
     &= \integrall{0}{\infty}{ \frac1{\xi^{\alpha}} \sdiff{}{x} \integrall{0}{1}{\big[ v(x+\theta\xi) -v(x) \big]}{\theta} }{\xi} \\ 
     &= \sdiff{}{x} \integrall{0}{\infty}{ \frac1{\xi^{\alpha}} \integrall{0}{1}{\big[ v(x+\theta\xi) -v(x) \big]}{\theta} }{\xi} \,,
 \end{align*}
 where exchanging integration and taking derivatives is possible,
 since in each step the integrands are absolutely integrable uniformly with respect to~$x$.
Moreover, 
 \begin{multline*}
  \integral{\R}{ \integrall{0}{\infty}{ \frac{v(x+\xi)-v(x)-v'(x)\xi}{\xi^{1+\alpha}} }{\xi} }{x}
   = \integral{\R}{ \sdiff{}{x} \integrall{0}{\infty}{ \frac{1}{\xi^{\alpha}} \integrall{0}{1}{\big[ v(x+\theta\xi) -v(x) \big]}{\theta} }{\xi} }{x}
 \end{multline*}
 and the primitive satisfies 
 \begin{multline*} 
 \lim_{x\to\pm\infty} \integrall{0}{\infty}{ \frac1{\xi^{\alpha}} \integrall{0}{1}{\big[ v(x+\theta\xi) -v(x) \big]}{\theta} }{\xi} \\
   =  \integrall{0}{\infty}{ \frac1{\xi^{\alpha}} \integrall{0}{1}{\lim_{x\to\pm\infty}\big[ v(x+\theta\xi) -v(x) \big]}{\theta} }{\xi} 
   = 0 \,,
 \end{multline*}
 where exchanging integration and taking limits is possible,
 since in each step the integrands are absolutely integrable 
 and $\lim_{x\to\pm\infty}\big[ v(x+\theta\xi) -v(x) \big]=0$ due to the assumptions on~$v$.
\end{proof}

Using the singular integral representation of $\RieszFeller$ and \cite[Lemma~1]{Droniou+Imbert:2006},
 we deduce the following result:
% \begin{theorem}
%   If $0<\alpha<1$, then for all $u\in C^2_0(\R)$ and $x\in\R$
%   \begin{equation} \label{eq:dDalpha}
%     \dDalpha u (x) = \sin \big( \fract{\alpha\pi}{2} \big) \intRm \frac{u(x+y)-u(x)-yu'(x)}{\abs{y}^{2+\alpha}} \dd[y]
%   \end{equation}
% \end{theorem}
\begin{lemma}%\cite[Lemma~1]{Droniou+Imbert:2006}
 \label{lemma:CI} %convex inequality
  Let $1<\alpha<2$, $u\in C^2_b(\R)$ and $\eta\in C^2(\R)$ be a convex function.
  Then $\eta'(u) ( \RieszFeller u ) \leq \RieszFeller \eta(u)$.
\end{lemma}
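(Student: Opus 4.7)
The plan is to reduce the inequality to the pointwise convexity estimate $\eta(b)\geq\eta(a)+\eta'(a)(b-a)$ by means of the singular integral representation from Proposition~\ref{prop:RieszFeller:extension}. Since $u\in C^2_b(\R)$ and $\eta\in C^2(\R)$, one checks that $\eta(u)\in C^2_b(\R)$ as well, so that by Proposition~\ref{prop:RieszFeller:estimate} both $\RieszFeller u(x)$ and $\RieszFeller \eta(u)(x)$ are well-defined via \eqref{eq:RieszFeller1b}.

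First I would write, for fixed $x\in\R$,
\begin{align*}
 \RieszFeller \eta(u)(x) &= c_1 \integrall{0}{\infty}{ \frac{\eta(u(x+\xi))-\eta(u(x))-(\eta\circ u)'(x)\,\xi}{\xi^{1+\alpha}} }{\xi} \\
 &\qquad + c_2 \integrall{0}{\infty}{ \frac{\eta(u(x-\xi))-\eta(u(x))+(\eta\circ u)'(x)\,\xi}{\xi^{1+\alpha}} }{\xi},
\end{align*}
and multiply the analogous representation of $\RieszFeller u(x)$ by $\eta'(u(x))$. Using $(\eta\circ u)'(x) = \eta'(u(x))\,u'(x)$, the linear-in-$\xi$ terms of the two expressions match exactly, so forming the difference $\RieszFeller\eta(u)(x) - \eta'(u(x))\RieszFeller u(x)$ yields
\begin{align*}
 &c_1 \integrall{0}{\infty}{ \frac{\bigl[\eta(u(x+\xi))-\eta(u(x))\bigr]-\eta'(u(x))\bigl[u(x+\xi)-u(x)\bigr]}{\xi^{1+\alpha}} }{\xi} \\
 &\qquad + c_2 \integrall{0}{\infty}{ \frac{\bigl[\eta(u(x-\xi))-\eta(u(x))\bigr]-\eta'(u(x))\bigl[u(x-\xi)-u(x)\bigr]}{\xi^{1+\alpha}} }{\xi}.
\end{align*}

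Next, I would invoke the standard convexity inequality $\eta(b)-\eta(a) \geq \eta'(a)(b-a)$ with $a=u(x)$ and $b=u(x\pm\xi)$: each numerator above is nonnegative. Since $c_1,c_2\geq 0$ and $\xi^{1+\alpha}>0$, the whole expression is nonnegative, yielding the claim $\eta'(u)\RieszFeller u\leq \RieszFeller\eta(u)$ pointwise.

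The only real technical point is justifying that the two singular integrals may be subtracted termwise; this is immediate because, by the proof of Proposition~\ref{prop:RieszFeller:estimate} (which applies to both $u$ and $\eta(u)$, each being $C^2_b$), both integrals converge absolutely after one splits the $\xi$-integral into a neighborhood of zero (where the second-order Taylor remainder gives the bound $\tfrac12\|v''\|_\infty\xi^{2}$) and its complement (where $v$ and $v'$ are bounded). No further delicate analysis is required, so no step is a substantial obstacle; the lemma is essentially an immediate consequence of Proposition~\ref{prop:RieszFeller:extension} and convexity of $\eta$.
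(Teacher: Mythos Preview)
Your proof is correct and follows essentially the same route as the paper: use the singular integral representation~\eqref{eq:RieszFeller1b}, observe that the first-order correction terms match via $(\eta\circ u)'=\eta'(u)\,u'$, and reduce to the pointwise convexity inequality $\eta(b)-\eta(a)\geq\eta'(a)(b-a)$. The paper's version is just a bit terser, establishing the integrand-level inequality directly rather than subtracting the full integrals, but the argument is the same.
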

\begin{proof}
Since $\eta$ is convex, we have $\eta'(a) (b-a) \leq \eta(b)-\eta(a)$. Hence,
\[ \eta'(u(x)) (u(x+z)-u(x)) \leq \eta(u(x+z)) - \eta(u(x)) \]
and $\eta'(u(x)) (u(x+z) -u(x) -u'(x)\cdot z) \leq \eta(u(x+z)) - \eta(u(x)) -(\eta(u))'(x)\cdot z$. 
The conclusion follows from these inequalities and Equation~\eqref{eq:RieszFeller1b}.
\end{proof}
% \begin{corollary}\cite[Corollary 1.26]{Karch:2009} \label{cor:II} % Integral inequality
% Let $\eta\in C^2(\R)$ be a convex function. Assume $\eta(u)\in\operatorname{domain}(\RieszFeller)$ and
% $\RieszFeller[ \eta(u)]\in L^1(\R)$. Then
% \[ \integral{\R}{ \eta'(u) \RieszFeller u }{\xi} \leq \integral{\R}{ \RieszFeller[ \eta(u)] }{\xi} =0 . \]
% \end{corollary}
% \begin{proof}
% The inequality follows from Lemma~\ref{lemma:CI}. Thus we compute the second integral.
% Denoting $v(x)=\eta(u(x))$ and using properties of the (inverse) Fourier transform, 
% we obtain
% \begin{align*} 
% \integral{\R}{ \RieszFeller v }{\xi} 
% &= \, \big\langle 1 \,,\, \RieszFeller v \big\rangle_{\mathcal{S'}(\R),\mathcal{S}(\R)} \\
% &= \, \big\langle \Fourier^{-1} 1 \,,\, \Fourier (\RieszFeller v) \big\rangle_{\mathcal{S'}(\R),\mathcal{S}(\R)} \\
% &= \, \big\langle \delta \,,\, -\RFsymbol \Fourier (v) \big\rangle_{\mathcal{S'}(\R),\mathcal{S}(\R)} \\
% &= -\RFsymbol(0) \Fourier(v)(0) = 0 \,.
% \end{align*}
% \end{proof}

%%%%%%%%%%%%%%%%%%%%%%%%%%%%%%%%%%%%%%%%%%%%%%%%%%%%%%%%%%%%%%%%%%%%%%%%%%%%%%%%%%%%%%%%%%% 

\medskip \noindent
\textbf{Acknowledgements:}
The first author was partially supported by Austrian Science Fund (FWF) under grant P28661 and the FWF-funded SFB \# F65.

\def\cprime{$'$} \def\cprime{$'$} \def\cprime{$'$} \def\cprime{$'$}
  \def\cprime{$'$} \def\cprime{$'$} \def\cprime{$'$}

\end{document}